\documentclass[11pt]{article}

\usepackage{arxiv}

\usepackage[utf8]{inputenc} 
\usepackage[T1]{fontenc}    
\usepackage{hyperref}       
\usepackage{url}            
\usepackage{booktabs}       
\usepackage{amsfonts}       
\usepackage{nicefrac}       
\usepackage{microtype}      
\usepackage[skins]{tcolorbox}        
\usepackage{graphicx}
\usepackage{float}
\usepackage{mathtools}
\usepackage{tabularx}
\usepackage[table]{xcolor}

\usepackage{amssymb, amsmath, latexsym, bm}
\usepackage{nicefrac}
\usepackage{hhline}
\usepackage{multirow}
\usepackage{amsmath}
\usepackage{amsthm}
\usepackage{pifont}
\usepackage[customcolors]{hf-tikz}

\usepackage[colorinlistoftodos,bordercolor=orange,backgroundcolor=orange!20,linecolor=orange,textsize=scriptsize]{todonotes}
 \usepackage{algorithm}
 \usepackage{algpseudocode}
\usepackage{caption}
\usepackage{subcaption}
\usepackage{enumitem}
\usepackage{natbib}
\usepackage{sidecap}

\newcommand{\argmin}{\mathop{\arg\!\min}}
\newcommand{\argmax}{\mathop{\arg\!\max}}

\newcommand{\eqdef}{\overset{\text{def}}{=}}

\newcommand{\prox}{\mathrm{prox}}
  
\newcommand{\cA}{{\cal A}}
\newcommand{\cB}{{\cal B}}

\newcommand{\cO}{{\cal O}}

\newcommand{\cT}{{\cal T}}

\def\R{\mathbb{R}}

\def\R{\mathbb R}

\def\la{\langle}
\def\ra{\rangle}

\def\xs{x^{\star}}
\def\ys{y^{\star}}
\def\zs{z^{\star}}

\newtheorem{lemma}{Lemma}
\newtheorem{theorem}{Theorem}
\newtheorem{definition}{Definition}

\newtheorem{corollary}{Corollary}
\newtheorem{remark}{Remark}

\usepackage[flushleft]{threeparttable} 

\usepackage{caption}
\usepackage{multirow}
\usepackage{colortbl}
\definecolor{bgcolor}{rgb}{0.8,1,1}
\definecolor{bgcolor2}{rgb}{0.8,1,0.8}
\definecolor{niceblue}{rgb}{0.0,0.19,0.56}

\definecolor{mydarkred}{RGB}{192,20,20}
\definecolor{mydarkgreen}{RGB}{20,145,20}
\definecolor{mydarkblue}{RGB}{20,20,192}
\definecolor{mydarkpurple}{RGB}{110,20,180}
\newcommand{\myred}{\color{mydarkred}}
\newcommand{\mygreen}{\color{mydarkgreen}}
\newcommand{\myblue}{\color{niceblue}}
\newcommand{\mypuprle}{\color{mydarkpurple}}

\newcommand{\hstar}{{\myred h^{*}}}

\usepackage{hyperref}
\hypersetup{colorlinks,linkcolor={niceblue},citecolor={niceblue},urlcolor={niceblue}}

\usepackage{pifont}
\definecolor{PineGreen}{RGB}{0,110,51}
\definecolor{BrickRed}{RGB}{143,20,2}
\newcommand{\xmark}{{\color{BrickRed}\ding{55}}}%
\definecolor{linen}{HTML}{FAF0E6}

\usepackage{xspace}
\newcommand{\algname}[1]{{\color{BrickRed}\small\sf#1}\xspace}

\title{A Unified Primal--Dual Recipe for Accelerating Three-Operator Splitting Methods}

\author{ Abdurakhmon Sadiev \\
    KAUST, Saudi Arabia\\
    \texttt{abdurakhmon.sadiev@kaust.edu.sa}
    \And
    Laurent Condat \\
    KAUST, Saudi Arabia\\
    \texttt{https://lcondat.github.io/}
    \And
    Peter Richt\'arik \\
    KAUST, Saudi Arabia\\
    \texttt{peter.richtarik@kaust.edu.sa}
}

\begin{document}
\maketitle

\begin{abstract}
    Composite optimization problems, formulated as the minimization of three functions, are ubiquitous in large-scale machine learning and signal processing. While state-of-the-art splitting methods such as Condat--V\~{u} (\algname{CV}) \citep{condat2013primal,vu2013splitting}, Primal--Dual Davis--Yin (\algname{PDDY}) \citep{salim2022dualize}, and Primal--Dual Twice Reflected (\algname{PDTR}) \citep{malitsky2026first} are highly versatile, they inherently exhibit non-accelerated convergence rates. Existing accelerated primal--dual splitting results either focus on special structures like one of the functions being zero, or linearly constrained problems, or smooth regimes, or directly use Nesterov-type momentum. Our contribution is a unified Bregman primal--dual framework that yields four variants and a common Lyapunov analysis. By applying the Chambolle--Pock algorithm \citep{chambolle2011first} to primal--dual reformulations, we systematically derive four novel accelerated algorithms: Accelerated Condat--V\~{u} (\algname{ACV-I} and \algname{ACV-II}) and Accelerated Primal--Dual Twice Reflected (\algname{APDTR-I} and \algname{APDTR-II}). Through a simplified Lyapunov-based analysis, we establish iteration complexities for both smooth and nonsmooth cases, successfully removing the restrictive assumptions restrictions required by prior works. 
\end{abstract}

\section{Introduction}

Accelerating the convergence of algorithms has historically been a central focus in optimization theory. The standard Gradient Descent (\algname{GD}) method converges at an $\mathcal{O}(1/k)$ rate for smooth convex functions, and at a linear $\mathcal{O}\big(e^{-k/\kappa}\big)$ rate for smooth strongly convex functions, where $k$ is the iteration index and $\kappa$ is the condition number of the objective function. By contrast, Nesterov's accelerated gradient descent (\algname{AGD}) achieves a strictly superior, and indeed optimal, $\mathcal{O}(1/k^2)$ rate for smooth convex functions \citep{nesterov1983method}, and an optimal $\mathcal{O}\big(e^{-k/\sqrt{\kappa}}\big)$ rate for smooth strongly convex functions \citep{nesterov2013introductory}. Driven by the growing demands of machine learning and large-scale data processing, understanding the fundamental nature of this acceleration and extending it to complex optimization problems has become a primary research objective in the field.

A highly versatile framework for tackling such large-scale challenges is composite optimization, specifically formulated as the minimization of three convex functions involving a linear operator:
\begin{equation}
    \min_{x} {\myblue f}(x) +{\mygreen g}(x) + {\myred h}(\mathsf{K}x)\label{eqp1}
\end{equation}
(see Section \ref{secsetup} for a formal definition). To solve this class of problems, proximal splitting methods are widely utilized. These methods operate on the principle of iteratively evaluating the gradients, proximal operators, and linear operators individually \citep{par14, condat2023proximal}. A foundational method within this family is the Forward--Backward splitting method (or proximal gradient descent), which alternates between gradient (forward) and proximal (backward) steps. For more complex composite structures, algorithms such as Douglas--Rachford \citep{douglas1956numerical,lions1979splitting,eckstein1992douglas,combettes2004solving,svaiter2011weak} and the Chambolle--Pock algorithm (\algname{CP})  \citep{chambolle2011first} are frequently employed.

For problems involving the sum of three functions, state-of-the-art splitting methods include the Condat--V\~{u} (\algname{CV}), 
\citep{condat2013primal, vu2013splitting}, Primal--Dual Three-Operator Splitting (\algname{PD3O}) \citep{yan2018new}, Primal--Dual Davis--Yin (\algname{PDDY}) \citep{salim2022dualize}, and Primal--Dual Twice Reflected (\algname{PDTR})  \citep{malitsky2026first} methods. However, a significant limitation of these standard methods is that they inherently exhibit non-accelerated convergence rates, except in isolated special cases. While recent efforts have attempted to bridge this gap, achieving optimal accelerated rates typically relies on restrictive structural assumptions (such as requiring one of the components to be zero), which prevents solving Problem \eqref{eqp1} in full generality.

In this paper, we bridge this theoretical gap by introducing a unified recipe for accelerating gradient-based splitting methods via a primal--dual perspective. By applying \algname{CP}  to primal--dual reformulations, we systematically derive novel accelerated variants of well-known splitting algorithms. Crucially, these proposed methods achieve accelerated convergence guarantees 
without relying on restrictive functional assumptions.

\section{Related Work}

\textbf{The Nature of Acceleration.} In parallel with the algorithmic extensions of Accelerated Gradient Descent (\algname{AGD}) to various scenarios, there has been profound interest in understanding the fundamental nature of acceleration. Several distinct theoretical frameworks have been proposed to demystify Nesterov's momentum. \citet{su2016differential} studied the continuous-time limit dynamics of \algname{AGD}, modeling it as a second-order ordinary differential equation. From an optimization viewpoint, \citet{allen2017linear} interpreted accelerated gradient descent as a linear coupling of gradient descent and mirror descent. \citet{ahn2022understanding} investigated acceleration from the perspective of the proximal point method. Furthermore, \citet{wang2018acceleration, wang2024no} explained acceleration through the lens of no-regret dynamics in Fenchel games. 

Since Nesterov's original proposal \citep{nesterov1983method}, various formulations of \algname{AGD} have been developed to handle different problem classes, including extensions for strongly convex functions and composite proximal setups \citep{nesterov2013gradient, nesterov2018lectures}. While standard \algname{AGD} guarantees the optimal $\mathcal{O}(1/k^2)$ rate in terms of the objective function, it inherently sacrifices the monotonic decrease of both the objective function and the gradient norm. This lack of monotonicity often accompanies acceleration, arising from the underlying rotational dynamics in the primal--dual space. To refine the theoretical limits of these methods, optimized formulations have been developed. Notably, the Optimized Gradient Method (\algname{OGM}) proposed by \citet{kim2016optimized} provides an approach that achieves the exact optimal worst-case convergence bound, improving upon standard \algname{AGD}.

Finally, viewing acceleration through a primal--dual lens has proven highly fruitful. The Chambolle--Pock algorithm (\algname{CP})  \citep{chambolle2011first} inherently exhibits acceleration under strong convexity assumptions in both the primal and dual domains. Building on this, researchers have shown that by incorporating Bregman divergences or modifying the dual update steps within the \algname{CP} framework, it is possible to directly recover accelerated gradient methods \citep{lan2018optimal, lan2018random}. This demonstrates that primal--dual splitting is not merely a structural tool, but a mechanism that fundamentally captures the mechanics of acceleration.

\textbf{Splitting Methods.} For problems involving the sum of three operators, several splitting frameworks have been established, relying on access to the proximal operators of two nonsmooth functions and the gradient of one smooth function. Prominent examples include \algname{CV} (Forms I and II) \citep{condat2013primal, vu2013splitting}, \algname{PD3O} \citep{yan2018new}, \algname{PDDY} \citep{salim2022dualize}, and \algname{PDTR} \citep{malitsky2026first}. While highly flexible, these base algorithms do not inherently exhibit accelerated convergence rates.

\textbf{Accelerated Primal--Dual Methods.} The observation that \algname{CP} achieves an accelerated rate with a dependence on $\sqrt{\mu_1\mu_2}$ 
---when one function is $\mu_1$-strongly convex and the conjugate of the other is $\mu_2$-strongly convex---has inspired a distinct line of primal--dual acceleration research. \citet{lan2018optimal, lan2018random} demonstrated that replacing the quadratic term with the Bregman divergence of the dual function in the proximal update can transform \algname{CP} into the Accelerated Gradient Descent or Accelerated Gradient Extrapolation method. Similar principles were applied to the Mirror Prox method by \citet{cohen2020relative}. Recently, \citet{driggs2024practical} proposed the Accelerated Condat--V\~{u} (\algname{ACV}) algorithm. Their approach directly incorporates Nesterov's momentum into \algname{CV}. In contrast, our proposed methodology derives acceleration inherently from the primal--dual perspective, utilizing an extrapolation parameter of $1$ and relying on a simplified Lyapunov-based analysis. Furthermore, while \citet{condat2026nesterov} recently developed the 
Accelerated Proximal Alternating Predictor--Corrector algorithm (\algname{APAPC}), their theoretical guarantees remain restricted to cases where 
${\mygreen g} = (\mu_{\mygreen g}/2)\| \cdot \|^2$. Our framework successfully removes these limiting assumptions to accommodate general composite optimization problems.

\subsection{Contributions}

Our main contributions can be summarized as follows (see also Table~\ref{tab:summary}):
\begin{itemize}
    \item \textbf{A Unified Recipe for Acceleration.} We introduce a straightforward framework for accelerating gradient-based methods by utilizing a primal--dual reformulation of the original optimization problem and applying \algname{CP}. As an initial demonstration, we use this recipe to systematically derive Accelerated Proximal Gradient Descent (\algname{APGD}) and Accelerated Proximal Gradient Extrapolation (\algname{APGE}) alongside a unified Lyapunov-based analysis.
    \item \textbf{Four New Accelerated Primal--Dual Methods.} By extending our recipe to three-operator composite problems, we derive the Accelerated Condat--V\~{u} algorithms (\algname{ACV-I} and \algname{ACV-II}). We additionally introduce the Accelerated Primal--Dual Twice Reflected algorithms (\algname{APDTR-I} and \algname{APDTR-II}). 
    \item \textbf{Accelerated Convergence Guarantees.} Through rigorous Lyapunov-based analysis, we establish accelerated convergence guarantees for the \algname{ACV-I}, \algname{ACV-II}, \algname{APDTR-I}, and \algname{APDTR-II} algorithms in two different settings: when ${\myred h}$ is smooth and when it is nonsmooth. Notably, our derived complexity bounds achieve optimal rates under the smoothness assumption on the function ${\myred h}$ and accelerated rates otherwise, entirely removing the restrictive structural assumptions (e.g., ${\mygreen g} = 0$ or ${\myblue f}= 0$) required by existing literature.
\end{itemize}

\begin{table}[t]
    \centering
    \caption{Summary of theoretical iteration complexities of primal--dual methods for solving problem \eqref{eq:main_opt_prob} to achieve $\Psi_k \leq \varepsilon \Psi_0$, where $\Psi_k$ is a Lyapunov function. Parameters $\mu$ and $L$ denote the strong convexity and smoothness constants of the respective functions, while $\mathsf{K}$ is the linear operator with operator norm $\|\mathsf{K}\|$. The smallest eigenvalue and smallest positive eigenvalue of $\mathsf{K}\mathsf{K}^*$ are denoted by $\lambda_{\min}(\mathsf{K}\mathsf{K}^*)$ and $\lambda^+_{\min}(\mathsf{K}\mathsf{K}^*)$, respectively. Our proposed algorithms (bottom row) establish generalized convergence guarantees for both smooth and nonsmooth ${\myred h}$ without requiring ${\mygreen g} = 0$ or ${\myblue f} = 0$, under the additional nonsmooth-regime assumptions stated in Theorem~\ref{th:convergence_ACV_and_APDTR_non_smooth}.\vspace{2mm} }
    \label{tab:summary}
 \begin{threeparttable}
\resizebox{\textwidth}{!}{%
\begin{tabular}{c|c|c|c}
\bf Algorithm &\bf Restrictions & \bf ${\myred h}$ is smooth & \bf ${\myred h}$ is nonsmooth \\ 
\hline \hline 
\begin{tabular}{c}  \algname{PDDY} \\  \citep{salim2022dualize}\\\citep{condat2023randprox} \end{tabular}
&  
& $ \frac{L_{\myblue f}}{\mu_{\mygreen g}}+ \frac{\|\mathsf{K}\|}{\sqrt{\mu_{\mygreen g}\mu_{\hstar}}}$
& $ {\frac{L_{\myblue f}}{\mu_{\myblue f}} + \frac{\|\mathsf{K}\|^2}{\lambda_{\min}(\mathsf{K}\mathsf{K}^*)}}~^{\color{blue}(1)}$ \\
\begin{tabular}{c}  \algname{APAPC} \\  \citep{kovalev2020optimal}\\\citep{salim2022optimal} \end{tabular}
& \begin{tabular}{c} ${\mygreen g} = 0$ \\ ${\myred h} = \iota_{b} ~^{\color{blue}(2)}$ \end{tabular}
& \xmark 
& $ \sqrt{\frac{L_{\myblue f}}{\mu_{\myblue f}}}  \frac{\|\mathsf{K}\|}{\sqrt{\lambda^+_{\min}(\mathsf{K}\mathsf{K}^*)}} + \frac{\|\mathsf{K}\|^2}{\lambda^+_{\min}(\mathsf{K}\mathsf{K}^*)}$ \\
\begin{tabular}{c}  \algname{OPAPC} \\  \citep{kovalev2020optimal}\\\citep{salim2022optimal} \end{tabular}
& \begin{tabular}{c} ${\mygreen g} = 0$ \\ ${\myred h} = \iota_{b}$ \end{tabular}
& \xmark 
& $ \sqrt{\frac{L_{\myblue f}}{\mu_{\myblue f}}}  \frac{\|\mathsf{K}\|}{\sqrt{\lambda^+_{\min}(\mathsf{K}\mathsf{K}^*)}}$\\
\begin{tabular}{c}  \algname{APDA} \\  \citep{sadiev2022communication} \end{tabular}
& ${\myblue f} = 0$ 
& $ {\frac{\|\mathsf{K}\|}{\sqrt{\mu_{\mygreen g}\mu_{\hstar}}}}^{\color{blue}(3)}$ 
& $ \sqrt{\frac{L_{\mygreen g}}{\mu_{\mygreen g}}} \frac{\|\mathsf{K}\|}{\sqrt{\lambda_{\min}(\mathsf{K}\mathsf{K}^*)}} + \frac{\|\mathsf{K}\|^2}{\lambda_{\min}(\mathsf{K}\mathsf{K}^*)}$ \\
\begin{tabular}{c}  \algname{ACV} \\  \citep{driggs2024practical} \end{tabular}
&  
& $ \sqrt{\frac{L_{\myblue f}}{\mu_{\mygreen g}}} + \frac{\|\mathsf{K}\|}{\sqrt{\mu_{\mygreen g}\mu_{\hstar}}}$
& \xmark \\
\begin{tabular}{c}  \algname{APAPC} \\  \citep{condat2026nesterov} \end{tabular}
& ${\mygreen g} = \frac{\mu_{\mygreen g}}{2}\|x\|^2$ 
& $ \sqrt{\frac{L_{\myblue f}}{\mu_{\mygreen g}}} + \frac{\|\mathsf{K}\|}{\sqrt{\mu_{\mygreen g}\mu_{\hstar}}}$
& $ \sqrt{\frac{L_{\myblue f}}{\mu_{\mygreen g}}}  \frac{\|\mathsf{K}\|}{\sqrt{\lambda_{\min}(\mathsf{K}\mathsf{K}^*)}} + \frac{\|\mathsf{K}\|^2}{\lambda_{\min}(\mathsf{K}\mathsf{K}^*)}$$^{\color{blue}(4)}$ \\
\hline
\cellcolor{linen} \begin{tabular}{c}  \algname{ACV-I} \& \algname{ACV-II} \\  \algname{APDTR-I} \& \algname{APDTR-II} \\  Algorithm \ref{alg:ACV} \& \ref{alg:APDTR} $^{{\color{blue}(5)}}$\end{tabular}
& \cellcolor{linen}  &\cellcolor{linen} $ \sqrt{\frac{L_{\myblue f}}{\mu_{\mygreen g}}} + \frac{\|\mathsf{K}\|}{\sqrt{\mu_{\mygreen g}\mu_{\hstar}}}$ &\cellcolor{linen}  $ \sqrt{\frac{L_{\myblue f} +L_{\mygreen g}}{\mu_{\mygreen g}}}  \frac{\|\mathsf{K}\|}{\sqrt{\lambda_{\min}(\mathsf{K}\mathsf{K}^*)}} + \frac{\|\mathsf{K}\|^2}{\lambda_{\min}(\mathsf{K}\mathsf{K}^*)}$ \\
\end{tabular}
}
\begin{tablenotes}
        {\footnotesize 
        \item [{\color{blue}(1)}] When ${\myred h}$ is nonsmooth, \citet{condat2023randprox} show linear convergence rate for \algname{PDDY} algorithm under the assumption ${\mygreen g} = 0$.
        \item [{\color{blue}(2)}] \citet{salim2022optimal} study the linearly constrained case where ${\myred h} = \iota_{b}$, with $\iota_{b}(x) = (0$ if $x = b;\ +\infty$ otherwise$)$.
        \item [{\color{blue}(3)}] When ${\myred h}$ is smooth, \algname{APDA} reduces to \algname{CP}.
        \item [{\color{blue}(4)}] \citet{condat2026nesterov} show results for nonsmooth and linearly constrained cases. The complexity rate for the linearly constrained case  is the same as presented in the table with $\lambda_{\min}(\mathsf{K}\mathsf{K}^*)$ replaced by $\lambda_{\min}^+ (\mathsf{K}\mathsf{K}^*)$.
        \item [{\color{blue}(5)}] The resulting iteration complexity can be found in Corollary \ref{cor:acv_and_apdtr_complexity}, Corollary \ref{cor:acv_and_apdtr_complexity_non_smooth} and Corollary \ref{cor:acv_and_apdtr_complexity_non_smooth_lin_const} for the smooth, nonsmooth and linearly constrained cases, respectively.
        }
    \end{tablenotes}
    \end{threeparttable}
\end{table}

\section{Problem Setup}\label{secsetup}

Before formally stating our optimization problem, we recall the standard definitions of strong convexity, smoothness, and convex duality. Let $d\ge 1$.

\begin{definition}
    A proper, closed function $\phi: \R^d \rightarrow \R \cup \{+\infty\}$ is called $\mu$-strongly convex (for $\mu \ge 0$) if, for all $x, y \in \mathrm{dom}(\phi)$ and any subgradient $p \in \partial \phi(y)$, it holds that
    \begin{equation}
        \label{eq:mu_strongly_convex_def}
        \phi(x) \geq \phi(y) + \la p, x-y\ra + \tfrac{\mu}{2}\|x-y\|^2.
    \end{equation}
    If $\mu = 0$, the function $\phi$ is simply called convex.
\end{definition}

\begin{definition}
    A differentiable function $\phi: \R^d \rightarrow \R$ is called $L$-smooth (for $L > 0$) if its gradient $\nabla\phi$ is $L$-Lipschitz continuous, i.e.,
    \begin{equation}
        \label{eq:L_smooth_def}
        \|\nabla\phi(x) - \nabla\phi(y)\| \leq L\|x-y\|,~~ \text{ for all } ~~x,y \in \R^d.
    \end{equation}
\end{definition}

\begin{definition}
    Let $\phi: \R^d \rightarrow \R \cup \{+\infty\}$ be a proper, closed, convex function. Its convex conjugate $\phi^*$ is defined as
    \begin{equation*}
        \phi^*(v) = \sup_{u \in \R^d} \big\{ \la v,u\ra - \phi(u) \big\}.
    \end{equation*}
\end{definition}

A fundamental property connecting these definitions is given by the following standard lemma:
\begin{lemma}[
{\citep[Theorem 18.15]{bau17}}]
    \label{lem:strongly_convex_conjugate_smoothness}
    Let $\phi$ be a proper, closed, convex and differentiable function. If $\phi$ is $L$-smooth, its convex conjugate $\phi^*$ is $\nicefrac{1}{L}$-strongly convex.
\end{lemma}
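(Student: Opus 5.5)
The plan is to deduce the strong convexity of $\phi^*$ from a co-coercivity inequality for $\nabla\phi$, combined with the subgradient characterization of the conjugate. The argument has four steps.

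\textbf{Step 1: the descent lemma.} Since $\nabla\phi$ is $L$-Lipschitz, integrating along the segment gives
\[
\phi(y)\le\phi(x)+\la\nabla\phi(x),y-x\ra+\tfrac{L}{2}\|y-x\|^2 .
\]

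\textbf{Step 2: strengthen the lower bound.} Fix $x$ and set $\psi(y)=\phi(y)-\la\nabla\phi(x),y\ra$. This function is convex, $L$-smooth, and minimized at $y=x$, since $\nabla\psi(x)=0$. Applying the descent lemma to $\psi$ at $y$ with the step $y-\tfrac1L\nabla\psi(y)$, and using that $\psi(x)$ is the minimum value, we get
\[
\psi(x)\le\psi(y)-\tfrac{1}{2L}\|\nabla\psi(y)\|^2 .
\]
Unwinding the definition of $\psi$ yields
\[
\phi(y)\ge\phi(x)+\la\nabla\phi(x),y-x\ra+\tfrac{1}{2L}\|\nabla\phi(y)-\nabla\phi(x)\|^2 .
\]
This is the main technical step.

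\textbf{Step 3: transfer to the conjugate.} For a proper, closed, convex $\phi$ we have $u\in\partial\phi^*(v)$ iff $v=\nabla\phi(u)$ (Fenchel--Young equality). In that case $\phi^*(v)=\la v,u\ra-\phi(u)$.

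Take $v_1,v_2\in\mathrm{dom}(\partial\phi^*)$ and subgradients $u_i\in\partial\phi^*(v_i)$, so that $v_i=\nabla\phi(u_i)$. Apply the Step 2 inequality with $y=u_1$ and $x=u_2$, then substitute $\phi(u_i)=\la v_i,u_i\ra-\phi^*(v_i)$. This gives
\[
\phi^*(v_1)\ge\phi^*(v_2)+\la u_2,v_1-v_2\ra+\tfrac{1}{2L}\|v_1-v_2\|^2 ,
\]
which is exactly \eqref{eq:mu_strongly_convex_def} for $\phi^*$ with $\mu=1/L$ and subgradient $p=u_2\in\partial\phi^*(v_2)$. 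Checking this is a short algebraic rearrangement.

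\textbf{Step 4: remaining points of the domain.} If $v_1\in\mathrm{dom}(\phi^*)$ but $\partial\phi^*(v_1)=\emptyset$, I would first obtain the inequality for $v_1$ in the relative interior of the domain, where subgradients exist. I would then pass to the limit along a segment toward $v_1$, using lower semicontinuity of $\phi^*$ and its continuity along segments within the domain, which holds for closed convex functions on $\R^d$.

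Alternatively, the whole argument can be done via duality. Note that $\phi^*-\tfrac{1}{2L}\|\cdot\|^2$ is the conjugate of an infimal convolution, and convexity of $\tfrac L2\|\cdot\|^2-\phi$ gives the claim. However, the direct route above avoids needing that machinery.

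The main obstacle is Step 2, the co-coercivity-type lower bound. Here the minimizer trick for $\psi$ must be applied carefully, with the correct sign and step size.
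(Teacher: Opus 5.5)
Your proof is correct. The paper gives no argument of its own for this lemma. It only cites \citep[Theorem~18.15]{bau17}, where this implication is one item in a list of equivalent characterizations of convex functions with $L$-Lipschitz gradient, stated in a general Hilbert space. That list includes the descent lemma, cocoercivity of $\nabla\phi$, convexity of $\tfrac{L}{2}\|\cdot\|^2-\phi$, convexity of $\phi^*-\tfrac{1}{2L}\|\cdot\|^2$, and strong convexity of $\phi^*$. Your closing alternative, via $\tfrac{L}{2}\|\cdot\|^2-\phi$ and conjugation, is closer to how that textbook result is organized. Your main route is different and more elementary. You prove the cocoercivity-type lower bound of Step~2 directly, transport it to $\phi^*$ through the Fenchel--Young equality on $\mathrm{dom}\,\partial\phi^*=\mathrm{ran}\,\nabla\phi$, and then extend it to the rest of $\mathrm{dom}\,\phi^*$ by a limiting argument. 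This gives a self-contained proof that lands exactly on the subgradient inequality the paper uses as its definition of strong convexity, with no conjugate calculus. The citation gives, in addition, the converse implications and the infinite-dimensional setting, neither of which the paper needs.

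Two minor remarks.

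\textbf{Step~3 labeling.} Applying Step~2 with $y=u_1$, $x=u_2$ and substituting $\phi(u_i)=\la v_i,u_i\ra-\phi^*(v_i)$ actually gives $\phi^*(v_2)\ge\phi^*(v_1)+\la u_1,v_2-v_1\ra+\tfrac{1}{2L}\|v_2-v_1\|^2$. Conjugation swaps the roles of the base point and the evaluation point. Your displayed inequality comes from the choice $y=u_2$, $x=u_1$. This is only a relabeling, since $v_1,v_2$ are arbitrary.

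\textbf{Step~4 limit.} Lower semicontinuity gives the wrong inequality for the limit you take. Along $v_t=(1-t)w+tv_1$ with $w\in\mathrm{ri}\,\mathrm{dom}\,\phi^*$, what you need is $\limsup_{t\uparrow 1}\phi^*(v_t)\le\phi^*(v_1)$. This follows at once from convexity, $\phi^*(v_t)\le(1-t)\phi^*(w)+t\phi^*(v_1)$. Your appeal to continuity along segments also works, but convexity alone suffices. The step is genuinely needed: for $\phi(x)=\sqrt{1+\|x\|^2}$ one gets $\phi^*(v)=-\sqrt{1-\|v\|^2}$ on the closed unit ball. This function is finite on the unit sphere but has no subgradient there.
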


With these concepts established, we consider the composite optimization problem formulated as the minimization of three convex functions: 
\begin{equation}
    \label{eq:main_opt_prob}
    \min_{x\in \R^{d_x}} {\myblue f}(x) + {\mygreen g}(x) + {\myred h}(\mathsf{K} x),
\end{equation}
where ${\myblue f}:\R^{d_x}\rightarrow \R$, ${\mygreen g}:\R^{d_x}\rightarrow \R \cup \{+\infty\}$, and ${\myred h}:\R^{d_y} \rightarrow \R \cup \{+\infty\}$ are proper, closed, convex functions for some $d_x\ge 1$ and $d_y\ge 1$, 
and $\mathsf{K}: \R^{d_x}\rightarrow\R^{d_y}$ is a linear operator with operator norm $\|\mathsf{K}\| = \sup_{x~:~\|x\|=1}\|\mathsf{K} x\|$. Additionally, we assume that ${\myblue f}$ is $L_{{\myblue f}}$-smooth and $\mu_{{\myblue f}}$-strongly convex, and ${\mygreen g}$ is $\mu_{{\mygreen g}}$-strongly convex, for some $L_{{\myblue f}}>0$, $\mu_{{\myblue f}}\ge 0$, $\mu_{{\mygreen g}}\ge 0$.

To derive accelerated methods, we can rewrite problem \eqref{eq:main_opt_prob} as follows:
\begin{equation}
\label{eq:main_opt_prob_2}
    \min_{x\in \R^{d_x}}  {\mygreen g}(x) + \underbrace{{\myblue f}(\mathsf{I} x) +{\myred h}(\mathsf{K} x)}_{\eqdef {\mypuprle \ell}(\mathsf{L} x)},
\end{equation}
where $\mathsf{I} \in \R^{d_x\times d_x}$ is the identity operator, and we define the block matrix $\mathsf{L}$ as
\begin{equation*}
    \mathsf{L} = \begin{pmatrix}
        \mathsf{I} \\
        \mathsf{K}
    \end{pmatrix},~~ \text{ and accordingly } ~~ \mathsf{L}^* = \begin{pmatrix}
        \mathsf{I} & \mathsf{K}^*
    \end{pmatrix}.
\end{equation*}

By the definition of the conjugate function, Problem \eqref{eq:main_opt_prob_2} is equivalent to the following saddle-point reformulation:
\begin{equation*}
    \min_{x\in \R^{d_x}} \max_{v \in \R^{d_x}\times\R^{d_y}}  {\mygreen g}(x) + \la v,\mathsf{L} x\ra - {\mypuprle \ell}^*(v), \quad \text{where }\quad v \eqdef \begin{pmatrix}
        u\\ y
    \end{pmatrix}.
\end{equation*}
Equivalently, by the definition of the function ${\mypuprle \ell}$, the saddle-point problem is:
\begin{equation}
    \label{eq:main_saddle_point_reformulation}
    \min_{x\in \R^{d_x}} \max_{y \in \R^{d_y}}\max_{u \in \R^{d_x}}  {\mygreen g}(x) + \la y, \mathsf{K} x\ra + \la u, x\ra - \hstar(y) - {\myblue f^*}(u).
\end{equation}

A saddle-point solution $(\xs, u^{\star}, \ys) \in \R^{d_x} \times \R^{d_x} \times \R^{d_y}$  of problem \eqref{eq:main_saddle_point_reformulation} is characterized by the  optimality conditions
\begin{equation}
    \label{eq:main_optimality_condition}
    \begin{cases}
        0 \in \partial {\mygreen g}(\xs) + \mathsf{K}^*\ys + u^{\star};\\
        0 \in \partial \hstar(\ys) - \mathsf{K}\xs;\\
        0 \in \partial {\myblue f^*} (u^{\star}) - \xs;
    \end{cases}
    \quad\Leftrightarrow \quad
    \begin{cases}
        0 \in \partial {\mygreen g}(\xs) + \mathsf{K}^*\ys + \nabla {\myblue f} (\zs);\\
        0 \in \partial \hstar(\ys) - \mathsf{K}\xs;\\
        0 = \xs-\zs.
    \end{cases}
\end{equation}
Indeed, from the formulation of \eqref{eq:main_saddle_point_reformulation}, we have $u^{\star} = \argmax_{u\in \R^{d_x}} \left\{ \langle u, \xs \rangle - {\myblue f^*} (u) \right\}$. It follows that $u^{\star} \in \partial {\myblue f} (\xs)$ \citep{bau17}; 
equivalently, by differentiability of ${\myblue f}$, we have $u^{\star} = \nabla {\myblue f} (\xs)$.

For the optimization problems to be well posed, we assume throughout that a solution to \eqref{eq:main_optimality_condition} exists.

Building on the reformulation in \eqref{eq:main_saddle_point_reformulation}, we will systematically derive several accelerated algorithms via a primal--dual perspective. First, we explain how to derive Accelerated Proximal Gradient Descent (\algname{APGD}) and Accelerated Proximal Gradient Extrapolation (\algname{APGE}) via the application of \algname{CP} on problem \eqref{eq:main_opt_prob} in the simplified case where ${\myred h} = 0$ \citep{chambolle2011first,condat2023proximal}. This will serve as our core recipe for acceleration. Next, we apply this recipe to the original problem~\eqref{eq:main_opt_prob}. Through this methodology, we derive four novel accelerated primal--dual algorithms: the Accelerated Condat--V\~u algorithms (Forms I and II) and the Accelerated Primal--Dual Twice Reflected algorithms (Forms I and II).

\subsection{Proximal operator}

For any proper, closed, convex function $F:\R^d\to \R\cup\{+\infty\}$, its proximal operator is defined as a mapping from $\R^d$ to $\R^d$:
\begin{equation}
    \label{eq:prox_def}
    v \mapsto \prox_{F}(v) \eqdef \argmin_{w\in \R^d}\left\{F(w)+\tfrac{1}{2}\|w-v\|^2\right\}.
\end{equation} 
In particular, for a proximal gradient update with stepsize $\gamma > 0$, involving $F$ and a convex differentiable function $G$,  we can write:
\begin{align*}
    \prox_{\gamma F}\big(v - \gamma \nabla G(v)\big) &= \argmin_{w\in \R^d}\left\{ F(w) + \la  \nabla G(v), w-v\ra +\tfrac{1}{2\gamma}\|w- v\|^2\right\}.
\end{align*}

\begin{definition}
    \label{def:bregman_divergence}
    Let $\phi:\R^d \rightarrow \R$ be a convex and differentiable function. Then the Bregman divergence associated with $\phi$ is defined as 
    \begin{equation}
        \label{eq:Bregman_def}
        D_{\phi}(x;y) \eqdef \phi (x) -\phi(y) - \la \nabla\phi (y), x-y\ra.
    \end{equation}
\end{definition}

The Bregman proximal operator of $F$ with respect to $\phi$ is defined as a mapping from $\R^d$ to $\R^d$:
\begin{equation*}
    v \mapsto \mathrm{prox}^{\phi}(v, F) \eqdef \argmin_{w \in \R^d}\left\{F(w) + D_{\phi}(w;v)\right\}.
\end{equation*}
Consequently, in our notation, the Bregman proximal gradient update is written as:
\begin{align*}
    \mathrm{prox}^{\phi}(v, \gamma (F + \la\nabla G(v), \cdot\ra)) 
    &= \argmin_{w\in \R^d}\left\{\gamma F(w) + \gamma\la\nabla G(v), w\ra  + D_{\phi}(w; v)\right\} \\
    &= \argmin_{w\in \R^d}\left\{ F(w) + \la\nabla G(v), w\ra  + \tfrac{1}{\gamma}D_{\phi}(w; v)\right\}.
\end{align*}

\section{Accelerated Proximal Gradient Descent and Extrapolation }
\label{sec:APGD_and_APGE}

In this section, we derive the Accelerated Proximal Gradient Descent (\algname{APGD}) and Accelerated Proximal Gradient Extrapolation (\algname{APGE}) methods based on \algname{CP} (Forms I and II). The primary distinction between the algorithms proposed by \citet{lan2018optimal, lan2018random} and our \algname{APGD}/\algname{APGE} lies in the extrapolation step. While we do not claim novelty for the \algname{APGD} and \algname{APGE} algorithms themselves, deriving them through this primal--dual lens allows us to provide a unified, Lyapunov-based convergence analysis that we find significantly simpler than previous constructions.

By setting ${\myred h} = 0$, problem~\eqref{eq:main_opt_prob} simplifies to:
\begin{equation}
\label{eq:main_prob_apgd}
    \min_{x\in \R^{d_x}} {\myblue f}(x) + {\mygreen g}(x) \quad\Leftrightarrow\quad  \min_{x\in \R^{d_x}} \max_{u\in \R^{d_x}} {\mygreen g}(x) + \la u, x\ra - {\myblue f^*}(u).
\end{equation}
Then, the optimality conditions for \eqref{eq:main_prob_apgd} (the simplified version of \eqref{eq:main_saddle_point_reformulation}) are:
\begin{equation}
    \label{eq:main_prob_apgd_optimality_condition}
    \begin{cases}
        0 \in \partial {\mygreen g}(\xs) + u^{\star},\\
        0 \in \partial {\myblue f^*} (u^{\star}) - \xs;
    \end{cases}
    \quad \Leftrightarrow\quad  
    \begin{cases}
        0 \in \partial {\mygreen g}(\xs) + \nabla {\myblue f} (\zs),\\
        0 = \zs - \xs.
    \end{cases}
\end{equation}

Applying \algname{CP} (Forms I and II) to the saddle-point reformulation in \eqref{eq:main_prob_apgd}, we obtain the following two methods:

\vspace{-0.5em}
\noindent
\begin{minipage}{0.48\textwidth}
\begin{align}
    x^{k+1} &= \prox_{\eta_x {\mygreen g}}\left(x^k- \eta_xu^k\right),\notag\\
    u^{k+1} &= \prox_{\eta_u {\myblue f^*}}\left(u^k+\eta_u (2x^{k+1}- x^k)\right); \label{eq:dual_update_apgd_1}
\end{align}
\end{minipage}%
\hfill\vrule\hfill
\begin{minipage}{0.48\textwidth}
\begin{align}
    u^{k+1} &= \prox_{\eta_u {\myblue f^*}}\left(u^k+\eta_u x^k\right),\label{eq:dual_update_apgd_2}\\
    x^{k+1} &= \prox_{\eta_x {\mygreen g}}\left(x^k- \eta_x(2u^{k+1}- u^k)\right).\notag
\end{align}
\end{minipage}

Next, we modify the dual updates \eqref{eq:dual_update_apgd_1} and \eqref{eq:dual_update_apgd_2} by replacing the standard Euclidean penalty $\tfrac{1}{2\eta_u}\|u -u^k\|^2$ with the Bregman divergence $\tfrac{1}{\eta_u}D_{{\myblue f^*} }(u; u^k)$. Writing this Bregman proximal step explicitly yields:
\vspace{-0.5em}
\noindent
\begin{minipage}{0.48\textwidth}
\begin{align}
    x^{k+1} &= \prox_{\eta_x {\mygreen g}}\left(x^k- \eta_xu^k\right),\notag\\
    u^{k+1} &= \prox^{\myblue f^*}\!\left(u^k, \eta_u\left( {\myblue f^*}(\cdot) -\la\cdot,2x^{k+1}-x^k\ra\right)\right); \notag
\end{align}
\end{minipage}%
\hfill\vrule\hfill
\begin{minipage}{0.48\textwidth}
\begin{align}
    u^{k+1} &= \prox^{\myblue f^*}\!\left(u^k, \eta_u\left( {\myblue f^*}(\cdot) -\la\cdot,x^k\ra\right)\right),\notag\\
    x^{k+1} &= \prox_{\eta_x {\mygreen g}}\left(x^k- \eta_x(2u^{k+1}- u^k)\right).\notag
\end{align}
\end{minipage}

To facilitate the algorithmic derivation, we assume that ${\myblue f}$ and ${\myblue f^*}$ are both differentiable. By evaluating the optimality condition of the $u$-update (setting the gradient with respect to $u$ to zero), we obtain:

\vspace{-0.5em}
\noindent
\begin{minipage}{0.53\textwidth}
\begin{align}
    \nabla {\myblue f^*} (u^{k+1}) &= \nabla {\myblue f^*} (u^k) - \eta_u(\nabla {\myblue f^*} (u^{k+1}) - (2x^{k+1} - x^k) ); \notag
\end{align}
\end{minipage}%
\hfill\vrule\hfill
\begin{minipage}{0.43\textwidth}
\begin{align}
    \nabla {\myblue f^*} (u^{k+1}) &= \nabla {\myblue f^*} (u^k) - \eta_u(\nabla {\myblue f^*} (u^{k+1}) - x^k ). \notag
\end{align}
\end{minipage}\smallskip
\begin{lemma}[
\citep{bau17}]
    \label{lem:inverse_gradient_of_conjugate_function}
        If a function $\phi: \R^{d_x} \to \R \cup \{+\infty\}$ is proper, closed, convex, then $(\partial \phi)^{-1} = \partial \phi^*$. If, additionally, both $\phi$ and 
        $\phi^*$ are differentiable, then 
        $(\nabla \phi)^{-1} = \nabla \phi^*$.
\end{lemma}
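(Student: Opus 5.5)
The plan is to prove the first claim, $(\partial\phi)^{-1}=\partial\phi^*$, directly from the Fenchel--Young inequality. I will then deduce the differentiable case from it.

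\textbf{Step 1 (Fenchel--Young).} For a proper, closed, convex $\phi$, the definition of the conjugate gives $\phi(x)+\phi^*(v)\ge\la v,x\ra$ for all $x,v$. I will show that equality holds if and only if $v\in\partial\phi(x)$. Suppose $v\in\partial\phi(x)$. The subgradient inequality (the case $\mu=0$ of \eqref{eq:mu_strongly_convex_def}) gives $\la v,u\ra-\phi(u)\le\la v,x\ra-\phi(x)$ for every $u$. So the supremum defining $\phi^*(v)$ is attained at $u=x$, and equality follows. Conversely, equality says $x$ maximizes $u\mapsto\la v,u\ra-\phi(u)$. Rearranged, this is exactly the subgradient inequality at $x$ with subgradient $v$.

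\textbf{Step 2 (biconjugation).} Apply Step 1 to $\phi^*$. This function is proper, closed and convex because $\phi$ is proper: it is a supremum of affine functions and is not identically $+\infty$. Step 1 then gives $x\in\partial\phi^*(v)$ if and only if $\phi^*(v)+\phi^{**}(x)=\la v,x\ra$. The main obstacle is to replace $\phi^{**}$ by $\phi$, which is the Fenchel--Moreau theorem and is exactly where closedness is needed. I will cite it from \citep{bau17} rather than reprove it; a proof would separate the epigraph of $\phi$ from a point below it by a non-vertical hyperplane. With $\phi^{**}=\phi$, both conditions reduce to the same equality $\phi(x)+\phi^*(v)=\la v,x\ra$. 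Hence $v\in\partial\phi(x)$ if and only if $x\in\partial\phi^*(v)$, which means $(\partial\phi)^{-1}=\partial\phi^*$ as set-valued maps.

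\textbf{Step 3 (differentiable case).} If $\phi$ and $\phi^*$ are both differentiable, their subdifferentials are the singletons $\{\nabla\phi(x)\}$ and $\{\nabla\phi^*(v)\}$ everywhere. Step 2 then reads $v=\nabla\phi(x)$ if and only if $x=\nabla\phi^*(v)$. This gives $\nabla\phi^*(\nabla\phi(x))=x$ and $\nabla\phi(\nabla\phi^*(v))=v$ for all $x,v$. So $\nabla\phi$ is a bijection with inverse $\nabla\phi^*$.
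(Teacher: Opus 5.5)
Your proof is correct. It is the standard argument behind the result the paper cites from \citep{bau17}: the equality case of Fenchel--Young combined with Fenchel--Moreau $\phi^{**}=\phi$, followed by $\partial\phi(x)=\{\nabla\phi(x)\}$ for the differentiable case. The paper itself gives no proof beyond that citation.

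The only loose point is your justification that $\phi^*$ is proper. The fact that $\phi^*\not\equiv+\infty$ does not follow from properness of $\phi$ alone (take $\phi=-\|\cdot\|^2$). It does follow immediately from the Fenchel--Moreau theorem you already invoke, since $\phi^{**}=\phi$ is proper.
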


Denoting $z^k \eqdef \nabla {\myblue f^*} (u^k)$ (which by Lemma~\ref{lem:inverse_gradient_of_conjugate_function} implies $u^k = \nabla {\myblue f}(z^k)$) and substituting $\eta_z \eqdef \eta_u$, we arrive at

\vspace{-0.5em}
\noindent
\begin{minipage}{0.48\textwidth}
\begin{align}
    x^{k+1} &= \prox_{\eta_x {\mygreen g}}\left(x^k- \eta_x\nabla {\myblue f}(z^k)\right),\notag\\
    z^{k+1} &= z^k - \eta_z (z^{k+1} - (2x^{k+1} - x^k) ), \notag
\end{align}
\end{minipage}%
\hfill\vrule\hfill
\begin{minipage}{0.48\textwidth}
\begin{align}
    z^{k+1} &= z^k -\eta_z (z^{k+1} -  x^k ),\notag\\
    x^{k+1} &= \prox_{\eta_x {\mygreen g}}\left(x^k- \eta_x(2\nabla {\myblue f}(z^{k+1})- \nabla {\myblue f}(z^{k}))\right),\notag
\end{align}
\end{minipage}

which correspond to the Accelerated Proximal Gradient Descent (\algname{APGD}) and Accelerated Proximal Gradient Extrapolation (\algname{APGE}) methods, detailed in Algorithm~\ref{alg:APGD_APGE}.

\begin{remark}
    The differentiability of $\myblue f^*$ is used only for this formal derivation. The algorithms and convergence results below are stated and proved directly in the $z$-variables under the weaker assumptions that ${\myblue f}$ is $L_{{\myblue f}}$-smooth and convex.
\end{remark}

\begin{algorithm}[ht!]
\caption{Accelerated Proximal Gradient Descent \& Extrapolation (\algname{APGD}\& \algname{APGE})}
\label{alg:APGD_APGE}
    \begin{tabular}{@{}p{0.45\linewidth} | p{0.52\linewidth}@{}}
        \vspace{-0.7em} 
        \begin{algorithmic}[1]
            \State \textbf{Input:} Stepsizes $\eta_x, \eta_z >0$
            \For{ $k =  0, 1, 2, \ldots$}
                \State $x^{k+1} =\prox_{\eta_x {\mygreen g}}\left(x^k- \eta_x\nabla {\myblue f} (z^k)\right)$
                \State $z^{k+1} = \frac{1}{1+\eta_z}z^k +\frac{\eta_z}{1+\eta_z}  \left(2x^{k+1}-x^k\right)$
            \EndFor
        \end{algorithmic}
        &
        \vspace{-0.7em}
        \begin{algorithmic}[1]
            \State \textbf{Input:} Stepsizes $\eta_x, \eta_z > 0$
            \For{ $k =  0, 1, 2, \ldots$}
                \State $z^{k+1} = \frac{1}{1+\eta_z}z^k +\frac{\eta_z}{1+\eta_z}  \left(x^k\right)$
                \State $x^{k+1} =\prox_{\eta_x {\mygreen g}}\left(x^k- \eta_x(2\nabla {\myblue f}(z^{k+1})- \nabla {\myblue f}(z^{k}))\right)$
            \EndFor
        \end{algorithmic} 
    \end{tabular}\vspace{-0.2em}
\end{algorithm}

\begin{remark}
It is worth noting that there is an alternative way to derive the exact iterates of Algorithm~\ref{alg:APGD_APGE}. Specifically, we can define two monotone operators and a metric matrix to apply the matrix-scaled Douglas--Rachford splitting method:
\begin{equation*}
    \begin{cases}
         (\mathsf{P} + \cB)(z^k) = \mathsf{P} v^k;\\
         (\mathsf{P} + \cA)(w^{k+1}) = \mathsf{P}(2z^k -v^k);\\
         v^{k+1} = v^k +w^{k+1} -z^k.
    \end{cases}
\end{equation*}
The corresponding optimality condition and metric matrix are defined as follows:
\begin{equation*}
    \begin{pmatrix}
        0\\ 0
    \end{pmatrix}
    \in 
    \underbrace{\begin{pmatrix}
        \partial {\mygreen g} (x)  + u\\  -x 
    \end{pmatrix}}_{\eqdef \cB (x,u)}
    +
    \underbrace{\begin{pmatrix}
        0\\ \nabla  {\myblue f^*} (u)
    \end{pmatrix}}_{\eqdef \cA(x,u)}, \quad 
    \mathsf{P} = \begin{pmatrix}
        \left(\tfrac{1}{\eta_x} - \eta_u\right) \mathsf{I}_{d_x} & 0\\
        0 & \tfrac{1}{\eta_u}\mathsf{I}_{d_x}
    \end{pmatrix}.
\end{equation*}
Consequently, \algname{APGD} can be derived from the matrix-scaled Douglas--Rachford splitting method using operators $\cA$ and $\cB$. Similarly, \algname{APGE} can be derived simply by swapping the roles of operators $\cA$ and $\cB$.
\end{remark}

It is worth mentioning that \algname{APGD} (Algorithm~\ref{alg:APGD_APGE}) is indeed an accelerated version of Proximal Gradient Descent (\algname{PGD}). To see this clearly, suppose we set $z^0 = x^0$ and $\eta_z = 1$. The $z$-update then yields:
$$
    z^{k+1} = z^k - \big(z^{k+1} - (2x^{k+1}-x^k)\big) \quad \Rightarrow \quad 2z^{k+1}-z^k = 2x^{k+1} - x^k.
$$
By induction, starting from $z^0 = x^0$, it follows that $z^k = x^k$ for all $k \geq 0$. Substituting this into the $x$-update gives:
$$
    x^{k+1} = \prox_{\eta_x {\mygreen g}}\left(x^k- \eta_x\nabla {\myblue f} (x^k)\right),
$$
which is exactly the standard update rule for \algname{PGD}.

A similar observation holds for \algname{APGE} (Algorithm~\ref{alg:APGD_APGE}); it is indeed an accelerated version of the Forward-Reflected-Backward  splitting method (\algname{FRB}) proposed by \citet{malitsky2020forward}. To demonstrate this, suppose we set  $\eta_z = \frac{1}{\alpha_z -1}$ for some constant $\alpha_z > 1$. The $z$-update then yields: 
$$
    z^{k+1} = z^k - \eta_z(z^{k+1} - x^k) \quad \Rightarrow \quad z^{k+1} = \frac{1}{1+\eta_z}z^k + \frac{\eta_z}{1+\eta_z} x^k = \left(1-\frac{1}{\alpha_z}\right)z^k + \frac{1}{\alpha_z}x^k.
$$
As $\alpha_z \to 1$ (and consequently $\eta_z \to \infty$) and setting $z^0 = x^{-1}$, we see that $z^{k+1} \to x^k$. This is a formal limiting relation showing algorithmic ancestry; it is not covered by the stepsize regime in the convergence theorems below. In this limit, the $x$-update converges to:
$$
    x^{k+1} = \prox_{\eta_x {\mygreen g}}\left(x^k- \eta_x\big(2\nabla {\myblue f} (x^k) - \nabla {\myblue f} (x^{k-1})\big)\right),
$$
which is exactly the update rule for \algname{FRB}.

As mentioned previously, \algname{APGD} and \algname{APGE} were originally proposed by \citet{lan2018optimal} and \citet{lan2018random}, respectively. The primary algorithmic difference between our formulation and theirs lies in the selection of the extrapolation parameters $\beta_x, \beta_z \in (0,1]$: 
\begin{align*}
    \text{\citet{lan2018optimal} use:} \quad z^{k+1} &= z^k -\eta_z\left(z^{k+1}- \colorbox{yellow!30}{$\displaystyle\left[x^{k+1} +\beta_x(x^{k+1}- x^k)\right]$}\right);\\
    \text{\citet{lan2018random} use:} \quad x^{k+1} &= \prox_{\eta_x {\mygreen g}} \left(x^k -\eta_x \colorbox{yellow!30}{$\displaystyle\left[\nabla {\myblue f}(z^{k+1}) +\beta_z(\nabla {\myblue f}(z^{k+1}) - \nabla {\myblue f}(z^{k}))\right]$}\right).
\end{align*}
In our framework, we inherently fix $\beta_x = \beta_z = 1$ due to the primal--dual derivation. Beyond this structural simplification, the most significant difference is our proof technique: rather than relying on complex estimating sequences, we provide a unified, simplified Lyapunov-based analysis and explicitly construct the Lyapunov function for both algorithms.

\subsection{Convergence Guarantees}

We now establish the convergence guarantees for \algname{APGD} and \algname{APGE}. To this end, we first define the following Lyapunov function:
\begin{equation*}
    \Phi_{k} \eqdef \frac{1}{2\eta_x}\|x^{k}-\xs\|^2 + \frac{1}{\eta_z} D_{{\myblue f}}(z^{k}; \zs) \pm \la \nabla {\myblue f}(z^{k}) - \nabla{\myblue f}(\zs), x^{k} - \xs\ra, \quad \forall~ k \geq 0,
\end{equation*}
where the $\pm$ sign is taken as a minus for \algname{APGD} and a plus for \algname{APGE}.

\begin{theorem}
\label{th:apgd_apge_conv}
    Let $(\xs,\zs)$ be a solution of \eqref{eq:main_prob_apgd_optimality_condition}, and suppose that
    ${\mygreen g}$ is $\mu_{{\mygreen g}}$-strongly convex for some $\mu_{\mygreen g} \ge 0$ .
     If the stepsizes satisfy $L_{{\myblue f}}\eta_x\eta_z \leq 1$, then for every $k \geq 0$, the iterates generated by \algname{APGD} and \algname{APGE} (Algorithm~\ref{alg:APGD_APGE}) satisfy:
    \begin{equation}
        \Phi_k \leq \theta^k \Phi_0,\qquad \text{where}\quad \theta = \max\left\{\frac{1}{1+\mu_{{\mygreen g}}\eta_x}, \frac{2}{2+\eta_z}\right\}.
    \end{equation}
\end{theorem}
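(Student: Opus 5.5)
We prove a one-step contraction $\Phi_{k+1}\le\theta\Phi_k$ and iterate it; APGD and APGE are handled in parallel, and the sign in front of the cross term is exactly what the extrapolation produces in each case. Throughout we use $\zs=\xs$ and $-\nabla f(\zs)\in\partial g(\xs)$ from \eqref{eq:main_prob_apgd_optimality_condition}.

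\textbf{APGD: the two inequalities.} First, the prox step gives $\frac{1}{\eta_x}(x^k-x^{k+1})-\nabla f(z^k)\in\partial g(x^{k+1})$. Combining this with $\mu_g$-strong monotonicity of $\partial g$ at $x^{k+1}$ and $\xs$ yields
\[
\tfrac{1}{\eta_x}\la x^k-x^{k+1},x^{k+1}-\xs\ra-\la \nabla f(z^k)-\nabla f(\zs),x^{k+1}-\xs\ra\ge \mu_g\|x^{k+1}-\xs\|^2 .
\]
Second, the $z$-update reads $z^{k+1}-z^k=-\eta_z(z^{k+1}-\bar x^{k+1})$ with $\bar x^{k+1}=2x^{k+1}-x^k$. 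We pair it with $\nabla f(z^{k+1})-\nabla f(\zs)$ and use the three-point identity for Bregman divergences:
\[
\la\nabla f(z^{k+1})-\nabla f(\zs),z^{k+1}-z^k\ra=D_f(z^{k+1};\zs)-D_f(z^k;\zs)+D_f(z^k;z^{k+1}).
\]
We also use $\la \nabla f(z^{k+1})-\nabla f(\zs),z^{k+1}-\zs\ra=D_f(z^{k+1};\zs)+D_f(\zs;z^{k+1})$. This gives
\[
\tfrac1{\eta_z}\bigl(D_f(z^{k+1};\zs)-D_f(z^k;\zs)+D_f(z^k;z^{k+1})\bigr)+D_f(z^{k+1};\zs)+D_f(\zs;z^{k+1})=\la \nabla f(z^{k+1})-\nabla f(\zs),\bar x^{k+1}-\xs\ra .
\]

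\textbf{APGD: combining and closing.} Add the two relations and rewrite the first with the polarization identity $2\la x^k-x^{k+1},x^{k+1}-\xs\ra=\|x^k-\xs\|^2-\|x^{k+1}-\xs\|^2-\|x^{k+1}-x^k\|^2$. Split $\bar x^{k+1}-\xs=(x^{k+1}-\xs)+(x^{k+1}-x^k)$. The cross terms then telescope into $-\la\nabla f(z^{k+1})-\nabla f(\zs),x^{k+1}-\xs\ra+\la\nabla f(z^k)-\nabla f(\zs),x^k-\xs\ra$, up to a leftover
\[
\la\nabla f(z^{k+1})-\nabla f(z^k),x^{k+1}-x^k\ra .
\]
This is the reason for the minus sign in $\Phi$. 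We bound the leftover by Young's inequality together with $\|\nabla f(z^{k+1})-\nabla f(z^k)\|^2\le 2L_fD_f(z^k;z^{k+1})$:
\[
\la\nabla f(z^{k+1})-\nabla f(z^k),x^{k+1}-x^k\ra\le\tfrac1{2\eta_x}\|x^{k+1}-x^k\|^2+L_f\eta_xD_f(z^k;z^{k+1}).
\]
The condition $L_f\eta_x\eta_z\le1$ makes $L_f\eta_x\le\frac1{\eta_z}$, so the leftover is absorbed by the terms $\frac1{2\eta_x}\|x^{k+1}-x^k\|^2$ and $\frac1{\eta_z}D_f(z^k;z^{k+1})$. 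What remains is
\[
\Bigl(\tfrac1{2\eta_x}+\mu_g\Bigr)\|x^{k+1}-\xs\|^2+\Bigl(\tfrac1{\eta_z}+1\Bigr)D_f(z^{k+1};\zs)+D_f(\zs;z^{k+1})-\la\cdot,\cdot\ra_{k+1}\le\Phi_k .
\]

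\textbf{Main obstacle: contraction with a sign-indefinite cross term.} The left side must dominate $\theta^{-1}\Phi_{k+1}$. We write it as $\Phi_{k+1}$ plus the excess $\mu_g\|x^{k+1}-\xs\|^2+D_f(z^{k+1};\zs)+D_f(\zs;z^{k+1})$, and we need to show this excess is at least $(\theta^{-1}-1)\Phi_{k+1}$. First, $\Phi_{k+1}\ge0$: Young plus $\|\nabla f(z)-\nabla f(\zs)\|^2\le 2L_fD_f(z;\zs)$ and $L_f\eta_x\eta_z\le1$ bound the cross term by the two quadratic terms. We also need an upper bound on $\Phi_{k+1}$. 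Using $\|\nabla f(z)-\nabla f(\zs)\|^2\le L_f\la\nabla f(z)-\nabla f(\zs),z-\zs\ra=L_f(D_f(z;\zs)+D_f(\zs;z))$, Young gives
\[
\Phi_{k+1}\le \tfrac1{\eta_x}\|x^{k+1}-\xs\|^2+\tfrac1{\eta_z}D_f(z^{k+1};\zs)+\tfrac{1}{2\eta_z}D_f(\zs;z^{k+1}).
\]
Comparing term by term, the excess dominates $\min\{\mu_g\eta_x,\eta_z/2\}$ times this bound, since $\mu_g=\mu_g\eta_x\cdot\frac1{\eta_x}$, $1\ge\frac{\eta_z}{2}\cdot\frac1{\eta_z}$ and $1\ge\frac{\eta_z}{2}\cdot\frac1{2\eta_z}$. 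Hence the left side is at least $(1+\min\{\mu_g\eta_x,\eta_z/2\})\Phi_{k+1}=\theta^{-1}\Phi_{k+1}$.

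The delicate point is exactly this allocation of constants. In particular, the symmetric Bregman term $D_f(\zs;z^{k+1})$ has to pay for part of the cross term, and that is where the factor $2/(2+\eta_z)$ rather than $1/(1+\eta_z)$ comes from. I expect to adjust the Young weights here, e.g.\ splitting them as $\frac{1}{2\eta_x}$ and $\frac{\eta_x}{2}$, until the constants match $\theta$.

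\textbf{APGE.} The argument is symmetric. The $z$-step pairs with $x^k$, and the $x$-step carries the extrapolated gradient $2\nabla f(z^{k+1})-\nabla f(z^k)$. The leftover becomes $\la\nabla f(z^{k+1})-\nabla f(z^k),x^{k}-x^{k+1}\ra$ with the opposite orientation, which produces the plus sign in $\Phi$. It is absorbed by the same Young step under $L_f\eta_x\eta_z\le1$, and the contraction step is identical.
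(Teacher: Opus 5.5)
Your argument follows the paper's proof. It uses the same two descent inequalities: strong monotonicity of $\partial g$ at the prox step, and the three-point identity for the $z$-step. It absorbs the leftover $\langle \nabla f(z^{k+1})-\nabla f(z^k),\pm(x^{k+1}-x^k)\rangle$ by the same Young step under $L_f\eta_x\eta_z\le 1$, and closes in the same way, by comparing the excess against an upper bound on $\Phi_{k+1}$. Your sign bookkeeping for \algname{APGE} is also correct.

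There is one slip. Your displayed bound $\Phi_{k+1}\le \tfrac{1}{\eta_x}\|x^{k+1}-x^\star\|^2+\tfrac{1}{\eta_z}D_f(z^{k+1};z^\star)+\tfrac{1}{2\eta_z}D_f(z^\star;z^{k+1})$ is not valid; it fails, for example, for quadratic $f$ with $L_f\eta_x\eta_z=1$. Co-coercivity adds $\tfrac{1}{2\eta_z}D_f(z^{k+1};z^\star)$ on top of the $\tfrac{1}{\eta_z}D_f(z^{k+1};z^\star)$ already present in $\Phi_{k+1}$, so the correct coefficient is $\tfrac{3}{2\eta_z}$. Your term-by-term comparison still closes, because $\tfrac{\eta_z}{2}\cdot\tfrac{3}{2\eta_z}\le 1$.

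Alternatively, do as the paper does. Drop $D_f(z^\star;z^{k+1})$ and use $\Phi_{k+1}\le \tfrac{1}{\eta_x}\|x^{k+1}-x^\star\|^2+\tfrac{2}{\eta_z}D_f(z^{k+1};z^\star)$, which follows from $\|\nabla f(z)-\nabla f(z^\star)\|^2\le 2L_f D_f(z;z^\star)$. This is where the factor $2/(2+\eta_z)$ actually comes from; the symmetric Bregman term is not needed, and no reweighting of the Young weights is required.
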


The proof of Theorem~\ref{th:apgd_apge_conv} is provided in Appendix~\ref{sec:proof_APGD_APGE}. Next, we derive the optimal parameter selection for \algname{APGD} and \algname{APGE} and state the corresponding convergence rate.

\begin{corollary}
    \label{cor:apgd_apge_complexity}
    Under the assumptions of Theorem~\ref{th:apgd_apge_conv}, suppose additionally that $\mu_{\mygreen g} > 0$. Then, the solution $(\xs,\zs)$ of \eqref{eq:main_prob_apgd_optimality_condition} is unique. Moreover, for any $\varepsilon >0$, the number of iterations required to achieve $\Phi_k \leq \varepsilon \Phi_0$ for \algname{APGD} and \algname{APGE} (Algorithm~\ref{alg:APGD_APGE}) with the parameter selection 
    \begin{equation*}
        \eta_x = \frac{1}{\sqrt{L_{\myblue f}\mu_{\mygreen g}}},\quad \eta_z  = \sqrt{\frac{\mu_{\mygreen g}}{L_{\myblue f}}},
    \end{equation*}
    is 
    \begin{equation*}
        \cO\left(\left(1+\sqrt{\frac{L_{\myblue f}}{\mu_{\mygreen g}}}\right)\log\frac{1}{\varepsilon} \right).
    \end{equation*}
\end{corollary}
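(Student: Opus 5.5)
The plan has two parts: uniqueness of the solution, and a direct computation of the contraction factor $\theta$ from Theorem~\ref{th:apgd_apge_conv} under the proposed stepsizes.

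\emph{Uniqueness.} When $\mu_{\mygreen g}>0$, the objective ${\myblue f}+{\mygreen g}$ is $\mu_{\mygreen g}$-strongly convex, since ${\myblue f}$ is convex. By the standing well-posedness assumption a solution of \eqref{eq:main_prob_apgd_optimality_condition} exists. Any $\xs$ satisfying $0\in\partial {\mygreen g}(\xs)+\nabla{\myblue f}(\xs)$ is a minimizer of this objective. The minimizer of a strongly convex function is unique, so $\xs$ is unique, and the second condition $\zs=\xs$ then forces $\zs$ to be unique as well. The only point to state is that $\partial({\myblue f}+{\mygreen g})=\nabla{\myblue f}+\partial{\mygreen g}$. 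This holds because ${\myblue f}$ is differentiable on all of $\R^{d_x}$, and it identifies the inclusion with the first-order optimality condition.

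\emph{Rate.} With $\eta_x=1/\sqrt{L_{\myblue f}\mu_{\mygreen g}}$ and $\eta_z=\sqrt{\mu_{\mygreen g}/L_{\myblue f}}$ we get $L_{\myblue f}\eta_x\eta_z=1$, so the stepsize condition of Theorem~\ref{th:apgd_apge_conv} holds with equality. Set $q\eqdef\sqrt{\mu_{\mygreen g}/L_{\myblue f}}$. Then $\mu_{\mygreen g}\eta_x=q$ and $\eta_z=q$. Hence
\begin{equation*}
\theta=\max\left\{\frac{1}{1+q},\frac{2}{2+q}\right\}=\frac{1}{1+q/2}.
\end{equation*}
Theorem~\ref{th:apgd_apge_conv} gives $\Phi_k\le\theta^k\Phi_0$, so $\Phi_k\le\varepsilon\Phi_0$ holds as soon as $k\ge \log(1/\varepsilon)/\log(1+q/2)$.

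To bound this, use the elementary inequality $\log(1+t)\ge t/(1+t)$ for $t>0$. It gives $1/\log(1+q/2)\le 1+2/q$. Therefore
\begin{equation*}
k=\left\lceil\left(1+2\sqrt{L_{\myblue f}/\mu_{\mygreen g}}\right)\log\tfrac{1}{\varepsilon}\right\rceil
\end{equation*}
iterations suffice, which is $\cO\big((1+\sqrt{L_{\myblue f}/\mu_{\mygreen g}})\log\frac1\varepsilon\big)$. This inequality also covers the regime $\mu_{\mygreen g}>L_{\myblue f}$, where $q$ is large and the count is dominated by the constant term.

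\emph{Main obstacle.} The argument is routine; the only points needing care are the nonnegativity of $\Phi_k$ and the degenerate case $\Phi_0=0$. For the statement "$\Phi_k\le\varepsilon\Phi_0$" as written, neither matters. If one wants $\Phi_k$ to be a meaningful measure of distance, nonnegativity of the cross term follows from $\eta_x\eta_z L_{\myblue f}\le 1$ combined with Young's inequality and the bound $\|\nabla {\myblue f}(z)-\nabla{\myblue f}(\zs)\|^2\le 2L_{\myblue f}D_{\myblue f}(z;\zs)$. This remark is not required for the corollary.
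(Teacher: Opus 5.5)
Your proposal is correct and follows the paper's route for the rate: substitute the stepsizes into $\theta$ and bound $1/(1-\theta)$, which is exactly what your estimate $\log(1+t)\ge t/(1+t)$ does. Your strong-convexity argument for uniqueness also fills in a claim the paper states but never proves. One small correction: nonnegativity of $\Phi_0$ (Lemma~\ref{lem:lyaponov_func_apgd_apge}, i.e.\ the Young-plus-smoothness argument you sketch) is genuinely needed to pass from $\theta^k\le\varepsilon$ to $\theta^k\Phi_0\le\varepsilon\Phi_0$, so it is not irrelevant as your last paragraph claims.
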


The proof of Corollary~\ref{cor:apgd_apge_complexity} can be found in Appendix~\ref{sec:proof_of_cor_apgd_apde_complexity}.

As shown in Corollary~\ref{cor:apgd_apge_complexity}, Algorithm~\ref{alg:APGD_APGE} achieves the optimal accelerated rate in the strongly convex case, i.e., when $\mu_{\mygreen g} > 0$. In the case where ${\myblue f}$ is $\mu_{{\myblue f}}$-strongly convex, we can transfer the strong convexity by adding $\frac{\mu_{\myblue f}}{2}\|\cdot\|^2$ to ${\mygreen g}$, defining ${\mygreen \tilde g}(\cdot) \eqdef {\mygreen g}(\cdot) +  \frac{\mu_{\myblue f}}{2}\|\cdot\|^2$, and correspondingly subtracting it from ${\myblue f}$, defining ${\myblue \tilde{f}}(x) \eqdef {\myblue f}(x) - \frac{\mu_{\myblue f}}{2}\|x\|^2$. Consequently, line $3$ of \algname{APGD} and line $4$ of \algname{APGE} in Algorithm~\ref{alg:APGD_APGE} can be rewritten as
\begin{equation*}
    x^{k+1} =\prox_{\eta_x {\mygreen \tilde g}}\left(x^k- \eta_x\nabla {\myblue \tilde{f}} (z^k)  \right) \quad \text{and} \quad x^{k+1} =\prox_{\eta_x {\mygreen \tilde g}}\left(x^k- \eta_x\big(2\nabla {\myblue \tilde{f}} (z^{k+1}) - \nabla {\myblue \tilde{f}} (z^k)\big)  \right),
\end{equation*}
respectively. Applying Corollary~\ref{cor:apgd_apge_complexity} to problem~\eqref{eq:main_prob_apgd} using the modified functions ${\myblue \tilde f}$ and ${\mygreen \tilde g}$, we obtain the following convergence rate for Algorithm~\ref{alg:APGD_APGE}:
\begin{equation*}
    \cO\left(\sqrt{\frac{L_{\myblue f}}{\mu_{\mygreen g}+ \mu_{\myblue f}}}\log\frac{1}{\varepsilon} \right).
\end{equation*}
If $\mu_{\mygreen g} = 0$, this modification of Algorithm~\ref{alg:APGD_APGE} successfully achieves the optimal convergence rate of $\cO\Big(\sqrt{\frac{L_{\myblue f}}{\mu_{\myblue f}}}\log\frac{1}{\varepsilon}\Big)$.

\section{Four New Accelerated Primal--Dual Methods}
\label{sec:ACV_and_APDTR}

In Section~\ref{sec:APGD_and_APGE} we presented \emph{``the recipe of acceleration''}; that is, how to derive accelerated algorithms based on a  saddle-point reformulation and applying the Chambolle--Pock Algorithm to this problem. In this section, we apply the same recipe to problem \eqref{eq:main_saddle_point_reformulation} and derive four new accelerated algorithms.

Applying Chambolle--Pock Algorithm Form I and Form II to problem \eqref{eq:main_saddle_point_reformulation}, we obtain the following two methods:

\vspace{-0.5em}
\noindent
\begin{minipage}{0.41\textwidth}
\begin{align}
    x^{k+1} &= \prox_{\eta_x {\mygreen g}}\left(x^k- \eta_x \mathsf{K}^*y^k -\eta_xu^k \right);\notag\\
    y^{k+1} &= \prox_{\eta_y {\hstar}}\left(y^k+\eta_y \mathsf{K} (2x^{k+1}- x^k)\right); \notag\\
    u^{k+1} &= \prox_{\eta_u {\myblue f^*}}\left(u^k+\eta_u (2x^{k+1}- x^k)\right); \notag
\end{align}
\end{minipage}%
\hfill\vrule\hfill
\begin{minipage}{0.57\textwidth}
\begin{align}
    y^{k+1} &= \prox_{\eta_y {\hstar}}\left(y^k+\eta_y \mathsf{K} x^k\right); \notag\\
    u^{k+1} &= \prox_{\eta_u {\myblue f^*}}\left(u^k+\eta_u x^k\right);\notag\\
    x^{k+1} &= \prox_{\eta_x {\mygreen g}}\left(x^k- \eta_x\mathsf{K}^*(2y^{k+1}- y^k) - \eta_x(2u^{k+1}- u^k)\right).\notag
\end{align}
\end{minipage}

Using the same transformation for $u$-update as we did for \algname{APGD} and \algname{APGE} (see Section~\ref{sec:APGD_and_APGE}), we derive two new accelerated methods: 

\vspace{-0.5em}
\noindent
\begin{minipage}{0.48\textwidth}
\begin{subequations}
\label{eq:acv_main}
\begin{align}
    x^{k+1} &= \prox_{\eta_x {\mygreen g}}\left(x^k- \eta_x \mathsf{K}^*y^k -\eta_x \nabla {\myblue f}(z^k) \right);\label{eq:acv_main_x}\\
    y^{k+1} &= \prox_{\eta_y {\hstar}}\left(y^k+\eta_y \mathsf{K} (2x^{k+1}- x^k)\right); \label{eq:acv_main_y}\\
    z^{k+1} &= z^k - \eta_z\left( z^{k+1} - (2x^{k+1}- x^k)\right); \label{eq:acv_main_z}
\end{align}
\end{subequations}
\end{minipage}%
\hfill\vrule\hfill
\begin{minipage}{0.48\textwidth}
\begin{subequations}
\label{eq:apdtr_main} 
\begin{align}
    y^{k+1} &= \prox_{\eta_y {\hstar}}\left(y^k+\eta_y \mathsf{K} x^k\right); \label{eq:apdtr_y}\\
    z^{k+1} &= z^k - \eta_z\left( z^{k+1} - x^k\right); \label{eq:apdtr_z}\\
    x^{k+1} &= \prox_{\eta_x {\mygreen g}}\big(x^k- \eta_x\mathsf{K}^*\left(2y^{k+1}- y^k\right) \notag\\
    &\qquad- \eta_x\left(2\nabla {\myblue f}(z^{k+1}) - \nabla {\myblue f}(z^{k})\right)\big). \label{eq:apdtr_x}
\end{align}
\end{subequations}
\end{minipage}

First, consider the method defined in \eqref{eq:acv_main}. It is worth noting that if we swap the order of $x$- and $y$-updates (defined in \eqref{eq:acv_main_x} and \eqref{eq:acv_main_y}, respectively), we get another method, but we need to add extrapolation step in $y$-update. Such manipulations allow us to derive Accelerated Condat--V\~u Algorithm: Form I \& II (\algname{ACV-I} \& \algname{ACV-II}), which are given in Algorithm~\ref{alg:ACV}.

\begin{algorithm}[ht!]
\caption{Accelerated Condat--V\~u Algorithm: Form I \& II (\algname{ACV-I} \& \algname{ACV-II}) }
\label{alg:ACV}
    \begin{tabular}{@{}p{0.47\linewidth} | p{0.49\linewidth}@{}}
        \vspace{-0.75em} 
        \begin{algorithmic}[1]
        \State \textbf{Input:} Stepsizes $\eta_x, \eta_y, \eta_z > 0$.
        \For{$k = 0, 1, 2, \dots$}
            \State $x^{k+1} = \prox_{\eta_x {\mygreen g}} \left(x^k - \eta_x \mathsf{K}^* y^k - \eta_x \nabla {\myblue f}(z^k)\right)$
            \State $y^{k+1} = \prox_{\eta_y {\hstar}} \left(y^k + \eta_y \mathsf{K}(2x^{k+1} - x^k)\right)$
            \State $z^{k+1} = \frac{1}{1+\eta_z} z^k + \frac{\eta_z}{1+\eta_z}  \left(2x^{k+1} - x^k\right)$
        \EndFor
        \end{algorithmic}
        &
        \vspace{-0.75em}
        \begin{algorithmic}[1]
        \State \textbf{Input:} Stepsizes $\eta_x, \eta_y, \eta_z > 0$.
        \For{$k = 0, 1, 2, \dots$}
            \State $y^{k+1} = \prox_{\eta_y \hstar } \left(y^k + \eta_y \mathsf{K} x^k\right)$
            \State $\begin{aligned}[t]
                x^{k+1} = \prox_{\eta_x \mygreen g} \big(x^k - \eta_x \mathsf{K}^*(2&y^{k+1} - y^k) \\
                &- \eta_x \nabla {\myblue f}(z^k)\big)
            \end{aligned}$
            \State $z^{k+1} = \frac{1}{1+\eta_z} z^k + \frac{\eta_z}{1+\eta_z}  \left(2x^{k+1} - x^k\right)$
        \EndFor
        \end{algorithmic}\end{tabular}%
\end{algorithm}

Algorithm~\ref{alg:ACV} is indeed an accelerated version of Condat--V\~u Algorithm Forms I and II (\algname{CV-I} and \algname{CV-II}) 
\citep{condat2023proximal}. To demonstrate this exactly, we set $z^0 = x^0$ and $\eta_z = 1$ as we did for \algname{APGD} Algorithm~\ref{alg:APGD_APGE}. Then we have 
$$
    z^{k+1} = z^k - (z^{k+1} - (2x^{k+1}-x^k)) \quad \Rightarrow \quad 2z^{k+1}-z^k = 2x^{k+1} - x^k.
$$
By induction starting from $z^0 = x^0$, we have $z^k = x^k$ for all $k \geq 0$. Thus, the update rule for $x^{k+1}$ becomes
\begin{eqnarray*}
    \text{\algname{CV-I}}: && x^{k+1} = \prox_{\eta_x {\mygreen g}}\left(x^k- \eta_x \mathsf{K}^*y^k -\eta_x\nabla {\myblue f} (x^k)\right);\\
    \text{\algname{CV-II}}: && x^{k+1} = \prox_{\eta_x {\mygreen g}}\left(x^k- \eta_x \mathsf{K}^*(2y^{k+1}-y^k) -\eta_x\nabla {\myblue f} (x^k)\right),
\end{eqnarray*}
which are exactly the update rules for \algname{CV-I} and \algname{CV-II}.

Recently, \citet{driggs2024practical} proposed the Accelerated Condat--V\~u algorithm (\algname{ACV}), which is based on Nesterov's acceleration. The main differences between our methods and theirs are in the direct application of Nesterov's momentum and extrapolation step. In our methods the acceleration is derived from a primal--dual perspective, while \citet{driggs2024practical} incorporate Nesterov's momentum to \algname{CV} directly. Also, in our case we set the extrapolation parameter to $1$.

We now consider the method represented as  \eqref{eq:apdtr_main}. Again, similarly to the previous case, if we swap the $x$-sequence \eqref{eq:apdtr_x} and $y$-sequence \eqref{eq:apdtr_y}, we derive another form of the method. Thus, we have derived two new methods, which we call Accelerated Primal--Dual Twice Reflected Algorithms, Forms I and II (\algname{APDTR-I} and \algname{APDTR-II}).

\begin{algorithm}[ht!]
\caption{Accelerated Primal--Dual Twice Reflected Algorithm: Form I \& II (\algname{APDTR-I} \& \algname{APDTR-II}) }
\label{alg:APDTR}
\begin{tabular}{@{}p{0.48\linewidth} | p{0.48\linewidth}@{}}
        \vspace{-0.75em} 
        \begin{algorithmic}[1]
        \State \textbf{Input:} Stepsizes $\eta_x, \eta_y, \eta_z > 0$.
        \For{$k = 0, 1, 2, \dots$}
            \State $z^{k+1} = \frac{1}{1+\eta_z}z^k +\frac{\eta_z}{1+\eta_z} x^k$
            \State 
            $\begin{aligned}[t]
                x^{k+1} = \prox_{\eta_x {\mygreen g}} &\big(x^k - \eta_x \mathsf{K}^* y^k \\
                &-\eta_x(2\nabla {\myblue f}(z^{k+1}) - \nabla {\myblue f}(z^{k}))\big)
            \end{aligned}$
            \State $y^{k+1} = \prox_{\eta_y {\hstar}} \left(y^k + \eta_y \mathsf{K}(2x^{k+1} - x^k)\right)$
        \EndFor
        \end{algorithmic}
        &
        \vspace{-0.75em}
        \begin{algorithmic}[1]
		\State \textbf{Input:} Stepsizes $\eta_x, \eta_y, \eta_z > 0$.
		\For{ $k =  0, 1, 2, \ldots$}
		    \State $y^{k+1} = \prox_{\eta_y\hstar}\left(y^k +\eta_y \mathsf{K} x^k\right)$
            \State $z^{k+1} = \frac{1}{1+\eta_z}z^k +\frac{\eta_z}{1+\eta_z} x^k$
            \State 
            $\begin{aligned}[t]
                x^{k+1} = \prox_{\eta_x \mygreen g} \big(&x^k - \eta_x \mathsf{K}^*(2y^{k+1} - y^k) \\
                &- \eta_x(2\nabla {\myblue f}(z^{k+1})- \nabla {\myblue f}(z^{k}))\big)
            \end{aligned}$
		\EndFor
	\end{algorithmic}
    \end{tabular}	
\end{algorithm}

\citet{malitsky2026first} introduced the Primal--Dual Twice Reflected method (\algname{PDTR}) for monotone inclusion problems with three operators, which is a generalization of problem \eqref{eq:main_opt_prob}. Precisely, \algname{APDTR-I} is an accelerated version of \algname{PDTR} (in our notation, it is the Primal--Dual Twice Reflected Algorithm Form I (\algname{PDTR-I})).
To show  Algorithm~\ref{alg:APDTR} is indeed an accelerated version of Primal--Dual Twice Reflected Splitting Algorithm Forms I and II (\algname{PDTR-I} and \algname{PDTR-II}), we first set $\eta_z = \frac{1}{\alpha_z -1}$, where $\alpha_z > 1$ is a constant, and, if $\alpha_z \to 1$, then $\eta_z \to \infty$. This is a formal limiting relation showing algorithmic ancestry; it is not covered by the stepsize regime in the convergence theorems below. Then, we have the following update rule for $z^{k+1}$:
$$z^{k+1} = z^k - \eta_z(z^{k+1} - x^k) \quad \Rightarrow \quad z^{k+1} = \frac{1}{1+\eta_z}z^k + \frac{\eta_z}{1+\eta_z} x^k = \left(1-\frac{1}{\alpha_z}\right)z^k + \frac{1}{\alpha_z}x^k.$$
Therefore, if $\alpha_z \to 1$ and $z^0 = x^{-1}$, then $z^{k+1} \to x^k$ and 
\begin{eqnarray*}
    \text{\algname{PDTR-I}}: &&  x^{k+1} =\prox_{\eta_x {\mygreen g}}\left(x^k - \eta_x\mathsf{K}^*y^k - \eta_x\left(2\nabla {\myblue f} (x^k) - \nabla {\myblue f} (x^{k-1})\right)\right);\\
    \text{\algname{PDTR-II}}: && x^{k+1} =\prox_{\eta_x {\mygreen g}}\left(x^k - \eta_x\mathsf{K}^*(2y^{k+1}-y^k) - \eta_x\left(2\nabla {\myblue f} (x^k) - \nabla {\myblue f} (x^{k-1})\right)\right),
\end{eqnarray*}
which are exactly the update rules for \algname{PDTR-I} and \algname{PDTR-II}.

We now study convergence of four new accelerated methods: \algname{ACV-I} and  \algname{ACV-II}, \algname{APDTR-I} and  \algname{APDTR-II}. First, we introduce a Lyapunov function for those methods:
\begin{eqnarray*}
        \Psi_{k} &=& \frac{1}{2\eta_x}\|x^{k}-\xs\|^2 + \frac{1}{2\eta_y}\|y^{k}-\ys\|^2 + \frac{1}{\eta_z} D_{{\myblue f}}(z^{k};\zs)\\
        && \pm \la y^{k}-\ys, \mathsf{K}(x^{k}-\xs)\ra  \pm \la \nabla {\myblue f}(z^{k}) - \nabla{\myblue f}(\zs), x^{k} - \xs\ra,
    \end{eqnarray*}
    where: 
    \begin{itemize}
        \item For \algname{ACV-I}, the first and second $\pm$ signs are $-$, $-$, respectively;
        \item For \algname{ACV-II}, the first and second $\pm$ signs are $+$, $-$, respectively;
        \item For \algname{APDTR-I}, the first and second $\pm$ signs are $-$, $+$, respectively;
        \item For \algname{APDTR-II}, the first and second $\pm$ signs are $+$, $+$, respectively.
    \end{itemize}
The structure of the Lyapunov function is the same for all four methods, the only difference being these two signs. 

\subsection{Smooth Regime} 
We start with the case where ${\myred h}$ is $\nicefrac{1}{\mu_{\hstar}}$-smooth for some $\mu_{\hstar}>0$ (by Lemma~\ref{lem:strongly_convex_conjugate_smoothness},~$\hstar$ is $\mu_{\hstar}$-strongly convex). Under strong convexity of ${\mygreen g}$, 
we provide linear convergence guarantees for \algname{ACV-I} and  \algname{ACV-II}, \algname{APDTR-I} and  \algname{APDTR-II}.

\begin{theorem}
\label{th:convergence_ACV_and_APDTR}
    Let $(\xs,\ys,\zs)$ be a solution of \eqref{eq:main_optimality_condition}, and suppose that ${\mygreen g}$ is $\mu_{{\mygreen g}}$-strongly convex for some $\mu_{{\mygreen g}}\ge 0$ and $\hstar$ is $\mu_{\hstar}$-strongly convex for some $\mu_{\hstar}>0$.
    Then if $\|\mathsf{K}\|^2\eta_x\eta_y +L_{{\myblue f}}\eta_x\eta_z \leq 1$, the iterates generated by  Algorithm~\ref{alg:ACV} or  Algorithm~\ref{alg:APDTR} satisfy
    \begin{equation}
        \Psi_k \leq \theta^k \Psi_0,\quad \text{where } \theta = \max\left\{\frac{1}{1+\mu_{{\mygreen g}}\eta_x}, \frac{1}{1+\mu_{\hstar}\eta_y}, \frac{2}{2+\eta_z}\right\}.
    \end{equation}
\end{theorem}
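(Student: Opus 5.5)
We prove a one-step contraction $\Psi_{k+1}\le\theta\Psi_k$ for each of the four methods and then iterate. We treat the three blocks $x$, $y$, $z$ separately, exactly as in the proof of Theorem~\ref{th:apgd_apge_conv}; the $y$-block is a new copy of the $z$-block with the Euclidean metric in place of the Bregman one.

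\textbf{Per-block inequalities.} For the $x$-update, the optimality condition of the prox together with $\mu_{\mygreen g}$-strong convexity of ${\mygreen g}$ (monotonicity of $\partial{\mygreen g}$ against the first line of \eqref{eq:main_optimality_condition}) gives
\[
\tfrac{1}{2\eta_x}\|x^k-\xs\|^2 \ge \big(\tfrac{1}{2\eta_x}+\tfrac{\mu_{\mygreen g}}{2}\big)\|x^{k+1}-\xs\|^2 + \tfrac{1}{2\eta_x}\|x^{k+1}-x^k\|^2 + \la \mathsf{K}^*(\tilde y-\ys) + (\tilde u-\nabla{\myblue f}(\zs)), x^{k+1}-\xs\ra .
\]
Here $\tilde y$ is $y^k$ or $2y^{k+1}-y^k$, and $\tilde u$ is $\nabla{\myblue f}(z^k)$ or $2\nabla{\myblue f}(z^{k+1})-\nabla{\myblue f}(z^k)$, according to the method.

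For the $y$-update, $\mu_{\hstar}$-strong convexity of $\hstar$ gives the analogous inequality with the coupling $-\la \mathsf{K}(\tilde x-\xs), y^{k+1}-\ys\ra$.

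For the $z$-update, write $z^{k+1}-z^k=-\eta_z(z^{k+1}-\tilde x)$. Combine the three-point identity for $D_{\myblue f}$,
\[
D_{\myblue f}(z^k;\zs)=D_{\myblue f}(z^{k+1};\zs)+D_{\myblue f}(z^k;z^{k+1})+\la\nabla{\myblue f}(z^{k+1})-\nabla{\myblue f}(\zs),z^k-z^{k+1}\ra ,
\]
with $\la\nabla{\myblue f}(z^{k+1})-\nabla{\myblue f}(\zs),z^{k+1}-\zs\ra\ge D_{\myblue f}(z^{k+1};\zs)+\tfrac{1}{2L_{\myblue f}}\|\nabla{\myblue f}(z^{k+1})-\nabla{\myblue f}(\zs)\|^2$ (convexity plus smoothness). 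This yields the $\frac{2}{2+\eta_z}$ factor: $\frac1{\eta_z}D(z^k)\ge(\frac1{\eta_z}+\frac12)D(z^{k+1})+\dots$, keeping half of the cocoercivity term. It also yields the coupling $\la \nabla{\myblue f}(z^{k+1})-\nabla{\myblue f}(\zs), \tilde x-\xs\ra$, using $\zs=\xs$.

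\textbf{Assembling.} Sum the three inequalities. The extrapolated choices ($2x^{k+1}-x^k$, $2y^{k+1}-y^k$, $2\nabla{\myblue f}(z^{k+1})-\nabla{\myblue f}(z^k)$) make the cross terms telescope. What remains at step $k+1$ is $\pm\la y^{k+1}-\ys,\mathsf{K}(x^{k+1}-\xs)\ra$ and $\pm\la\nabla{\myblue f}(z^{k+1})-\nabla{\myblue f}(\zs),x^{k+1}-\xs\ra$, with the signs in the definition of $\Psi$, minus the same expressions at step $k$. This is exactly the sign bookkeeping of the four cases, and it is done once per form: the Form I/II choice for $y$ decides the first sign, and the ACV/APDTR choice for $z$ decides the second. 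Leftover cross terms involving increments are of the forms $\la y^{k+1}-y^k,\mathsf{K}(x^{k+1}-x^k)\ra$ and $\la\nabla{\myblue f}(z^{k+1})-\nabla{\myblue f}(z^k),x^{k+1}-x^k\ra$, or in APDTR the corresponding term with $x^k-x^{k-1}$ absorbed into $\Psi$. Bound them by Young's inequality with weights $\|\mathsf{K}\|^2\eta_y$ and $L_{\myblue f}\eta_z$ against $\frac1{2\eta_x}\|x^{k+1}-x^k\|^2$, $\frac1{2\eta_y}\|y^{k+1}-y^k\|^2$, and the increment Bregman/cocoercive terms. The condition $\|\mathsf{K}\|^2\eta_x\eta_y+L_{\myblue f}\eta_x\eta_z\le1$ is exactly what makes these absorptions fit within the single budget $\frac{1}{2\eta_x}\|x^{k+1}-x^k\|^2$.

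\textbf{Contraction.} The result is
\[
\Psi_k\ge \Big(\tfrac{1}{2\eta_x}+\tfrac{\mu_{\mygreen g}}2\Big)\|x^{k+1}-\xs\|^2+\Big(\tfrac1{2\eta_y}+\tfrac{\mu_{\hstar}}2\Big)\|y^{k+1}-\ys\|^2+\Big(\tfrac1{\eta_z}+\tfrac12\Big)D_{\myblue f}(z^{k+1};\zs)\pm(\text{cross terms at }k+1).
\]
The main obstacle is turning this into $\ge\theta^{-1}\Psi_{k+1}$, because the cross terms are not scaled by $\theta^{-1}$. We handle this as in Theorem~\ref{th:apgd_apge_conv}. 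Since $\theta\ge$ each block factor, it suffices to show that $\Psi_{k+1}-(\text{cross terms})\cdot(1-\theta)$-type remainders are nonnegative. This needs $\Psi_{k+1}$ to be nonnegative, with its cross terms dominated by its diagonal terms. Use Young once more, $|\la y-\ys,\mathsf{K}(x-\xs)\ra|\le\frac{\|\mathsf{K}\|^2\eta_y}{2}\|x-\xs\|^2+\frac1{2\eta_y}\|y-\ys\|^2$, together with $|\la\nabla{\myblue f}(z)-\nabla{\myblue f}(\zs),x-\xs\ra|\le \frac{L_{\myblue f}\eta_z}{2}\|x-\xs\|^2+\frac{1}{2L_{\myblue f}\eta_z}\|\nabla{\myblue f}(z)-\nabla{\myblue f}(\zs)\|^2$, and $\frac1{2L_{\myblue f}}\|\nabla{\myblue f}(z)-\nabla{\myblue f}(\zs)\|^2\le D_{\myblue f}(z;\zs)$. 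Then the stepsize condition gives $\Psi\ge0$, with enough slack (the retained half-cocoercivity and increment terms) to carry the cross terms. I expect this careful weighting of the cross terms, identical in spirit to the APGD case, to be the delicate step. Once it is done, iterate $\Psi_{k+1}\le\theta\Psi_k$ to conclude.
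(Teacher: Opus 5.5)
Your block inequalities, the telescoping of the coupling terms, and the Young absorption of $\la y^{k+1}-y^k,\mathsf{K}(x^{k+1}-x^k)\ra$ and $\la\nabla f(z^{k+1})-\nabla f(z^k),x^{k+1}-x^k\ra$ under $\|\mathsf{K}\|^2\eta_x\eta_y+L_f\eta_x\eta_z\le 1$ match the paper's descent lemmas. (In APDTR the residual is this same term. No $x^{k-1}$ appears, and $\Psi$ has no slot to absorb one.)

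The gap is in the contraction step, which you leave open and which does not close as you set it up. Write $\Psi_{k+1}=\Delta_{k+1}+C_{k+1}$, where $\Delta_{k+1}$ is the three diagonal terms and $C_{k+1}$ the signed cross terms. You halved the gains ($\mu_g/2$, $\mu_{h^*}/2$, and coefficient $\tfrac12$ on $D_f(z^{k+1};\zs)$) so that they give just $\theta^{-1}\Delta_{k+1}$; when the three block ratios coincide, no $x$- or $y$-slack is left. Your assembled bound is then $\Psi_k\ge\theta^{-1}\Delta_{k+1}+C_{k+1}+R$, and you need $R\ge(\theta^{-1}-1)C_{k+1}$.

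But $C_{k+1}$ can be positive through $\mp\la y^{k+1}-\ys,\mathsf{K}(x^{k+1}-\xs)\ra$, and $R$ cannot pay for it:
\begin{itemize}
\item the increment terms are consumed by the Young step (entirely, when the stepsize condition is tight) and involve only differences of consecutive iterates;
\item the retained half-cocoercivity involves only $z^{k+1}$.
\end{itemize}
Nothing left over bounds $\|x^{k+1}-\xs\|$ or $\|y^{k+1}-\ys\|$, and these are exactly what the Young bounds on both cross terms require. Nonnegativity of $\Psi_{k+1}$, or of $\Psi_{k+1}-(1-\theta)C_{k+1}$, is not the condition that matters; the reduction you sketch there is not a valid one.

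The paper's route is short.
\begin{itemize}
\item Keep the full gains. Strong monotonicity of $\partial g$ and $\partial h^*$ gives the full $\mu_g\|x^{k+1}-\xs\|^2$ and $\mu_{h^*}\|y^{k+1}-\ys\|^2$, and the $z$-block gives $D_f(z^{k+1};\zs)$ with coefficient $1$. Hence $\Psi_k-\Psi_{k+1}\ge\mu_g\|x^{k+1}-\xs\|^2+\mu_{h^*}\|y^{k+1}-\ys\|^2+D_f(z^{k+1};\zs)$.
\item Use your Young inequalities for the upper bound $|C_{k+1}|\le\Delta_{k+1}$, i.e.\ $\Psi_{k+1}\le\frac{1}{\eta_x}\|x^{k+1}-\xs\|^2+\frac{1}{\eta_y}\|y^{k+1}-\ys\|^2+\frac{2}{\eta_z}D_f(z^{k+1};\zs)$, rather than only for $\Psi\ge0$.
\item The gain is then at least $\min\{\mu_g\eta_x,\mu_{h^*}\eta_y,\eta_z/2\}\,\Psi_{k+1}$, which yields exactly the stated $\theta$.
\end{itemize}
In your framing: one half of the full gain supplies the diagonal factor $\theta^{-1}$, and the other half pays for $(\theta^{-1}-1)|C_{k+1}|\le(\theta^{-1}-1)\Delta_{k+1}$.
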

The proof of Theorem~\ref{th:convergence_ACV_and_APDTR} can be found in Appendix~\ref{sec:proof_of_th_convergence_acv_and_apdtr}. 

We now  state the iteration complexity of Algorithm~\ref{alg:ACV} and  Algorithm~\ref{alg:APDTR}.
\begin{corollary}
    \label{cor:acv_and_apdtr_complexity}
    Under the assumptions of Theorem~\ref{th:convergence_ACV_and_APDTR}, suppose additionally that $\mu_{\mygreen g} > 0$.
    Then the solution $(\xs,\ys,\zs)$ of \eqref{eq:main_optimality_condition} is unique, and for any $\varepsilon >0$ the number of iterations to achieve $\Psi_k \leq \varepsilon \Psi_0$ for  Algorithm~\ref{alg:ACV} and  Algorithm~\ref{alg:APDTR} with the parameter selection 
    \begin{equation*}
        \eta_x = \min\left\{\frac{1}{\sqrt{L_{\myblue f}\mu_{\mygreen g}}}, \sqrt{\frac{\mu_{\hstar}}{\|\mathsf{K}\|^2\mu_{\mygreen g}}}\right\},
        \quad \eta_y = \frac{1}{2}\sqrt{\frac{\mu_{\mygreen g}}{\|\mathsf{K}\|^2\mu_{\hstar}}},
        \quad \eta_z  = \frac{1}{2}\sqrt{\frac{\mu_{\mygreen g}}{L_{\myblue f}}},
    \end{equation*}
    is equal to 
    \begin{equation*}
        \cO\left(\left(\sqrt{\frac{L_{\myblue f}}{\mu_{\mygreen g}}} + \sqrt{\frac{\|\mathsf{K}\|^2}{\mu_{\mygreen g}\mu_{\hstar}}}\right)\log\frac{1}{\varepsilon} \right).
    \end{equation*}
\end{corollary}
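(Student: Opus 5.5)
The corollary has two parts. Uniqueness comes from strong convexity. The iteration count comes from plugging the stepsizes into Theorem~\ref{th:convergence_ACV_and_APDTR}: first check the stepsize condition, then bound the contraction factor $\theta$.

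\textbf{Uniqueness.} Since $\mu_{\mygreen g}>0$, the primal objective ${\myblue f}+{\mygreen g}+{\myred h}\circ\mathsf{K}$ is strongly convex, so $\xs$ is unique. The third optimality condition in \eqref{eq:main_optimality_condition} then forces $\zs=\xs$, which is also unique. Because $\hstar$ is $\mu_{\hstar}$-strongly convex, ${\myred h}$ is differentiable. The second condition gives $\mathsf{K}\xs\in\partial\hstar(\ys)$, i.e.\ $\ys=\nabla{\myred h}(\mathsf{K}\xs)$, so $\ys$ is unique as well.

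\textbf{Stepsize condition.} We need $\|\mathsf{K}\|^2\eta_x\eta_y+L_{\myblue f}\eta_x\eta_z\le 1$. Bound each product using the branch of the minimum defining $\eta_x$ that makes it small:
\begin{align*}
\|\mathsf{K}\|^2\eta_x\eta_y &\le \|\mathsf{K}\|^2\sqrt{\tfrac{\mu_{\hstar}}{\|\mathsf{K}\|^2\mu_{\mygreen g}}}\cdot\tfrac12\sqrt{\tfrac{\mu_{\mygreen g}}{\|\mathsf{K}\|^2\mu_{\hstar}}}=\tfrac12,\\
L_{\myblue f}\eta_x\eta_z &\le L_{\myblue f}\tfrac{1}{\sqrt{L_{\myblue f}\mu_{\mygreen g}}}\cdot\tfrac12\sqrt{\tfrac{\mu_{\mygreen g}}{L_{\myblue f}}}=\tfrac12.
\end{align*}
The sum is at most $1$, so Theorem~\ref{th:convergence_ACV_and_APDTR} applies and gives $\Psi_k\le\theta^k\Psi_0$.

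\textbf{Bounding $\theta$.} Write $\theta=\max_i 1/(1+a_i)$ with
\[
a_1=\mu_{\mygreen g}\eta_x=\min\Big\{\sqrt{\tfrac{\mu_{\mygreen g}}{L_{\myblue f}}},\ \sqrt{\tfrac{\mu_{\mygreen g}\mu_{\hstar}}{\|\mathsf{K}\|^2}}\Big\},\qquad
a_2=\mu_{\hstar}\eta_y=\tfrac12\sqrt{\tfrac{\mu_{\mygreen g}\mu_{\hstar}}{\|\mathsf{K}\|^2}},\qquad
a_3=\tfrac{\eta_z}{2}=\tfrac14\sqrt{\tfrac{\mu_{\mygreen g}}{L_{\myblue f}}}.
\]
Set $a=\min_i a_i$. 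Then $\theta\le 1/(1+a)$, and
\[
\tfrac{1}{a}\le 4\Big(\sqrt{\tfrac{L_{\myblue f}}{\mu_{\mygreen g}}}+\tfrac{\|\mathsf{K}\|}{\sqrt{\mu_{\mygreen g}\mu_{\hstar}}}\Big)\eqdef 4S.
\]
Using $\log(1+a)\ge a/(1+a)$, the condition $\theta^k\le\varepsilon$ holds as soon as
\[
k\ge \tfrac{1+a}{a}\log\tfrac1\varepsilon=\Big(1+\tfrac1a\Big)\log\tfrac1\varepsilon .
\]
This requirement is $\cO\big((1+S)\log\frac1\varepsilon\big)$.

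\textbf{Removing the additive constant.} I would first argue $S\gtrsim 1$, but this does not hold in general. For example, $\mu_{\mygreen g}$ could exceed $L_{\myblue f}$ in principle (since $L_{\myblue f}$ is only an upper bound on the curvature of ${\myblue f}$ and not tied to $\mu_{\mygreen g}$), and then both terms of $S$ can be small. So I would either keep the additive $1$, matching the form of Corollary~\ref{cor:apgd_apge_complexity}, or note that in the regime of interest $S\ge 1$ and absorb the constant. Apart from this presentational point, the proof is routine.
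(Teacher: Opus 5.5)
Your proposal is correct and takes the same route as the paper: it plugs the stepsizes into the contraction factor $\theta$ of Theorem~\ref{th:convergence_ACV_and_APDTR} and bounds $1/(1-\theta)$ via $\max\{1+\frac{1}{\mu_{\mygreen g}\eta_x},\, 1+\frac{1}{\mu_{\hstar}\eta_y},\, 1+\frac{2}{\eta_z}\}$. Your explicit check of $\|\mathsf{K}\|^2\eta_x\eta_y+L_{\myblue f}\eta_x\eta_z\le 1$ and your uniqueness argument supply steps the paper's proof omits, and your caveat about the additive constant is justified, since the paper's own derivation ends at $\cO\big(\max\{1+\sqrt{L_{\myblue f}/\mu_{\mygreen g}},\,1+\|\mathsf{K}\|/\sqrt{\mu_{\mygreen g}\mu_{\hstar}}\}\log\frac{1}{\varepsilon}\big)$ and the statement drops the $1$ without comment.
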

The proof of Corollary~\ref{cor:acv_and_apdtr_complexity} can be found in Appendix~\ref{sec:proof_of_cor_acv_and_apdtr_complexity}.

As we can see in Corollary~\ref{cor:acv_and_apdtr_complexity}, the iteration complexity of all four algorithms is the same and has two accelerated terms. The first term corresponds to the accelerated term with respect to ${\myblue f}$, the second term is the primal--dual coupling. 
Under the standard first-order oracle model for smooth strongly convex-concave bilinear saddle-point problems, this dependence matches known lower bounds up to constants and logarithmic factors, i.e.,
\begin{equation}
    \Omega\left(\left(\sqrt{\frac{L_{\myblue f}}{\mu_{\mygreen g}}} + \sqrt{\frac{\|\mathsf{K}\|^2}{\mu_{\mygreen g}\mu_{\hstar}}}\right)\log\frac{1}{\varepsilon} \right) \notag
\end{equation}
\citep{borodich2025linear}. Each of the terms was proved by \cite{nesterov2013introductory} and \cite{ibrahim2020linear}, respectively. Also, the derived complexity in Corollary~\ref{cor:acv_and_apdtr_complexity}  matches the iteration complexity of \algname{ACV} \citep{driggs2024practical}.

\subsection{Nonsmooth Regime} 
Now, we consider the case when ${\myred h}$ is nonsmooth. In this case, we cannot derive the same iteration complexity as in the smooth case. To see this,
we look at the dual problem to the original problem \eqref{eq:main_opt_prob}:
\begin{equation}
    \label{eq:dual_problem}
    \max_{y \in \R^d_{y}}\left\{ - ({\myblue f} + {\mygreen g})^*(-\mathsf{K}^* y) - \hstar(y)\right\}\quad \Leftrightarrow\quad \min_{y \in \R^d_{y}}\left\{ ({\myblue f} + {\mygreen g})^*(-\mathsf{K}^* y) + \hstar(y)\right\}
\end{equation}

When $\mu_{\mygreen g}>0$ and $\mu_{\hstar}>0$, both the primal and dual problems are strongly convex, which makes it possible for algorithms to converge linearly, as we have established for Algorithm~\ref{alg:ACV} and Algorithm~\ref{alg:APDTR} in Theorem~\ref{th:convergence_ACV_and_APDTR}. When ${\myred h}$ is nonsmooth, 
we need additional conditions on  ${\myblue f}, {\mygreen g}$ and $\mathsf{K}$ to make linear convergence possible. 
Specifically, we want $({\myblue f} + {\mygreen g})^*( - \mathsf{K}^* \cdot)$ to be smooth and strongly convex. For example, if ${\mygreen g} = 0$ (as assumed in several studies \citep{kovalev2020optimal,salim2022optimal,condat2023randprox}), then the assumption $\lambda_{\min}(\mathsf{K}\mathsf{K}^*)>0$ guarantees strong convexity of problem \eqref{eq:dual_problem}. However, when ${\mygreen g} \neq 0$, we have to additionally require smoothness of ${\mygreen g}$. For instance, the case when 
${\mygreen g} = \frac{\mu_{\mygreen g}}{2}\|\cdot\|_2^2$, considered recently in \citet{condat2026nesterov}, is a particular case of our framework.

\begin{theorem}
\label{th:convergence_ACV_and_APDTR_non_smooth}
    Let $(\xs,\ys,\zs)$ be a solution of \eqref{eq:main_optimality_condition}. Suppose that $\lambda_{\min}(\mathsf{K}\mathsf{K}^*) > 0$, ${\mygreen g}$ is  $L_{\mygreen g}$-smooth for some $L_{\mygreen g}>0$ and $\mu_{{\mygreen g}}$-strongly convex for some $\mu_{{\mygreen g}}\ge 0$. Then if $8\|\mathsf{K}\|^2\eta_x\eta_y \leq 1$ and $8L_{{\myblue f}}\eta_x\eta_z \leq 1$, the iterates of Algorithm~\ref{alg:ACV} or  Algorithm~\ref{alg:APDTR} satisfy
    \begin{equation}
        \Psi_k \leq \theta^k \Psi_0,\quad \text{where } \theta = \max\left\{\frac{2}{2+\mu_{{\mygreen g}}\eta_x}, \frac{40}{40+\lambda_{\min}(\mathsf{K}\mathsf{K}^*)\eta_x\eta_y}, \frac{20}{20+\frac{\lambda_{\min}(\mathsf{K}\mathsf{K}^*)}{(L_{\mygreen g}+ L_{\myblue f})}\eta_y}, \frac{2}{2+\eta_z}\right\}.
    \end{equation}
\end{theorem}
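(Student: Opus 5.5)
We will reuse the one-step Lyapunov inequality from the smooth regime (the proof of Theorem~\ref{th:convergence_ACV_and_APDTR}), but with $\mu_{\hstar}=0$. The missing contraction in the $y$-block will be recovered from the optimality condition of the $x$-update, exploiting smoothness of ${\mygreen g}$ and $\lambda_{\min}(\mathsf{K}\mathsf{K}^*)>0$.

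\textbf{Step 1 (base descent inequality).} For each of the four methods we write the prox optimality conditions. For the $x$-step these read $\frac{1}{\eta_x}(x^k-x^{k+1}) - \mathsf{K}^*\tilde y - \tilde g_f = \nabla{\mygreen g}(x^{k+1})$, where $\tilde y\in\{y^k,2y^{k+1}-y^k\}$ and $\tilde g_f\in\{\nabla{\myblue f}(z^k), 2\nabla{\myblue f}(z^{k+1})-\nabla{\myblue f}(z^k)\}$. We also use the $y$-step with $\mu_{\hstar}=0$ and the implicit $z$-step. We test these against $(\xs,\ys,\zs)$ via \eqref{eq:main_optimality_condition}, and combine the three-point identities with $\mu_{\mygreen g}$-strong convexity, with the Bregman inequality $D_{\myblue f}(z;\zs)\ge \frac{1}{2L_{\myblue f}}\|\nabla{\myblue f}(z)-\nabla{\myblue f}(\zs)\|^2$, and with the telescoping cross terms that produce the $\pm$ signs in $\Psi_k$. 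This gives, exactly as in the smooth proof,
\[
\Psi_{k+1} + \tfrac{\mu_{\mygreen g}}{2}\|x^{k+1}-\xs\|^2 + \tfrac{1}{2\eta_z}D_{\myblue f}(z^{k+1};\zs) + R_{k+1} \le \Psi_k ,
\]
where $R_{k+1}$ collects the nonnegative residual terms $\frac{c}{\eta_x}\|x^{k+1}-x^k\|^2$, $\frac{c}{\eta_y}\|y^{k+1}-y^k\|^2$ and $\frac{c}{\eta_z}D_{\myblue f}$-type differences. Here the stronger stepsize conditions $8\|\mathsf{K}\|^2\eta_x\eta_y\le1$ and $8L_{\myblue f}\eta_x\eta_z\le1$ are used, instead of the sum being $\le1$, so that a constant fraction (say half) of each residual survives. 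We also use the conditions to sandwich $\Psi_k$ between constant multiples of $\frac{1}{2\eta_x}\|x-\xs\|^2+\frac{1}{2\eta_y}\|y-\ys\|^2+\frac{1}{\eta_z}D_{\myblue f}(z;\zs)$. For example, $|\la y-\ys,\mathsf{K}(x-\xs)\ra|\le \frac{1}{4\eta_x}\|x-\xs\|^2+\|\mathsf{K}\|^2\eta_x\|y-\ys\|^2$, and the last term is at most $\frac{1}{8\eta_y}\|y-\ys\|^2$.

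\textbf{Step 2 (dual contraction, main obstacle).} We need a term $\gtrsim \lambda_{\min}\eta_x\cdot\frac{1}{\eta_y}\|y^{k+1}-\ys\|^2$, or $\frac{\lambda_{\min}}{L_{\mygreen g}+L_{\myblue f}}\|y^{k+1}-\ys\|^2$. Subtract the optimality condition $-\mathsf{K}^*\ys-\nabla{\myblue f}(\zs)=\nabla{\mygreen g}(\xs)$ from the $x$-step condition to isolate
\[
\mathsf{K}^*(\tilde y-\ys) = \tfrac{1}{\eta_x}(x^k-x^{k+1}) - (\nabla{\mygreen g}(x^{k+1})-\nabla{\mygreen g}(\xs)) - (\tilde g_f-\nabla{\myblue f}(\zs)).
\]
Since $\|\mathsf{K}^*w\|^2\ge\lambda_{\min}(\mathsf{K}\mathsf{K}^*)\|w\|^2$, taking norms and applying Young's inequality (a factor-$4$ or $5$ split) bounds $\lambda_{\min}\|\tilde y-\ys\|^2$ by the following quantities:
\begin{itemize}
\item $\frac{1}{\eta_x^2}\|x^{k+1}-x^k\|^2$, which is a residual in $R_{k+1}$ times $1/\eta_x$;
\item $L_{\mygreen g}\langle\nabla{\mygreen g}(x^{k+1})-\nabla{\mygreen g}(\xs),x^{k+1}-\xs\rangle$, using cocoercivity;
\item $2L_{\myblue f}D_{\myblue f}(z;\zs)$-type terms for the $z$ gradients.
\end{itemize}
We then convert $\tilde y$ back to $y^{k+1}$ via $\|y^{k+1}-y^k\|^2$ residuals. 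The delicate point is that the monotonicity term $\langle\nabla{\mygreen g}(x^{k+1})-\nabla{\mygreen g}(\xs),x^{k+1}-\xs\rangle$ must actually be kept in Step 1 rather than lower-bounded by $\mu_{\mygreen g}\|\cdot\|^2$. We therefore split it in half: one half yields the $\mu_{\mygreen g}$ contraction (hence $\frac{2}{2+\mu_{\mygreen g}\eta_x}$), and the other half is spent via cocoercivity. Likewise, half of the $z$-Bregman gain is spent (hence $\frac{2}{2+\eta_z}$).

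\textbf{Step 3 (assembling the rate).} Multiply the Step 2 inequality by a small weight, of the form $\min\{\eta_x,\frac{1}{L_{\mygreen g}+L_{\myblue f}}\}$ divided by a constant like $20$ or $40$, so that every consumed term is dominated by the surviving halves of $R_{k+1}$, of the ${\mygreen g}$-monotonicity term, and of the Bregman term. This produces a gain $\frac{\lambda_{\min}}{40}\eta_x\cdot\frac{1}{\eta_y}$ or $\frac{\lambda_{\min}}{20(L_{\mygreen g}+L_{\myblue f})}$ in front of $\frac{1}{\eta_y}\|y^{k+1}-\ys\|^2$. Using the Step 1 sandwich, the left-hand side then dominates $\theta^{-1}\Psi_{k+1}$ with $\theta$ the stated maximum, and induction gives $\Psi_k\le\theta^k\Psi_0$. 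We expect the constant bookkeeping in Steps 2 and 3, done uniformly over the four sign patterns, to be the main difficulty.
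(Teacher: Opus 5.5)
Your proposal is correct and follows essentially the same route as the paper. There too the monotonicity term of ${\mygreen g}$ is split into $\frac{\mu_{\mygreen g}}{2}\|x^{k+1}-\xs\|^2+\frac{1}{2L_{\mygreen g}}\|\nabla{\mygreen g}(x^{k+1})-\nabla{\mygreen g}(\xs)\|^2$, the $y$- and $z$-descent lemmas are used with $a_y=a_z=\frac12$ and $\mu_{\hstar}=0$, and a $\delta$-fraction of the residual $\frac{1}{2\eta_x}\|x^{k+1}-x^k\|^2$ is rewritten through the $x$-update and lower-bounded by a five-term Young split to extract $\frac{\delta\eta_x\lambda_{\min}(\mathsf{K}\mathsf{K}^*)}{10}\|y^{k+1}-\ys\|^2$; the quantity $\delta\eta_x=\min\{\frac{\eta_x}{2},\frac{1}{L_{\mygreen g}+L_{\myblue f}}\}$ plays the role of your weight.

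One bookkeeping correction. The $z$-step yields $D_{\myblue f}(z^{k+1};\zs)+D_{\myblue f}(\zs;z^{k+1})$ with unit coefficients, not $\frac{1}{2\eta_z}$. It is the reverse divergence $D_{\myblue f}(\zs;z^{k+1})$ that should be spent on $\|\nabla{\myblue f}(z^{k+1})-\nabla{\myblue f}(\zs)\|^2$, which leaves all of $D_{\myblue f}(z^{k+1};\zs)$ available for the factor $\frac{2}{2+\eta_z}$.
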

The proof of Theorem~\ref{th:convergence_ACV_and_APDTR_non_smooth} can be found in Appendix~\ref{sec:proof_convergence_acv_apdtr_non_smooth}.

\begin{corollary}
    \label{cor:acv_and_apdtr_complexity_non_smooth}
    Under assumptions of Theorem~\ref{th:convergence_ACV_and_APDTR_non_smooth}, suppose additionally that  $ L_{\mygreen g} \geq\mu_{\mygreen g} > 0$. Then the solution $(\xs,\ys,\zs)$ of \eqref{eq:main_optimality_condition} is unique, and for any $\varepsilon >0$ the number of iterations to achieve $\Psi_k \leq \varepsilon \Psi_0$ for Algorithm~\ref{alg:ACV} and Algorithm~\ref{alg:APDTR} solving problem \eqref{eq:main_opt_prob} with a parameter selection 
    \begin{equation*}
        \eta_x = \min\left\{\frac{1}{\sqrt{L_{\myblue f}\mu_{\mygreen g}}}, \sqrt{\frac{\lambda_{\min}(\mathsf{K}\mathsf{K}^*)}{\|\mathsf{K}\|^2}}\cdot \frac{1}{\sqrt{(L_{\mygreen g}+L_{\myblue f})\mu_{\mygreen g}}}\right\},
        \quad \eta_y = \frac{1}{8 \|\mathsf{K}\|^2 \eta_x},
        \quad \eta_z  = \frac{1}{8L_{\myblue f}\eta_x},
    \end{equation*}
    is equal to 
    \begin{equation*}
        \cO\left(\left(\sqrt{\frac{L_{\myblue f} + L_{\mygreen g}}{\mu_{\mygreen g}} } \cdot \frac{\|\mathsf{K}\|}{\sqrt{\lambda_{\min}(\mathsf{K}\mathsf{K}^*)}} + \frac{\|\mathsf{K}\|^2}{\lambda_{\min}(\mathsf{K}\mathsf{K}^*)}\right)\log\frac{1}{\varepsilon}\right).
    \end{equation*}
\end{corollary}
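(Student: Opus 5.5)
The plan is to derive the corollary directly from Theorem~\ref{th:convergence_ACV_and_APDTR_non_smooth}, in three steps: prove uniqueness, check the stepsize conditions, then bound $1/(1-\theta)$ term by term.

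\emph{Uniqueness.} Since $\mu_{\mygreen g}>0$, the primal objective ${\myblue f}+{\mygreen g}+{\myred h}\circ\mathsf{K}$ is strongly convex, so $\xs$ is unique, and hence $\zs=\xs$ is unique. For $\ys$, the first line of \eqref{eq:main_optimality_condition} gives $\mathsf{K}^*\ys=-\nabla{\mygreen g}(\xs)-\nabla{\myblue f}(\xs)$, using that ${\mygreen g}$ is differentiable. Because $\lambda_{\min}(\mathsf{K}\mathsf{K}^*)>0$, $\mathsf{K}^*$ is injective, so $\ys$ is determined uniquely.

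\emph{Stepsize conditions.} The choices $\eta_y=1/(8\|\mathsf{K}\|^2\eta_x)$ and $\eta_z=1/(8L_{\myblue f}\eta_x)$ make both conditions of the theorem hold with equality, so the theorem applies. It yields $\Psi_k\le\theta^k\Psi_0$. Writing $\theta=\max_i 1/(1+a_i)$, the standard bound $\log(1/\theta)\ge \min_i a_i/(1+a_i)$ shows that $k=\cO\big(\max_i(1+1/a_i)\log\frac1\varepsilon\big)$ iterations suffice. It therefore remains to bound each $1/a_i$, using the abbreviations $\lambda=\lambda_{\min}(\mathsf{K}\mathsf{K}^*)$ and $L=L_{\mygreen g}+L_{\myblue f}$.

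\emph{The four terms.}
\begin{itemize}
\item $a_1=\mu_{\mygreen g}\eta_x/2$. Here $1/(\mu_{\mygreen g}\eta_x)=\max\big\{\sqrt{L_{\myblue f}/\mu_{\mygreen g}},\ \frac{\|\mathsf{K}\|}{\sqrt\lambda}\sqrt{L/\mu_{\mygreen g}}\big\}$. Since $\lambda\le\|\mathsf{K}\|^2$ and $L_{\myblue f}\le L$, both entries are at most $\frac{\|\mathsf{K}\|}{\sqrt\lambda}\sqrt{L/\mu_{\mygreen g}}$.
\item $a_2=\lambda\eta_x\eta_y/40$. Since $\eta_x\eta_y=1/(8\|\mathsf{K}\|^2)$, we get $1/a_2=320\|\mathsf{K}\|^2/\lambda$. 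This is the additive term in the claimed complexity.
\item $a_3=\lambda\eta_y/(20L)$, giving $1/a_3=160\,L\|\mathsf{K}\|^2\eta_x/\lambda$. Using $\eta_x\le \frac{\sqrt\lambda}{\|\mathsf{K}\|}\frac{1}{\sqrt{L\mu_{\mygreen g}}}$, this is at most $160\,\frac{\|\mathsf{K}\|}{\sqrt\lambda}\sqrt{L/\mu_{\mygreen g}}$. This term is the reason for the second entry of the $\min$ in $\eta_x$: it balances the $\eta_x$ in $1/a_3$ against the $1/\eta_x$ in $1/a_1$.
\item $a_4=\eta_z/2$, so $1/a_4=16L_{\myblue f}\eta_x\le 16L_{\myblue f}/\sqrt{L_{\myblue f}\mu_{\mygreen g}}=16\sqrt{L_{\myblue f}/\mu_{\mygreen g}}$. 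This is dominated by the first term of the claim.
\end{itemize}

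\emph{Conclusion.} The ``$+1$'' contributions are absorbed because $\|\mathsf{K}\|^2/\lambda\ge1$. Summing the bounds gives the stated complexity. The only delicate point is the $a_3$ term, where one must use the second branch of the $\min$ defining $\eta_x$ rather than the first. Everything else is routine algebra.
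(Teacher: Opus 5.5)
Your proposal is correct and follows essentially the same route as the paper: you set $\eta_y,\eta_z$ so that the theorem's stepsize conditions hold with equality, then bound each of the four contributions to $1/(1-\theta)$, using the appropriate branch of the $\min$ defining $\eta_x$ for each (the second branch for the $\lambda_{\min}(\mathsf{K}\mathsf{K}^*)\eta_y/(L_g+L_f)$ term). Your explicit uniqueness argument (strong convexity for $\xs$, then differentiability of $g$ plus injectivity of $\mathsf{K}^*$ for $\ys$) is a useful addition, since the paper states uniqueness in the corollary but never proves it.
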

The proof of Corollary~\ref{cor:acv_and_apdtr_complexity_non_smooth} can be found in Appendix~\ref{sec:proof_of_cor_acv_and_apdtr_complexity_non_smooth}.

\subsection{Linearly Constrained Case} 
Let $b \in \mathrm{ran}(\mathsf{K})$, the range of $\mathsf{K}$. We focus in this section on linearly constrained minimization problems of the form 
\begin{equation}
    \label{eq:main_opt_prob_lc}
    \min_{x \in \R^{d_x}} {\myblue f}(x) + {\mygreen g}(x), \quad \text{s.t.}\quad \mathsf{K}x = b.
\end{equation}
This problem is a special case of \eqref{eq:main_opt_prob} with  ${\myred h}= \iota_{\{b\}}$, 
where $\iota_{\mathcal{C}}$ denotes the indicator function of a set $\mathcal{C}$, that is, $\iota_{\mathcal{C}}(z) =( 0$ if $z \in \mathcal{C};  +\infty$ otherwise$)$. In that case,  the conjugate of ${\myred h}$ is  $\hstar: y \in \R^{d_y} \mapsto \la y, b\ra$.

Thus, we can solve this problem with Algorithm~\ref{alg:ACV} and Algorithm~\ref{alg:APDTR}, whose update rules become:
\begin{align*}
    y^{k+1} &= \arg\min_{y \in \R^{d_y}} \left\{ \hstar(y) - \la \mathsf{K}x^k, y\ra + \frac{1}{2\eta_y}\|y- y^k\|_2^2\right\}\\
    &= \arg\min_{y \in \R^{d_y}} \left\{  - \la y,\mathsf{K}x^k -  b\ra + \frac{1}{2\eta_y}\|y- y^k \|_2^2\right\}\\
    &= \arg\min_{y \in \R^{d_y}} \left\{ \frac{1}{2\eta_y}\|y- (y^k + \eta_y(\mathsf{K}x^k -  b))\|_2^2\right\}.
\end{align*}
Therefore, if we set $y^0 \in \mathrm{ran}(\mathsf{K})$, then $y^k \in \mathrm{ran}(\mathsf{K})$ for all $k \geq 0$, and the update rule for $y$ in Algorithm~\ref{alg:ACV} and Algorithm~\ref{alg:APDTR} simplifies to:
\begin{align*}
    \text{\algname{ACV-I} and \algname{APDTR-I}:} &\quad  y^{k+1} = y^k + \eta_y(\mathsf{K}(2x^{k+1} - x^k) -  b);\\
    \text{\algname{ACV-II} and \algname{APDTR-II}:} &\quad  y^{k+1} = y^k + \eta_y(\mathsf{K}x^k -  b).
\end{align*}
To provide convergence guarantees for these algorithms, 
we introduce the following assumption:
\begin{equation}
    \label{ass:minimal_eigenvalue}
    \lambda_{\min}^+(\mathsf{K}\mathsf{K}^*) \eqdef \inf_{u \in \mathrm{ran}(\mathsf{K})\setminus\{0\}} \frac{\|\mathsf{K}^*u\|_2^2}{\|u\|_2^2} > 0.
\end{equation}

Besides, the optimality conditions of problem~\eqref{eq:main_opt_prob_lc} are:
\begin{equation}
    \label{eq:main_optimality_condition_lc}
    \begin{cases}
        0 \in \nabla {\mygreen g}(\xs) + \mathsf{K}^*\ys + u^{\star},\\
        \ys \in \mathrm{ran}(\mathsf{K}),\\
        \mathsf{K}\xs = b,\\
        0 \in \partial {\myblue f^*}(u^{\star}) - \xs;
    \end{cases}
    \quad \Leftrightarrow \quad 
    \begin{cases}
        0 \in \nabla {\mygreen g}(\xs) + \mathsf{K}^*\ys + \nabla {\myblue f} (\zs),\\
        \ys \in \mathrm{ran}(\mathsf{K}),\\
        \mathsf{K}\xs = b,\\
        0 = \xs - \zs.
    \end{cases}
\end{equation}
We restrict the saddle-point solutions to satisfy the condition
$\ys \in \mathrm{ran}(\mathsf{K})$.  Indeed, projecting any dual solution onto $\mathrm{ran}(\mathsf{K})$ leaves $\mathsf{K}^*\ys$ unchanged, and since $b \in \mathrm{ran}(\mathsf{K})$, it also leaves $\la \ys, b\ra$ unchanged.
The proof of Corollary~\ref{cor:acv_and_apdtr_complexity_non_smooth} then applies on $\mathrm{ran}(\mathsf{K})$ with $\lambda_{\min}(\mathsf{K}\mathsf{K}^*)$ replaced by $\lambda_{\min}^+(\mathsf{K}\mathsf{K}^*)$.

\begin{corollary}
    \label{cor:acv_and_apdtr_complexity_non_smooth_lin_const}
    Let $(\xs,\ys,\zs)$ be a solution of \eqref{eq:main_optimality_condition_lc}. Suppose that $\lambda^+_{\min}(\mathsf{K}\mathsf{K}^*) > 0$, ${\mygreen g}$ is $L_{\mygreen g}$-smooth and $\mu_{{\mygreen g}}$-strongly convex for some $L_{\mygreen g} \geq \mu_{\mygreen g} > 0$. Then, the solution $(\xs,\ys,\zs)$ of \eqref{eq:main_optimality_condition_lc} is unique. Moreover, for any $\varepsilon >0$, the number of iterations to achieve $\Psi_k \leq \varepsilon \Psi_0$ for Algorithm~\ref{alg:ACV} and Algorithm~\ref{alg:APDTR} starting from $ (x^0,y^0,z^0)$ such that $y^0 \in \mathrm{ran}(\mathsf{K}) $ with parameter selection 
    \begin{equation*}
        \eta_x = \min\left\{\frac{1}{\sqrt{L_{\myblue f}\mu_{\mygreen g}}}, \sqrt{\frac{\lambda^+_{\min}(\mathsf{K}\mathsf{K}^*)}{\|\mathsf{K}\|^2}}\cdot \frac{1}{\sqrt{(L_{\mygreen g}+L_{\myblue f})\mu_{\mygreen g}}}\right\},
        \quad \eta_y = \frac{1}{8 \|\mathsf{K}\|^2 \eta_x},
        \quad \eta_z  = \frac{1}{8 L_{\myblue f}\eta_x},
    \end{equation*}
    is equal to 
    \begin{equation*}
        \cO\left(\left(\sqrt{\frac{L_{\myblue f} + L_{\mygreen g}}{\mu_{\mygreen g}}} \cdot \frac{\|\mathsf{K}\|}{\sqrt{\lambda^+_{\min}(\mathsf{K}\mathsf{K}^*)}} + \frac{\|\mathsf{K}\|^2}{\lambda^+_{\min}(\mathsf{K}\mathsf{K}^*)}\right)\log\frac{1}{\varepsilon}\right).
    \end{equation*}
\end{corollary}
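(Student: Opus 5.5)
The plan is to reduce Corollary~\ref{cor:acv_and_apdtr_complexity_non_smooth_lin_const} to Theorem~\ref{th:convergence_ACV_and_APDTR_non_smooth} and Corollary~\ref{cor:acv_and_apdtr_complexity_non_smooth}. We work on the invariant subspace $\R^{d_x}\times\mathrm{ran}(\mathsf{K})\times\R^{d_x}$ and replace $\lambda_{\min}(\mathsf{K}\mathsf{K}^*)$ by $\lambda^+_{\min}(\mathsf{K}\mathsf{K}^*)$ wherever it appears.

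\textbf{Step 1: invariance and uniqueness.} Since $\hstar(y)=\la y,b\ra$ and $b\in\mathrm{ran}(\mathsf{K})$, the $y$-updates written above are affine maps whose increments $\eta_y(\mathsf{K}w-b)$ lie in $\mathrm{ran}(\mathsf{K})$. Hence $y^0\in\mathrm{ran}(\mathsf{K})$ implies $y^k\in\mathrm{ran}(\mathsf{K})$ for all $k$, by induction.

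For uniqueness, $\mu_{\mygreen g}>0$ makes ${\myblue f}+{\mygreen g}$ strongly convex, so the constrained minimizer $\xs$ is unique, and $\zs=\xs$. The first line of \eqref{eq:main_optimality_condition_lc} then fixes $\mathsf{K}^*\ys=-\nabla{\mygreen g}(\xs)-\nabla{\myblue f}(\xs)$. The map $\mathsf{K}^*$ is injective on $\mathrm{ran}(\mathsf{K})$, because $\|\mathsf{K}^*u\|^2\ge\lambda^+_{\min}\|u\|^2$ by \eqref{ass:minimal_eigenvalue}. So the requirement $\ys\in\mathrm{ran}(\mathsf{K})$ pins down $\ys$ uniquely. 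Existence of such a $\ys$ follows by projecting any dual solution onto $\mathrm{ran}(\mathsf{K})$, as remarked before the statement.

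\textbf{Step 2: revisit the proof of Theorem~\ref{th:convergence_ACV_and_APDTR_non_smooth}.} That proof uses $\lambda_{\min}(\mathsf{K}\mathsf{K}^*)$ in one place: to lower bound a dual-distance term $\|y^{k+1}-\ys\|^2$ by $\lambda_{\min}^{-1}\|\mathsf{K}^*(y^{k+1}-\ys)\|^2$. The quantity $\|\mathsf{K}^*(y^{k+1}-\ys)\|$ is in turn controlled through the $x$-optimality condition. This involves $\nabla{\mygreen g}$ and $\nabla{\myblue f}$, which is where $L_{\mygreen g}+L_{\myblue f}$ enters, together with the primal residuals.

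In the present setting $y^{k+1}-\ys\in\mathrm{ran}(\mathsf{K})$, so the same inequality holds with $\lambda^+_{\min}$ by definition \eqref{ass:minimal_eigenvalue}. Every other step of the Lyapunov argument uses only convexity, smoothness, the stepsize conditions $8\|\mathsf{K}\|^2\eta_x\eta_y\le1$ and $8L_{\myblue f}\eta_x\eta_z\le1$, and the optimality conditions. These conditions are the same in \eqref{eq:main_optimality_condition_lc}, because $\partial\hstar\equiv b$ and $\mathsf{K}\xs=b$. Therefore $\Psi_k\le\theta^k\Psi_0$ holds with $\theta$ from Theorem~\ref{th:convergence_ACV_and_APDTR_non_smooth} after the substitution $\lambda_{\min}\to\lambda^+_{\min}$.

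\textbf{Step 3: parameters.} Plugging in the stated $\eta_x,\eta_y,\eta_z$ reproduces the computation of Corollary~\ref{cor:acv_and_apdtr_complexity_non_smooth} verbatim with $\lambda^+_{\min}$. We check each factor of $\theta$:
\begin{itemize}
\item $\mu_{\mygreen g}\eta_x$ gives the term $\sqrt{L_{\myblue f}/\mu_{\mygreen g}}+\sqrt{(L_{\mygreen g}+L_{\myblue f})/\mu_{\mygreen g}}\,\|\mathsf{K}\|/\sqrt{\lambda^+_{\min}}$;
\item $\lambda^+_{\min}\eta_x\eta_y=\lambda^+_{\min}/(8\|\mathsf{K}\|^2)$ gives the term $\|\mathsf{K}\|^2/\lambda^+_{\min}$;
\item $\lambda^+_{\min}\eta_y/(L_{\mygreen g}+L_{\myblue f})$ gives a term dominated by the previous ones, using $L_{\mygreen g}\ge\mu_{\mygreen g}$;
\item $\eta_z=1/(8L_{\myblue f}\eta_x)\ge\tfrac18\sqrt{\mu_{\mygreen g}/L_{\myblue f}}$ gives the term $\sqrt{L_{\myblue f}/\mu_{\mygreen g}}$.
\end{itemize}
Finally, $\log(1/\varepsilon)/\log(1/\theta)$ yields the stated bound, since $\sqrt{L_{\myblue f}/\mu_{\mygreen g}}$ is absorbed by the first term because $\|\mathsf{K}\|^2\ge\lambda^+_{\min}$.

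\textbf{Main obstacle.} Step 2 is the delicate part. We must confirm that the proof of Theorem~\ref{th:convergence_ACV_and_APDTR_non_smooth} applies the eigenvalue bound only to differences $y-\ys$ of dual iterates, which now lie in $\mathrm{ran}(\mathsf{K})$. If it were instead applied to some other vector outside $\mathrm{ran}(\mathsf{K})$, we would decompose that vector orthogonally and note that its component in $\ker\mathsf{K}^*$ contributes nothing to $\mathsf{K}^*(\cdot)$.
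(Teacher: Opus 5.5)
Your proposal is correct and follows the paper's route. The paper also uses range invariance of $y^k$ from $y^0\in\mathrm{ran}(\mathsf{K})$, takes the dual solution in $\mathrm{ran}(\mathsf{K})$, and reruns the proofs of Theorem~\ref{th:convergence_ACV_and_APDTR_non_smooth} and Corollary~\ref{cor:acv_and_apdtr_complexity_non_smooth} with $\lambda^+_{\min}(\mathsf{K}\mathsf{K}^*)$ in the single step $\|\mathsf{K}^*(y^{k+1}-\ys)\|^2\ge\lambda\|y^{k+1}-\ys\|^2$. Note that this step upper-bounds $\|y^{k+1}-\ys\|^2$; you phrased it as a lower bound. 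Your uniqueness argument is a genuine addition, since the paper asserts uniqueness without proof. Finally, the hypothesis $L_{\mygreen g}\ge\mu_{\mygreen g}$ is not needed for the third factor of $\theta$, which is bounded directly by the second branch of $\eta_x$.
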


As we can observe, the derived complexities in Corollary~\ref{cor:acv_and_apdtr_complexity_non_smooth} and Corollary~\ref{cor:acv_and_apdtr_complexity_non_smooth_lin_const} have two terms. 
The second term could be removed by incorporating Chebyshev acceleration \citep{kovalev2020optimal,salim2022optimal}. 

\section{Conclusion and Future Work}

In this paper, we introduced a unified, primal--dual recipe for accelerating gradient-based splitting methods to solve composite optimization problems. By formulating the three-operator splitting problem from a saddle-point perspective and applying the Chambolle--Pock algorithm, we systematically derived four novel accelerated algorithms: \algname{ACV-I}, \algname{ACV-II}, \algname{APDTR-I}, and \algname{APDTR-II}. Through a simplified, unified Lyapunov-based analysis, we established optimal rates when ${\myred h}$ is smooth and accelerated rates in the nonsmooth and linearly constrained cases. Crucially, our framework achieves these accelerated rates without relying on restrictive structural assumptions, such as requiring one of the nonsmooth components to be zero, that have limited existing 
algorithms.

Future research could extend this unified recipe to stochastic settings, particularly analyzing its robustness and acceleration capabilities in the presence of heavy-tailed noise, or to decentralized optimization problems where the objective function is distributed across multiple agents \citep{kovalev2020optimal}. Moreover, we believe that combining the presented framework with the concept of randomized proximal operators \citep{condat2023randprox} could yield a deeper understanding of the fundamental nature of accelerated and variance-reduced optimization methods, ultimately paving the way for a unified framework. 

\section*{Acknowledgements}

This work was supported by funding from King Abdullah University of Science and Technology (KAUST): i) KAUST Baseline Research Scheme, ii) CRG Grant ORFS-CRG12-2024-6460, and iii) Center of Excellence for Generative AI, under award number 5940.

\bibliography{literature}
\clearpage
{\renewcommand\baselinestretch{0}
\tableofcontents
\renewcommand\baselinestretch{1}
}

\clearpage
\part*{Appendix}
\appendix

\section{Auxiliary Lemmas}

The first lemma states the three-point identity for the Bregman divergence, also known as the ``Magic Identity'' in the book of \citet{bental_nemirovski_2022_lmco}.
\begin{lemma}[Three-point Identity]
    \label{lem:magic_identity}
    Let $\mathcal{X} \subseteq \mathbb{R}^n$ be a convex, closed set, $\phi: \mathcal{X}\to \mathbb{R}\cup\{\infty\}$ be a differentiable convex function and $\nabla\phi$ its gradient on $\mathcal{X}$. Then for any $z^+, z, \zs \in \mathcal{X}$ 
    \begin{equation*}
        D_\phi(z^+;\zs) = D_\phi(z;\zs) + \la \nabla\phi(z^+)-\nabla\phi(\zs), z^+-z\ra - D_\phi(z;z^+),
    \end{equation*}
    where $D_\phi(x;y) = \phi(x) - \phi(y) - \la \nabla\phi(y), x-y\ra$ is the Bregman divergence (see Definition \ref{def:bregman_divergence}).
\end{lemma}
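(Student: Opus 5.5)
The identity follows by direct expansion, using only Definition~\ref{def:bregman_divergence}. I would write out the three Bregman terms explicitly and show that the right-hand side collapses to the left-hand side.

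First expand
\[
D_\phi(z;\zs) = \phi(z) - \phi(\zs) - \la \nabla\phi(\zs), z - \zs\ra
\]
and
\[
D_\phi(z;z^+) = \phi(z) - \phi(z^+) - \la \nabla\phi(z^+), z - z^+\ra .
\]
Subtracting the second from the first, the $\phi(z)$ terms cancel, leaving
\[
\phi(z^+) - \phi(\zs) - \la \nabla\phi(\zs), z-\zs\ra + \la \nabla\phi(z^+), z - z^+\ra .
\]

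Next add the middle term, which I split as
\[
\la \nabla\phi(z^+)-\nabla\phi(\zs), z^+-z\ra = \la \nabla\phi(z^+), z^+-z\ra - \la \nabla\phi(\zs), z^+-z\ra .
\]
The piece $\la \nabla\phi(z^+), z^+-z\ra$ exactly cancels $\la \nabla\phi(z^+), z - z^+\ra$. The two remaining inner products with $\nabla\phi(\zs)$ combine by linearity:
\[
-\la \nabla\phi(\zs), z-\zs\ra - \la \nabla\phi(\zs), z^+-z\ra = -\la \nabla\phi(\zs), z^+-\zs\ra .
\]

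What is left is
\[
\phi(z^+) - \phi(\zs) - \la\nabla\phi(\zs), z^+-\zs\ra = D_\phi(z^+;\zs),
\]
which is the claimed identity.

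There is no real obstacle here; the only step needing care is sign bookkeeping in the two cancellations. The hypotheses (convexity of $\mathcal{X}$ and differentiability of $\phi$ at all three points) are used only to ensure every term is finite and well defined. Convexity of $\phi$ is not needed for the identity itself.
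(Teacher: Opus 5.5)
Your expansion is correct: the $\phi(z)$ terms and the two $\langle \nabla\phi(z^+),\cdot\rangle$ terms cancel, and the two remaining $\nabla\phi(\zs)$ inner products combine by linearity into exactly $D_\phi(z^+;\zs)$. The paper states this identity without proof, citing Ben-Tal and Nemirovski, and direct expansion from the definition of the Bregman divergence is the standard argument, so your route is the intended one; you are also right that convexity of $\phi$ is not used.
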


\begin{lemma}[\citep{nesterov2018lectures}]
    \label{lem:smoothness_bregman}
    Let $\phi$ be $L$-smooth and convex. Then, for any $x,y \in \mathbb{R}^n$ it holds 
    \begin{equation*}
        \frac{1}{2L}\|\nabla\phi(x) - \nabla\phi(y)\|^2\leq D_\phi(x;y) \leq \frac{L}{2}\|x-y\|^2.
    \end{equation*}
\end{lemma}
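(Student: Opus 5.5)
The upper bound follows from the fundamental theorem of calculus along the segment from $y$ to $x$; the lower bound follows by applying that upper bound to an auxiliary convex function minimized at $x$.

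\textbf{Upper bound.} Since $\phi$ is differentiable, I write
\begin{equation*}
\phi(x)-\phi(y)=\int_0^1 \la \nabla\phi(y+t(x-y)), x-y\ra\,dt .
\end{equation*}
Subtracting $\la\nabla\phi(y),x-y\ra$ gives
\begin{equation*}
D_\phi(x;y)=\int_0^1 \la \nabla\phi(y+t(x-y))-\nabla\phi(y), x-y\ra\,dt .
\end{equation*}
By Cauchy--Schwarz and \eqref{eq:L_smooth_def}, the integrand is at most $tL\|x-y\|^2$. Integrating over $t\in[0,1]$ yields $D_\phi(x;y)\le \frac{L}{2}\|x-y\|^2$. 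Convexity is not needed for this half.

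\textbf{Lower bound.} Fix $y$ and define $\psi(w)\eqdef \phi(w)-\la\nabla\phi(y),w\ra$. This $\psi$ is convex and $L$-smooth, with $\nabla\psi(w)=\nabla\phi(w)-\nabla\phi(y)$. Hence $\nabla\psi(y)=0$, and by convexity $y$ is a global minimizer of $\psi$. I apply the upper bound just proved to $\psi$, centered at $x$ and evaluated at $w=x-\frac1L\nabla\psi(x)$:
\begin{equation*}
\psi(y)\le \psi(w)\le \psi(x)+\la\nabla\psi(x),w-x\ra+\tfrac{L}{2}\|w-x\|^2=\psi(x)-\tfrac{1}{2L}\|\nabla\psi(x)\|^2 .
\end{equation*}
Expanding, $\psi(x)-\psi(y)=\phi(x)-\phi(y)-\la\nabla\phi(y),x-y\ra=D_\phi(x;y)$ and $\|\nabla\psi(x)\|=\|\nabla\phi(x)-\nabla\phi(y)\|$. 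The display therefore becomes $D_\phi(x;y)\ge \frac{1}{2L}\|\nabla\phi(x)-\nabla\phi(y)\|^2$.

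The only step requiring care is the choice of the auxiliary function $\psi$ so that $y$ is its minimizer. Once that is set up, both halves are short, and this is where convexity enters: it guarantees that the stationary point $y$ is a global minimizer of $\psi$.
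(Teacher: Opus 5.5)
Your proof is correct: the integral bound gives the upper inequality, and applying that bound to the tilted function $\psi(w)=\phi(w)-\la\nabla\phi(y),w\ra$ at the point $w=x-\frac{1}{L}\nabla\psi(x)$ gives the lower one, since $\psi$ is minimized at $y$. The paper gives no proof of its own and simply cites \citet{nesterov2018lectures}, whose standard argument is essentially the one you give.
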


\begin{lemma}[\citep{orabona2019modern}]
    \label{lem:strong_convexity_bregman_non_diff}
    Let $\phi$ be $\mu$-strongly convex. Then, for any $x,y \in \mathbb{R}^n$ and any $p \in \partial \phi(x), q\in \partial \phi(y)$ it holds 
    \begin{equation*}
       \langle p-q, x-y\rangle \geq \mu \|x-y\|^2.
    \end{equation*}
\end{lemma}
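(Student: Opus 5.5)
The plan is to use only the definition of $\mu$-strong convexity in \eqref{eq:mu_strongly_convex_def}, applied twice with the roles of the two points swapped, and then add the two inequalities. First, a small remark on the domain. The statement quantifies over $x,y\in\R^n$, but it is only meaningful when $\partial\phi(x)$ and $\partial\phi(y)$ are nonempty. By definition of the subdifferential of a proper function, nonemptiness forces $x,y\in\mathrm{dom}(\phi)$. Hence $\phi(x)$ and $\phi(y)$ are finite, and the definition of strong convexity applies at both points.

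Next, fix $p\in\partial\phi(x)$ and $q\in\partial\phi(y)$. Invoke \eqref{eq:mu_strongly_convex_def} with base point $y$ and subgradient $q$, evaluated at $x$:
\[
\phi(x)\ \ge\ \phi(y)+\la q, x-y\ra+\tfrac{\mu}{2}\|x-y\|^2 .
\]
Then invoke it with base point $x$ and subgradient $p$, evaluated at $y$:
\[
\phi(y)\ \ge\ \phi(x)+\la p, y-x\ra+\tfrac{\mu}{2}\|x-y\|^2 .
\]

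Adding the two inequalities, the finite quantity $\phi(x)+\phi(y)$ appears on both sides and cancels. This leaves
\[
0\ \ge\ \la q-p, x-y\ra+\mu\|x-y\|^2 .
\]
Rearranging gives $\la p-q,x-y\ra\ge \mu\|x-y\|^2$, which is the claim.

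There is no real obstacle here. The only point requiring care is the first step: the cancellation of $\phi(x)+\phi(y)$ is legitimate only because both values are finite, which is why membership of $x,y$ in $\mathrm{dom}(\phi)$ is established before adding. The case $\mu=0$ is covered by the same argument and yields ordinary monotonicity of $\partial\phi$.
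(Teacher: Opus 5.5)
Your proof is correct and is the standard argument. The paper does not prove this lemma itself but cites \citet{orabona2019modern}, where it is obtained the same way, by writing the strong convexity inequality at both points and adding; your check that nonempty subdifferentials force $x,y\in\mathrm{dom}(\phi)$, so that the finite quantity $\phi(x)+\phi(y)$ can be cancelled, is exactly what is needed to apply the paper's definition \eqref{eq:mu_strongly_convex_def}, which is stated only on $\mathrm{dom}(\phi)$.
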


\begin{lemma}[\citep{nesterov2018lectures}]
    \label{lem:strong_convexity_bregman}
    Let $\phi$ be differentiable and $\mu$-strongly convex with $\mu > 0$. Then, for any $x,y \in \mathbb{R}^n$ it holds 
    \begin{equation*}
        \frac{\mu}{2}\|x-y\|^2\leq D_\phi(x;y) \leq \frac{1}{2\mu}\|\nabla\phi(x) - \nabla\phi(y)\|^2.
    \end{equation*}
\end{lemma}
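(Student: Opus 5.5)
The two inequalities are proved separately, each directly from the definition of strong convexity \eqref{eq:mu_strongly_convex_def} and the definition of the Bregman divergence \eqref{eq:Bregman_def}. Since $\phi$ is differentiable, we may use $p=\nabla\phi(y)$ as the subgradient at $y$ and $p=\nabla\phi(x)$ as the subgradient at $x$.

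\emph{Lower bound.} Apply \eqref{eq:mu_strongly_convex_def} with base point $y$ and subgradient $\nabla\phi(y)$:
\begin{equation*}
\phi(x)\ge\phi(y)+\la\nabla\phi(y),x-y\ra+\tfrac{\mu}{2}\|x-y\|^2 .
\end{equation*}
Moving terms to the left-hand side gives exactly $D_\phi(x;y)\ge\frac{\mu}{2}\|x-y\|^2$.

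\emph{Upper bound.} Now apply \eqref{eq:mu_strongly_convex_def} with the roles swapped, i.e.\ base point $x$ and subgradient $\nabla\phi(x)$:
\begin{equation*}
\phi(y)\ge\phi(x)+\la\nabla\phi(x),y-x\ra+\tfrac{\mu}{2}\|x-y\|^2 .
\end{equation*}
Rearranged, this reads $-\phi(y)\le-\phi(x)+\la\nabla\phi(x),x-y\ra-\frac{\mu}{2}\|x-y\|^2$. Substituting this bound for $-\phi(y)$ into \eqref{eq:Bregman_def} yields
\begin{equation*}
D_\phi(x;y)\le\la\nabla\phi(x)-\nabla\phi(y),x-y\ra-\tfrac{\mu}{2}\|x-y\|^2 .
\end{equation*}
Set $r=\|x-y\|$ and $G=\|\nabla\phi(x)-\nabla\phi(y)\|$. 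By Cauchy--Schwarz, the right-hand side is at most $Gr-\frac{\mu}{2}r^2$. This concave quadratic in $r$ attains its maximum at $r=G/\mu$, so it is bounded by $\frac{G^2}{2\mu}$, which is the claimed upper bound. Equivalently, one can use Young's inequality $Gr\le\frac{G^2}{2\mu}+\frac{\mu}{2}r^2$.

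There is no genuine obstacle here. The only point requiring care is to apply strong convexity at the base point $x$ (rather than $y$) for the upper bound, so that the quadratic term enters with a negative sign and can absorb the inner product.
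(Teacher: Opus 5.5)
Your proof is correct and complete: the lower bound is the strong convexity inequality at base point $y$, and the upper bound follows from strong convexity at base point $x$ combined with the Young-type bound $Gr-\frac{\mu}{2}r^2\le\frac{G^2}{2\mu}$. The paper gives no proof of its own and only cites \citet{nesterov2018lectures}. The argument there tilts $\phi$ by $-\la\nabla\phi(y),\cdot\ra$ so that $y$ becomes its minimizer, then minimizes the quadratic lower model built at $x$; once unwound, this is the same computation as yours.
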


\section{Additional Proofs of Section \ref{sec:APGD_and_APGE} }
\label{sec:proof_APGD_and_APGE}

\begin{lemma}
    \label{lem:lyaponov_func_apgd_apge} 
    Define the Lyapunov function $\Phi_{k}$ for any integer $k \geq 0$ as 
    \begin{equation}
        \label{eq:lyaponov_func_apgd_apge}
        \Phi_{k} \eqdef \frac{1}{2\eta_x}\|x^{k}-\xs\|^2 +  \frac{1}{\eta_z} D_{{\myblue f}}(z^{k};\zs) \pm \la \nabla {\myblue f}(z^{k}) - \nabla{\myblue f}(\zs), x^{k} - \xs\ra,
    \end{equation}
    where the sign $\pm$ is '$-$' for \algname{APGD} and '$+$' for \algname{APGE}.
    Then, if $L_{\myblue f} \eta_x\eta_z \leq 1$ then it holds that
    \begin{equation}
        \label{eq:lyaponov_func_apgd_apge_1}
        0 \leq \Phi_{k} \leq \frac{1}{\eta_x}\|x^{k}-\xs\|^2 +  \frac{2}{\eta_z} D_{{\myblue f}}(z^{k};\zs).
    \end{equation}
\end{lemma}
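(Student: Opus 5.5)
The whole argument is a two-sided bound on the cross term $\la \nabla {\myblue f}(z^{k}) - \nabla{\myblue f}(\zs), x^{k} - \xs\ra$ using Young's inequality. The strength of Young's inequality is chosen so that the gradient-difference part is absorbed by the Bregman term through Lemma~\ref{lem:smoothness_bregman}. The sign in front of the cross term then plays no role, because both bounds come from the absolute value of the inner product.

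\textbf{Step 1 (Cauchy--Schwarz and Young).} For any $\alpha>0$,
\[
\big|\la \nabla {\myblue f}(z^{k}) - \nabla{\myblue f}(\zs), x^{k} - \xs\ra\big| \le \frac{1}{2\eta_x}\|x^k-\xs\|^2 + \frac{\eta_x}{2}\|\nabla {\myblue f}(z^{k}) - \nabla{\myblue f}(\zs)\|^2 .
\]
This uses the weight $\alpha=\eta_x$, chosen so that the first term exactly matches the quadratic part of $\Phi_k$.

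\textbf{Step 2 (cocoercivity).} Since ${\myblue f}$ is convex and $L_{\myblue f}$-smooth, the left inequality of Lemma~\ref{lem:smoothness_bregman} gives
\[
\|\nabla {\myblue f}(z^{k}) - \nabla{\myblue f}(\zs)\|^2\le 2L_{\myblue f} D_{\myblue f}(z^k;\zs).
\]
Therefore the second term in Step 1 is at most $\eta_x L_{\myblue f} D_{\myblue f}(z^k;\zs)$. The stepsize condition $L_{\myblue f}\eta_x\eta_z\le 1$ gives $\eta_x L_{\myblue f}\le 1/\eta_z$, so this term is at most $\frac{1}{\eta_z}D_{\myblue f}(z^k;\zs)$.

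\textbf{Step 3 (conclude).}
\begin{itemize}
\item \emph{Lower bound.} Subtract the absolute-value bound from $\Phi_k$. The $\frac{1}{2\eta_x}\|x^k-\xs\|^2$ terms cancel, and so do the $\frac1{\eta_z}D_{\myblue f}$ terms, leaving $\Phi_k\ge 0$.
\item \emph{Upper bound.} Add the absolute-value bound to $\Phi_k$. This gives $\Phi_k\le \frac1{\eta_x}\|x^k-\xs\|^2+\frac{2}{\eta_z}D_{\myblue f}(z^k;\zs)$, which is \eqref{eq:lyaponov_func_apgd_apge_1}.
\end{itemize}

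This covers both the \algname{APGD} and \algname{APGE} signs simultaneously. The only care point is Step 2: the inequality $\frac{1}{2L}\|\nabla\phi(x)-\nabla\phi(y)\|^2\le D_\phi(x;y)$ requires convexity of ${\myblue f}$ together with global smoothness, and both hold under the standing assumptions. No real obstacle is expected.
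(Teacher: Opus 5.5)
Your proposal is correct and follows essentially the same route as the paper. The paper likewise applies Fenchel--Young with weight $\eta_x$, then the left inequality of Lemma~\ref{lem:smoothness_bregman}, then $L_{\myblue f}\eta_x\eta_z\le 1$ to get $\Phi_k\ge 0$, and it obtains the upper bound by the same argument with the opposite sign, which is what your absolute-value formulation handles in one pass.
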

\begin{proof}
    By Fenchel-Young's inequality and Lemma~\ref{lem:smoothness_bregman} one has
    \begin{eqnarray*}
    \Phi_{k} &\geq& \frac{1}{2\eta_x}\|x^{k}-\xs\|^2 +  \frac{1}{\eta_z} D_{{\myblue f}}(z^{k};\zs) - \frac{\eta_x}{2}\|\nabla {\myblue f}(z^{k}) - \nabla{\myblue f}(\zs)\|^2 -\frac{1}{2\eta_x}\|x^{k} - \xs\|^2\\
    &\geq& \frac{1}{2\eta_x}\|x^{k}-\xs\|^2 +  \frac{1}{\eta_z} D_{{\myblue f}}(z^{k};\zs) -L_{\myblue f}\eta_x D_{\myblue f}(z^{k}; \zs)  - \frac{1}{2\eta_x}\|x^{k} - \xs\|^2.
\end{eqnarray*}
Then, because $L_{\myblue f} \eta_x\eta_z \leq 1$ , $\Phi_{k}$ is nonnegative for any integer $k \geq 0$.
By the same argument with opposite sign for bounding the inner product term one can prove that for any integer $k \geq 0$, it holds that
\begin{equation*}
    \Phi_k \leq \frac{1}{\eta_x}\|x^k-\xs\|^2 + \frac{2}{\eta_z} D_{\myblue f}(z^k; \zs).
\end{equation*}
This completes the proof.

\end{proof}

\subsection{Descent Lemmas}
First, we derive the descent lemma for the primal update of \algname{APGD} (see Line 3 (left) of Algorithm~\ref{alg:APGD_APGE}). 
\begin{lemma}
    \label{lem:descent_lemma_apgd_x}
    Let $(\xs,\zs)$ be a solution of \eqref{eq:main_prob_apgd_optimality_condition}, and assume  ${\mygreen g}$ is $\mu_{\mygreen g}$-strongly convex for some $\mu_{\mygreen g}\ge 0$.
    Then, for every $k \geq 0$, the iterates of Algorithm~\ref{alg:APGD_APGE} satisfy 
    \begin{eqnarray}
        \frac{1}{2\eta_x}\|x^{k+1}-\xs\|^2 &\leq& \frac{1}{2\eta_x}\|x^{k}-\xs\|^2 - \mu_{\mygreen g}\|x^{k+1}-\xs\|^2 - \frac{1}{2\eta_x}\|x^{k+1}-x^k\|^2 \notag\\
        && -  \la \nabla {\myblue f} (z^k) - \nabla {\myblue f} (\zs), x^{k}-\xs\ra  -  \la \nabla {\myblue f} (z^k) - \nabla {\myblue f} (\zs), x^{k+1}-x^k\ra. \label{eq:descent_lemma_apgd_x}
    \end{eqnarray}
\end{lemma}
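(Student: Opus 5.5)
The plan is to combine the first-order optimality condition of the proximal step with the optimality condition \eqref{eq:main_prob_apgd_optimality_condition}. Then I would use strong monotonicity of $\partial {\mygreen g}$ and finish with the Euclidean three-point (polarization) identity. Everything reduces to one inequality, and the rest is algebraic rearrangement.

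\textbf{Step 1: optimality of the prox step.} By definition \eqref{eq:prox_def} of the proximal operator, the update $x^{k+1} = \prox_{\eta_x {\mygreen g}}(x^k - \eta_x \nabla {\myblue f}(z^k))$ is equivalent to
\begin{equation*}
    \tfrac{1}{\eta_x}(x^k - x^{k+1}) - \nabla {\myblue f}(z^k) \in \partial {\mygreen g}(x^{k+1}).
\end{equation*}
From \eqref{eq:main_prob_apgd_optimality_condition} we have $-\nabla {\myblue f}(\zs) \in \partial {\mygreen g}(\xs)$.

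\textbf{Step 2: strong monotonicity.} I apply Lemma~\ref{lem:strong_convexity_bregman_non_diff} to these two subgradients. This gives
\begin{equation*}
    \tfrac{1}{\eta_x}\la x^k - x^{k+1}, x^{k+1}-\xs\ra - \la \nabla {\myblue f}(z^k) - \nabla {\myblue f}(\zs), x^{k+1}-\xs\ra \geq \mu_{\mygreen g}\|x^{k+1}-\xs\|^2 .
\end{equation*}
This is the source of the term $-\mu_{\mygreen g}\|x^{k+1}-\xs\|^2$ with coefficient $\mu_{\mygreen g}$ rather than $\mu_{\mygreen g}/2$.

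\textbf{Step 3: rewriting.} I use the identity
\begin{equation*}
    2\la x^k - x^{k+1}, x^{k+1}-\xs\ra = \|x^k-\xs\|^2 - \|x^{k+1}-\xs\|^2 - \|x^{k+1}-x^k\|^2 .
\end{equation*}
I also split $x^{k+1}-\xs = (x^k-\xs) + (x^{k+1}-x^k)$ inside the gradient inner product. Dividing the first term by $2\eta_x$ and rearranging yields exactly \eqref{eq:descent_lemma_apgd_x}.

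There is no real obstacle here. The only care needed is bookkeeping of signs, and making sure the gradient term is split at $x^k$. That split is what produces the two inner products in the statement, $\la \nabla {\myblue f}(z^k)-\nabla {\myblue f}(\zs), x^k-\xs\ra$ and $\la \nabla {\myblue f}(z^k)-\nabla {\myblue f}(\zs), x^{k+1}-x^k\ra$. These will later be matched against the Lyapunov cross term $-\la \nabla {\myblue f}(z^{k})-\nabla {\myblue f}(\zs), x^{k}-\xs\ra$ in $\Phi_k$.
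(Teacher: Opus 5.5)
Your proposal is correct and follows essentially the same route as the paper: the prox optimality condition, strong monotonicity of $\partial {\mygreen g}$ (Lemma~\ref{lem:strong_convexity_bregman_non_diff}) against $-\nabla {\myblue f}(\zs)\in\partial {\mygreen g}(\xs)$, the three-point expansion of $\|x^{k+1}-\xs\|^2$, and the split $x^{k+1}-\xs=(x^k-\xs)+(x^{k+1}-x^k)$. The only difference is ordering: the paper expands the squared norm first and then inserts the optimality condition, whereas you apply monotonicity first and then polarize.
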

\begin{proof}
    According to Line $3$ (left) of Algorithm~\ref{alg:APGD_APGE},
    \begin{equation*}
        x^{k+1} = \prox_{\eta_x {\mygreen g}}(x^k - \eta_x \nabla {\myblue f}(z^k)) \quad \Rightarrow \quad x^{k+1} = x^k -\eta_x\left(\partial {\mygreen g}(x^{k+1}) + \nabla {\myblue f}(z^k) \right).\footnote{Here and throughout, we slightly abuse notation by treating the subgradient $\partial {\mygreen g}(x^{k+1})$ as a single vector rather than a set of vectors. When we write this equality, we mean that there exists a valid subderivative in this set such that the equality holds.}
    \end{equation*}
    Using this, we have
    \begin{eqnarray}
        \frac{1}{2\eta_x}\|x^{k+1}-\xs\|^2 &=& \frac{1}{2\eta_x}\|x^{k}-\xs\|^2 + \frac{1}{\eta_x}\la x^{k+1}-x^k, x^{k+1}-\xs\ra  - \frac{1}{2\eta_x}\|x^{k+1}-x^k\|^2 \notag\\
        &=& \frac{1}{2\eta_x}\|x^{k}-\xs\|^2 - \la \partial{\mygreen g}(x^{k+1}) + \nabla {\myblue f}(z^k), x^{k+1}-\xs\ra  - \frac{1}{2\eta_x}\|x^{k+1}-x^k\|^2.\label{eq:descent_lemma_apgd_x_proof_1}
    \end{eqnarray}
    Plugging  the optimality condition $\partial{\mygreen g}(\xs) + \nabla {\myblue f}(\zs) = 0$ into \eqref{eq:descent_lemma_apgd_x_proof_1}, we obtain
    \begin{eqnarray*}
        \frac{1}{2\eta_x}\|x^{k+1}-\xs\|^2 &=& \frac{1}{2\eta_x}\|x^{k}-\xs\|^2 - \la \partial{\mygreen g}(x^{k+1}) - \partial{\mygreen g}(\xs), x^{k+1}-\xs\ra  - \frac{1}{2\eta_x}\|x^{k+1}-x^k\|^2\\
        && - \la \nabla {\myblue f} (z^k) - \nabla {\myblue f} (\zs), x^{k+1}-\xs\ra.
    \end{eqnarray*}
    Using Lemma~\ref{lem:strong_convexity_bregman_non_diff}, we get 
    \begin{eqnarray}
        \frac{1}{2\eta_x}\|x^{k+1}-\xs\|^2 &\leq& \frac{1}{2\eta_x}\|x^{k}-\xs\|^2 -  \mu_{{\mygreen g}}\|x^{k+1}-\xs\|^2  - \frac{1}{2\eta_x}\|x^{k+1}-x^k\|^2 \notag\\
        && - \la \nabla {\myblue f} (z^k) - \nabla {\myblue f} (\zs), x^{k+1}-\xs\ra.\label{eq:proof_thm_conv_apgd_2}
    \end{eqnarray}
    Rewriting \eqref{eq:proof_thm_conv_apgd_2}, we obtain \eqref{eq:descent_lemma_apgd_x}.
\end{proof}
We now show the descent lemma for the $z$-update.
\begin{lemma}
    \label{lem:descent_lemma_apgd_z}
    Let $(\xs,\ys,\zs)$ be a solution of \eqref{eq:main_optimality_condition}. 
    Assume the sequence $\{z^k\}_{k\geq 0}$ is generated by the  update rule
    \begin{equation*}
        z^{k+1}= z^k -\eta_z\left(z^{k+1}- (2x^{k+1}-x^k)\right).\footnote{For example, see Line $4$ (left) of Algorithm~\ref{alg:APGD_APGE}.}
    \end{equation*}
    Then, for any constant $a_z > 0$ and any integer  $k \geq 0$, it holds 
    \begin{eqnarray}
        \frac{1}{\eta_z} D_{\myblue f}(z^{k+1};\zs) &\leq&\frac{1}{\eta_z} D_{\myblue f}(z^k;\zs) - D_{\myblue f}(z^{k+1};\zs) + \la \nabla {\myblue f}(z^{k+1}) - \nabla{\myblue f}(\zs), x^{k+1} - \xs\ra - \frac{1-a_z}{\eta_z} D_{\myblue f}(z^k;z^{k+1})\notag\\
        && - D_{\myblue f}(\zs;z^{k+1})+ \la \nabla {\myblue f}(z^{k}) - \nabla{\myblue f}(\zs), x^{k+1} - x^k\ra + L_{\myblue f}\eta_x\eta_z  \frac{1}{2a_z\eta_x}\|x^{k+1}-x^k\|^2. \label{eq:descent_lemma_apgd_z}
    \end{eqnarray}
\end{lemma}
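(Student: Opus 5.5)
The plan is a direct computation: apply the three-point identity (Lemma~\ref{lem:magic_identity}) to the $z$-update, substitute the update rule into the resulting inner product, and control the one leftover cross term with Young's inequality together with Lemma~\ref{lem:smoothness_bregman}. Throughout I use $\zs=\xs$ from \eqref{eq:main_optimality_condition}.

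\textbf{Three-point identity.} Apply Lemma~\ref{lem:magic_identity} with $\phi={\myblue f}$, $z^+=z^{k+1}$, $z=z^k$ and divide by $\eta_z$:
\begin{equation*}
\tfrac{1}{\eta_z}D_{\myblue f}(z^{k+1};\zs)=\tfrac{1}{\eta_z}D_{\myblue f}(z^{k};\zs)+\tfrac{1}{\eta_z}\la \nabla{\myblue f}(z^{k+1})-\nabla{\myblue f}(\zs), z^{k+1}-z^k\ra-\tfrac{1}{\eta_z}D_{\myblue f}(z^k;z^{k+1}).
\end{equation*}

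\textbf{Substituting the update.} The update rule gives $\tfrac{1}{\eta_z}(z^{k+1}-z^k)=(2x^{k+1}-x^k)-z^{k+1}$. Since $\zs=\xs$, this vector decomposes as
\begin{equation*}
(x^{k+1}-\xs)+(x^{k+1}-x^k)-(z^{k+1}-\zs).
\end{equation*}
I pair each piece with $\nabla{\myblue f}(z^{k+1})-\nabla{\myblue f}(\zs)$.
\begin{itemize}
\item The first piece gives the term $\la \nabla {\myblue f}(z^{k+1})-\nabla{\myblue f}(\zs), x^{k+1}-\xs\ra$ appearing in \eqref{eq:descent_lemma_apgd_z}.
\item For the third piece, the symmetrized Bregman identity $\la \nabla{\myblue f}(z^{k+1})-\nabla{\myblue f}(\zs), z^{k+1}-\zs\ra = D_{\myblue f}(z^{k+1};\zs)+D_{\myblue f}(\zs;z^{k+1})$ produces the two negative terms $-D_{\myblue f}(z^{k+1};\zs)-D_{\myblue f}(\zs;z^{k+1})$.
\item For the middle piece, I add and subtract $\nabla{\myblue f}(z^k)$. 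This gives $\la \nabla{\myblue f}(z^{k})-\nabla{\myblue f}(\zs), x^{k+1}-x^k\ra$, which is kept as is, plus the cross term $\la \nabla{\myblue f}(z^{k+1})-\nabla{\myblue f}(z^{k}), x^{k+1}-x^k\ra$.
\end{itemize}

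\textbf{Bounding the cross term.} This is the only step needing care, since it must be absorbed partly into $-\tfrac{1}{\eta_z}D_{\myblue f}(z^k;z^{k+1})$ with the free weight $a_z$. I apply Young's inequality with weight $\tfrac{a_z}{2L_{\myblue f}\eta_z}$ on the gradient difference:
\begin{equation*}
\la \nabla{\myblue f}(z^{k+1})-\nabla{\myblue f}(z^{k}), x^{k+1}-x^k\ra\le \tfrac{a_z}{2L_{\myblue f}\eta_z}\|\nabla{\myblue f}(z^{k+1})-\nabla{\myblue f}(z^k)\|^2+\tfrac{L_{\myblue f}\eta_z}{2a_z}\|x^{k+1}-x^k\|^2 .
\end{equation*}
The lower bound of Lemma~\ref{lem:smoothness_bregman}, applied with $x=z^k$ and $y=z^{k+1}$, gives $\|\nabla{\myblue f}(z^{k+1})-\nabla{\myblue f}(z^k)\|^2\le 2L_{\myblue f}D_{\myblue f}(z^k;z^{k+1})$. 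Hence the first term is at most $\tfrac{a_z}{\eta_z}D_{\myblue f}(z^k;z^{k+1})$.

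\textbf{Collecting terms.} Combined with $-\tfrac{1}{\eta_z}D_{\myblue f}(z^k;z^{k+1})$, the first term yields $-\tfrac{1-a_z}{\eta_z}D_{\myblue f}(z^k;z^{k+1})$. The second term is rewritten as $L_{\myblue f}\eta_x\eta_z\cdot\tfrac{1}{2a_z\eta_x}\|x^{k+1}-x^k\|^2$. With all pieces collected, this gives exactly \eqref{eq:descent_lemma_apgd_z}.
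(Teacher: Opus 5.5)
Your proposal is correct and matches the paper's proof essentially step for step. Both arguments use the three-point identity, substitute the $z$-update with $\zs=\xs$, apply the symmetrized Bregman identity to the $z^{k+1}-\zs$ piece, add and subtract $\nabla f(z^k)$ in the $x^{k+1}-x^k$ piece, and bound the cross term by Young's inequality with weight $a_z/(L_f\eta_z)$ together with the lower bound of Lemma~\ref{lem:smoothness_bregman}.
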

\begin{proof}
    By Lemma~\ref{lem:magic_identity}, we have
    \begin{eqnarray}
        \frac{1}{\eta_z} D_{{\myblue f}}(z^{k+1};\zs) &=& \frac{1}{\eta_z} D_{{\myblue f}}(z^{k};\zs) + \frac{1}{\eta_z}\la \nabla {\myblue f}(z^{k+1}) - \nabla{\myblue f}(\zs), z^{k+1} - z^{k}\ra - \frac{1}{\eta_z} D_{{\myblue f}}(z^{k};z^{k+1})\notag\\
    &=& \frac{1}{\eta_z} D_{{\myblue f}}(z^{k};\zs) - \frac{1}{\eta_z}\la \nabla {\myblue f}(z^{k+1}) - \nabla{\myblue f}(\zs), \eta_z\left(z^{k+1}-(2x^{k+1}-x^k)\right)\ra\notag\\
    && - \frac{1}{\eta_z} D_{{\myblue f}}(z^{k};z^{k+1}), \label{eq:descent_lemma_apgd_z_proof_1}
    \end{eqnarray}
    where in the last equation we used the update rule for $z^{k+1}= z^k -\eta_z\left(z^{k+1}- (2x^{k+1}-x^k)\right)$.
    Plugging in the optimality condition $\xs - \zs = 0$ in \eqref{eq:descent_lemma_apgd_z_proof_1}, we have 
\begin{eqnarray}
    \frac{1}{\eta_z} D_{{\myblue f}}(z^{k+1};\zs) &=& \frac{1}{\eta_z} D_{{\myblue f}}(z^{k};\zs) - \la \nabla {\myblue f}(z^{k+1}) - \nabla{\myblue f}(\zs), z^{k+1} -\zs \ra - \frac{1}{\eta_z} D_{{\myblue f}}(z^{k};z^{k+1}) \notag\\
    && + \la \nabla {\myblue f}(z^{k+1}) - \nabla{\myblue f}(\zs), x^{k+1}-x^k \ra + \la \nabla {\myblue f}(z^{k+1}) - \nabla{\myblue f}(\zs), x^{k+1} - \xs\ra \notag\\
    &=& \frac{1}{\eta_z} D_{{\myblue f}}(z^{k};\zs) - D_{{\myblue f}}(z^{k+1};\zs) -  D_{{\myblue f}}(\zs; z^{k+1})\notag\\
    && + \la \nabla {\myblue f}(z^{k+1}) - \nabla{\myblue f}(\zs), x^{k+1} - \xs\ra - \frac{1}{\eta_z} D_{{\myblue f}}(z^{k};z^{k+1})\notag\\
    && + \underbrace{\la \nabla {\myblue f}(z^{k+1}) - \nabla{\myblue f}(z^k), x^{k+1}-x^k \ra}_{\eqdef \cT_1}  + \la \nabla {\myblue f}(z^{k}) - \nabla{\myblue f}(\zs), x^{k+1}-x^k \ra. \label{eq:descent_lemma_apgd_z_proof_2}
\end{eqnarray}
To continue our proof we need to bound the term $\cT_1$ from \eqref{eq:descent_lemma_apgd_z_proof_2}. By Fenchel-Young's inequality, 
\begin{eqnarray*}
    \cT_1 &\leq& \frac{C}{2}\|\nabla {\myblue f}(z^{k+1}) - \nabla{\myblue f}(z^k)\|^2 + \frac{1}{2C}\|x^{k+1}-x^k \|^2.
\end{eqnarray*}
Since $\myblue f$ is $L_{\myblue f}$-smooth and convex, we apply Lemma~\ref{lem:smoothness_bregman} to derive 
\begin{eqnarray}
   \cT_1
    &\leq& CL_{{\myblue f}}D_{{\myblue f}}(z^k; z^{k+1}) + \frac{1}{2C}\|x^{k+1}-x^k \|^2 \notag\\
    &\overset{C = \nicefrac{a_z}{L_{\myblue f}\eta_z}}{=}& \frac{a_z}{\eta_z}D_{{\myblue f}}(z^k; z^{k+1}) + L_{{\myblue f}}\eta_x\eta_z\frac{1}{2a_z\eta_x}\|x^{k+1}-x^k \|^2. \label{eq:descent_lemma_apgd_z_proof_3}
\end{eqnarray}
Plugging in \eqref{eq:descent_lemma_apgd_z_proof_3} into \eqref{eq:descent_lemma_apgd_z_proof_2}, we obtain \eqref{eq:descent_lemma_apgd_z}, which concludes proof.
\end{proof}

Next, we state descent lemmas for the updates of \algname{APGE} (see Algorithm~\ref{alg:APGD_APGE}). We start with the primal variables.
\begin{lemma}
    \label{lem:descent_lemma_apge_x}
    Let $(\xs,\zs)$ be a solution of \eqref{eq:main_prob_apgd_optimality_condition}, and assume ${\mygreen g}$ is $\mu_{\mygreen g}$-strongly convex for some $\mu_{\mygreen g}\ge 0$.
    Then, for every $k\geq 0$, the iterates generated by \algname{APGE} (Algorithm~\ref{alg:APGD_APGE}) satisfy 
    \begin{eqnarray}
        \frac{1}{2\eta_x}\|x^{k+1}-\xs\|^2 &\leq& \frac{1}{2\eta_x}\|x^{k}-\xs\|^2 -  \mu_{{\mygreen g}}\|x^{k+1}-\xs\|^2  - \frac{1}{2\eta_x}\|x^{k+1}-x^k\|^2 \notag\\
        && - \la \nabla {\myblue f} (z^{k+1}) - \nabla {\myblue f} (\zs), x^{k+1}-\xs\ra  - \la \nabla {\myblue f}(z^{k+1}) - \nabla {\myblue f}(z^k), x^{k+1}-\xs\ra. \label{eq:descent_lemma_apge_x}
    \end{eqnarray}
\end{lemma}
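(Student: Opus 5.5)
The plan is to mirror the proof of Lemma~\ref{lem:descent_lemma_apgd_x}, replacing the gradient $\nabla {\myblue f}(z^k)$ by the reflected gradient $2\nabla {\myblue f}(z^{k+1}) - \nabla {\myblue f}(z^k)$ used in the $x$-update of \algname{APGE}.

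First, I will write the optimality condition of the prox step in Line~4 (right) of Algorithm~\ref{alg:APGD_APGE}:
\[
x^{k+1} = x^k - \eta_x\big(\partial {\mygreen g}(x^{k+1}) + 2\nabla {\myblue f}(z^{k+1}) - \nabla {\myblue f}(z^k)\big),
\]
with the same convention on subgradients as before. Next, I will expand $\frac{1}{2\eta_x}\|x^{k+1}-\xs\|^2$ via the identity $\|a\|^2 = \|b\|^2 + 2\la a-b, a\ra - \|a-b\|^2$, taking $a = x^{k+1}-\xs$ and $b = x^k-\xs$. This gives
\[
\tfrac{1}{2\eta_x}\|x^{k}-\xs\|^2 - \la \partial{\mygreen g}(x^{k+1}) + 2\nabla {\myblue f}(z^{k+1}) - \nabla {\myblue f}(z^k), x^{k+1}-\xs\ra - \tfrac{1}{2\eta_x}\|x^{k+1}-x^k\|^2 .
\]

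Then I will subtract $0 = \la \partial{\mygreen g}(\xs) + \nabla{\myblue f}(\zs), x^{k+1}-\xs\ra$, which is valid by the optimality condition \eqref{eq:main_prob_apgd_optimality_condition}. Lemma~\ref{lem:strong_convexity_bregman_non_diff} bounds the resulting subgradient difference: $\la \partial{\mygreen g}(x^{k+1})-\partial{\mygreen g}(\xs), x^{k+1}-\xs\ra \ge \mu_{\mygreen g}\|x^{k+1}-\xs\|^2$.

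What remains is the gradient inner product, which I split algebraically as
\[
2\nabla {\myblue f}(z^{k+1}) - \nabla {\myblue f}(z^k) - \nabla{\myblue f}(\zs) = \big(\nabla {\myblue f}(z^{k+1}) - \nabla{\myblue f}(\zs)\big) + \big(\nabla {\myblue f}(z^{k+1}) - \nabla {\myblue f}(z^k)\big).
\]
This produces exactly the two negative inner products in \eqref{eq:descent_lemma_apge_x}.

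There is no real obstacle here; the only care needed is the bookkeeping of the split. Unlike the \algname{APGD} case, no rewriting with $x^{k+1}-x^k$ is required, because the target inequality keeps both terms paired with $x^{k+1}-\xs$.
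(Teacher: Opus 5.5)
Your proposal is correct and follows the paper's proof essentially step for step. You use the prox optimality condition, expand $\frac{1}{2\eta_x}\|x^{k+1}-\xs\|^2$, substitute the optimality condition, invoke strong monotonicity via Lemma~\ref{lem:strong_convexity_bregman_non_diff}, and split the reflected gradient into the two inner products paired with $x^{k+1}-\xs$.
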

\begin{proof}
    According to Line $4$ (right) of Algorithm~\ref{alg:APGD_APGE}, we have 
    \begin{equation*}
        x^{k+1} = \prox_{\eta_x {\mygreen g}}\left(x^k - \eta_x(2\nabla {\myblue f}(z^{k+1}) - \nabla {\myblue f}(z^k))\right) = x^k -\eta_x\left(\partial{\mygreen g}(x^{k+1}) + (2\nabla {\myblue f}(z^{k+1}) - \nabla {\myblue f}(z^k))\right).
    \end{equation*}
    Using this, we obtain
    \begin{eqnarray}
        \frac{1}{2\eta_x}\|x^{k+1}-\xs\|^2 &=& \frac{1}{2\eta_x}\|x^{k}-\xs\|^2 + \frac{1}{\eta_x}\la x^{k+1}-x^k, x^{k+1}-\xs\ra  - \frac{1}{2\eta_x}\|x^{k+1}-x^k\|^2 \notag\\
    &=& \frac{1}{2\eta_x}\|x^{k}-\xs\|^2 - \la \partial{\mygreen g}(x^{k+1}) + \nabla {\myblue f}(z^{k+1}), x^{k+1}-\xs\ra  - \frac{1}{2\eta_x}\|x^{k+1}-x^k\|^2 \notag\\
    && - \la \nabla {\myblue f}(z^{k+1}) - \nabla {\myblue f}(z^k), x^{k+1}-\xs\ra.\label{eq:descent_lemma_apge_x_proof_1}
\end{eqnarray}
Plugging in optimality conditions $\partial{\mygreen g}(\xs) + \nabla {\myblue f}(\zs) = 0$, we derive 
\begin{eqnarray*}
    \frac{1}{2\eta_x}\|x^{k+1}-\xs\|^2 &=& \frac{1}{2\eta_x}\|x^{k}-\xs\|^2 - \la \partial{\mygreen g}(x^{k+1}) - \partial{\mygreen g}(\xs), x^{k+1}-\xs\ra  - \frac{1}{2\eta_x}\|x^{k+1}-x^k\|^2\\
    && - \la \nabla {\myblue f} (z^{k+1}) - \nabla {\myblue f} (\zs), x^{k+1}-\xs\ra - \la \nabla {\myblue f}(z^{k+1}) - \nabla {\myblue f}(z^k), x^{k+1}-\xs\ra.
\end{eqnarray*}
Applying Lemma~\ref{lem:strong_convexity_bregman_non_diff}, we derive \eqref{eq:descent_lemma_apge_x}.
\end{proof}

Finally, we present the descent lemma for $z$-update.
\begin{lemma}
    \label{lem:descent_lemma_apge_z}
    Let $(\xs,\ys,\zs)$ be a solution of \eqref{eq:main_prob_apgd_optimality_condition}. 
   Assume the sequence $\{z^k\}_{k\geq 0}$ is generated by the  update rule
    \begin{equation*}
        z^{k+1}= z^k -\eta_z\left(z^{k+1}- x^k\right). \footnote{For example, see Line $3$ (right) of Algorithm~\ref{alg:APGD_APGE}.}
    \end{equation*}
    Then, for any constant $a_z > 0$ and any integer $k \geq 0$, it holds
    \begin{eqnarray}
        \frac{1}{\eta_z} D_{\myblue f}(z^{k+1};\zs) &\leq&\frac{1}{\eta_z} D_{\myblue f}(z^k;\zs) - D_{\myblue f}(z^{k+1};\zs) - \frac{1-a_z}{\eta_z}D_{\myblue f}(z^k;z^{k+1})\notag\\
        &&  + \la \nabla {\myblue f}(z^{k}) - \nabla{\myblue f}(\zs), x^{k} - \xs\ra + \la \nabla {\myblue f}(z^{k+1}) - \nabla{\myblue f}(z^k), x^{k+1} - \xs\ra \notag\\
        &&- D_{\myblue f}(\zs; z^{k+1})+ L_{\myblue f}\eta_x\eta_z  \frac{1}{2a_z\eta_x}\|x^{k+1}-x^k\|^2. \label{eq:descent_lemma_apge_z}
    \end{eqnarray}
\end{lemma}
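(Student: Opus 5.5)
I will mirror the proof of Lemma~\ref{lem:descent_lemma_apgd_z}, changing only how the $z$-update is substituted. The plan is to apply the three-point identity (Lemma~\ref{lem:magic_identity}) with $\phi = {\myblue f}$, $z^+ = z^{k+1}$, $z = z^k$ and the reference point $\zs$, and then divide by $\eta_z$. This gives
\begin{equation*}
    \frac{1}{\eta_z} D_{\myblue f}(z^{k+1};\zs) = \frac{1}{\eta_z} D_{\myblue f}(z^{k};\zs) + \frac{1}{\eta_z}\la \nabla {\myblue f}(z^{k+1}) - \nabla{\myblue f}(\zs), z^{k+1}-z^k\ra - \frac{1}{\eta_z} D_{\myblue f}(z^{k};z^{k+1}).
\end{equation*}
Next I substitute $z^{k+1}-z^k = -\eta_z(z^{k+1}-x^k)$ and use the optimality condition $\xs = \zs$, writing $z^{k+1}-x^k = (z^{k+1}-\zs) - (x^k - \xs)$. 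The middle term then becomes
\begin{equation*}
    -\la \nabla {\myblue f}(z^{k+1}) - \nabla{\myblue f}(\zs), z^{k+1}-\zs\ra + \la \nabla {\myblue f}(z^{k+1}) - \nabla{\myblue f}(\zs), x^{k}-\xs\ra.
\end{equation*}
The first inner product is exactly $D_{\myblue f}(z^{k+1};\zs) + D_{\myblue f}(\zs;z^{k+1})$. This follows from adding the two Bregman definitions, and it produces the terms $-D_{\myblue f}(z^{k+1};\zs) - D_{\myblue f}(\zs;z^{k+1})$ in \eqref{eq:descent_lemma_apge_z}.

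The second inner product has to be rearranged to match the target, which pairs $\nabla{\myblue f}(z^k)-\nabla{\myblue f}(\zs)$ with $x^k-\xs$ and $\nabla{\myblue f}(z^{k+1})-\nabla{\myblue f}(z^k)$ with $x^{k+1}-\xs$. I will split $\nabla {\myblue f}(z^{k+1}) - \nabla{\myblue f}(\zs) = (\nabla {\myblue f}(z^{k}) - \nabla{\myblue f}(\zs)) + (\nabla {\myblue f}(z^{k+1}) - \nabla{\myblue f}(z^k))$. This yields $\la \nabla {\myblue f}(z^{k}) - \nabla{\myblue f}(\zs), x^{k}-\xs\ra$, which appears in the target, plus $\la \nabla {\myblue f}(z^{k+1}) - \nabla{\myblue f}(z^k), x^{k}-\xs\ra$. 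I then write $x^k - \xs = (x^{k+1}-\xs) - (x^{k+1}-x^k)$. The first piece gives the target term $\la \nabla {\myblue f}(z^{k+1}) - \nabla{\myblue f}(z^k), x^{k+1}-\xs\ra$, and the remainder is $-\la \nabla {\myblue f}(z^{k+1}) - \nabla{\myblue f}(z^k), x^{k+1}-x^k\ra$.

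The only genuine estimate is for this remainder, which plays the role of $-\cT_1$. I would bound it exactly as in \eqref{eq:descent_lemma_apgd_z_proof_3}, since the sign does not matter there. First apply Young's inequality with constant $C = a_z/(L_{\myblue f}\eta_z)$. Then use Lemma~\ref{lem:smoothness_bregman} in the form $\|\nabla {\myblue f}(z^{k+1}) - \nabla {\myblue f}(z^k)\|^2 \le 2L_{\myblue f} D_{\myblue f}(z^k;z^{k+1})$. Together these give
\begin{equation*}
    -\la \nabla {\myblue f}(z^{k+1}) - \nabla{\myblue f}(z^k), x^{k+1}-x^k\ra \le \frac{a_z}{\eta_z}D_{\myblue f}(z^k;z^{k+1}) + L_{\myblue f}\eta_x\eta_z\frac{1}{2a_z\eta_x}\|x^{k+1}-x^k\|^2.
\end{equation*}
Finally, I combine the $D_{\myblue f}(z^k;z^{k+1})$ terms into $-\frac{1-a_z}{\eta_z}D_{\myblue f}(z^k;z^{k+1})$, and this yields \eqref{eq:descent_lemma_apge_z}.

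The main obstacle is only bookkeeping. The inner products must be decomposed so that they match the pairings used by the APGE Lyapunov function with the "$+$" sign, and so that they cancel against Lemma~\ref{lem:descent_lemma_apge_x} later.
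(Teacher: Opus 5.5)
Your proposal is correct and follows the paper's proof essentially step for step: the three-point identity, then substituting the $z$-update together with $\xs=\zs$, then splitting the inner product into the two target pairings plus $-\cT_1$, and finally bounding that remainder by Young's inequality with $C = a_z/(L_{\myblue f}\eta_z)$ and Lemma~\ref{lem:smoothness_bregman}. Your remark that the bound \eqref{eq:descent_lemma_apgd_z_proof_3} holds equally for $-\cT_1$, since the sign does not matter in Young's inequality, makes precise a sign step that the paper passes over when it ``plugs in'' that bound.
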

\begin{proof}
By Lemma~\ref{lem:magic_identity}, we have 
\begin{eqnarray}
    \frac{1}{\eta_z} D_{{\myblue f}}(z^{k+1};\zs) &=& \frac{1}{\eta_z} D_{{\myblue f}}(z^{k};\zs) + \frac{1}{\eta_z}\la \nabla {\myblue f}(z^{k+1}) - \nabla{\myblue f}(\zs), z^{k+1} - z^{k}\ra - \frac{1}{\eta_z} D_{{\myblue f}}(z^{k};z^{k+1})\notag\\
    &=& \frac{1}{\eta_z} D_{{\myblue f}}(z^{k};\zs) - \frac{1}{\eta_z}\la \nabla {\myblue f}(z^{k+1}) - \nabla{\myblue f}(\zs), \eta_z(z^{k+1}- x^{k})\ra - \frac{1}{\eta_z} D_{{\myblue f}}(z^{k};z^{k+1}), \label{eq:descent_lemma_apge_z_proof_1}
\end{eqnarray}
where in the last equation we used the update rule for $z^{k+1}$.
Plugging in the optimality condition $\xs - \zs = 0$ in \eqref{eq:descent_lemma_apge_z_proof_1}, we have 
\begin{eqnarray}
    \frac{1}{\eta_z} D_{{\myblue f}}(z^{k+1};\zs) &=& \frac{1}{\eta_z} D_{{\myblue f}}(z^{k};\zs) - \la \nabla {\myblue f}(z^{k+1}) - \nabla{\myblue f}(\zs), z^{k+1} -\zs \ra - \frac{1}{\eta_z} D_{{\myblue f}}(z^{k};z^{k+1}) \notag\\
    && + \la \nabla {\myblue f}(z^{k+1}) - \nabla{\myblue f}(\zs), x^{k} - \xs\ra \notag\\
    &=& \frac{1}{\eta_z} D_{{\myblue f}}(z^{k};\zs) - D_{{\myblue f}}(z^{k+1};\zs) -D_{{\myblue f}}(\zs;z^{k+1}) - \frac{1}{\eta_z} D_{{\myblue f}}(z^{k};z^{k+1})\notag\\
    && + \la \nabla {\myblue f}(z^{k}) - \nabla{\myblue f}(\zs), x^{k} - \xs\ra +  \la \nabla {\myblue f}(z^{k+1}) - \nabla {\myblue f}(z^{k}), x^{k+1} - \xs\ra\notag\\
    && - \underbrace{\la \nabla {\myblue f}(z^{k+1}) - \nabla{\myblue f}(z^k), x^{k+1}-x^k\ra}_{= \cT_1}. \label{eq:descent_lemma_apge_z_proof_2}
\end{eqnarray}
Plugging in~\eqref{eq:descent_lemma_apgd_z_proof_3} into~\eqref{eq:descent_lemma_apge_z_proof_2}, we obtain \eqref{eq:descent_lemma_apge_z}.
\end{proof}

\subsection{Proof of Theorem~\ref{th:apgd_apge_conv}}
\label{sec:proof_APGD_APGE}

\paragraph{Convergence rate of \algname{APGD}.} Recall the Lyapunov function for \algname{APGD} is defined as (see Lemma~\ref{lem:lyaponov_func_apgd_apge})
\begin{equation*}
    \Phi_{k} \eqdef \frac{1}{2\eta_x}\|x^{k}-\xs\|^2 +  \frac{1}{\eta_z} D_{{\myblue f}}(z^{k};\zs) - \la \nabla {\myblue f}(z^{k}) - \nabla{\myblue f}(\zs), x^{k} - \xs\ra.
\end{equation*}
Applying Lemmas~\ref{lem:descent_lemma_apgd_x} and \ref{lem:descent_lemma_apgd_z} with  $a_z = 1$, we obtain 
\begin{eqnarray*}
    \Phi_{k+1}  &\leq&  \frac{1}{2\eta_x}\|x^{k}-\xs\|^2   - \frac{1}{2\eta_x}\|x^{k+1}-x^k\|^2 - \mu_{\mygreen g}\|x^{k+1} - \xs\|^2\\
    && -  \la \nabla {\myblue f} (z^k) - \nabla {\myblue f} (\zs), x^{k}-\xs\ra -  \la \nabla {\myblue f} (z^k) - \nabla {\myblue f} (\zs), x^{k+1}-x^k\ra\\
    && + \frac{1}{\eta_z} D_{{\myblue f}}(z^{k};\zs) -D_{\myblue f}(z^{k+1};\zs) + L_{\myblue f}\eta_x\eta_z \frac{1}{2\eta_x}\|x^{k+1}-x^k \|^2 \\
    && + \la \nabla {\myblue f}(z^{k}) - \nabla{\myblue f}(\zs), x^{k+1}-x^k \ra \\
    &\leq& \Phi_k - \mu_{\mygreen g}\|x^{k+1} - \xs\|^2 - D_{\myblue f}(z^{k+1};\zs),
\end{eqnarray*}
where in the last inequality we used $L_{\myblue f}\eta_x\eta_z \leq 1$.

\paragraph{Convergence rate of \algname{APGE}.} 
Recall the Lyapunov function for \algname{APGE} is defined as (see Lemma~\ref{lem:lyaponov_func_apgd_apge})
\begin{equation*}
    \Phi_{k+1} \eqdef \frac{1}{2\eta_x}\|x^{k+1}-\xs\|^2 +  \frac{1}{\eta_z} D_{{\myblue f}}(z^{k+1};\zs) + \la \nabla {\myblue f}(z^{k+1}) - \nabla{\myblue f}(\zs), x^{k+1} - \xs\ra.
\end{equation*}
Applying Lemmas~\ref{lem:descent_lemma_apge_x} and \ref{lem:descent_lemma_apge_z} with $a_z = 1$, we obtain
\begin{eqnarray*}
    \Phi_{k+1}  &\leq& \frac{1}{2\eta_x}\|x^{k}-\xs\|^2 - \mu_{\mygreen g}\|x^{k+1} - \xs\|^2  - \frac{1}{2\eta_x}\|x^{k+1}-x^k\|^2 - \la \nabla {\myblue f} (z^{k+1}) - \nabla {\myblue f} (z^k), x^{k+1}-\xs\ra  \\
    && + \frac{1}{\eta_z} D_{{\myblue f}}(z^{k};\zs) -D_{\myblue f}(z^{k+1};\zs) + L_{\myblue f}\eta_x\eta_z \frac{1}{2\eta_x}\|x^{k+1}-x^k \|^2\\
    &&  + \la \nabla {\myblue f}(z^{k}) - \nabla{\myblue f}(\zs), x^{k}-\xs \ra  + \la \nabla {\myblue f}(z^{k+1}) - \nabla{\myblue f}(z^k), x^{k+1}-\xs \ra \\
    &\leq& \Phi_k - \mu_{\mygreen g}\|x^{k+1} - \xs\|^2 - D_{\myblue f}(z^{k+1};\zs),
\end{eqnarray*}
where in the last inequality we used $L_{\myblue f}\eta_x\eta_z \leq 1$.

Finally, applying Lemma~\ref{lem:lyaponov_func_apgd_apge}, we get for both algorithms (\algname{APGD} and \algname{APGE}) the same relation
\begin{eqnarray*}
    \Phi_{k+1}  \leq  \Phi_k - \min\left\{\mu_{\mygreen g}\eta_x, \frac{\eta_z}{2}\right\}\Phi_{k+1} &\Rightarrow& \min\left\{1+\mu_{\mygreen g}\eta_x, \frac{2+ \eta_z}{2}\right\}\Phi_{k+1} \leq \Phi_k.
\end{eqnarray*}

Thus, denoting $\theta = \max\left\{\frac{1}{1+\mu_{{\mygreen g}}\eta_x}, \frac{2}{2+\eta_z}\right\}$, we have 
\begin{eqnarray}
    \label{eq:proof_thm_conv_apge_8}
    \Phi_{k+1} &\leq& \theta \Phi_{k} ~\leq~ \theta^{k+1} \Phi_0.
\end{eqnarray}

\subsection{Proof of Corollary~\ref{cor:apgd_apge_complexity}}
\label{sec:proof_of_cor_apgd_apde_complexity}
We aim to derive the iteration complexity of Algorithm~\ref{alg:APGD_APGE} as Corollary~\ref{cor:apgd_apge_complexity}.
In view of \eqref{eq:proof_thm_conv_apge_8}, we have 
\begin{equation}
    k \geq \cO\left(\frac{1}{1-\theta}\log \frac{1}{\varepsilon}\right) \quad\Rightarrow\quad \Phi_k \leq \varepsilon \Phi_0.
\end{equation}
By the definition of $\theta = \max\left\{\frac{1}{1+\mu_{{\mygreen g}}\eta_x}, \frac{2}{2+\eta_z}\right\}$, we have 
\begin{eqnarray*}
    \cO\left(\frac{1}{1-\theta}\log\frac{1}{\varepsilon}\right) &=& \cO\left(\max\left\{\frac{1+\mu_{{\mygreen g}}\eta_x}{\mu_{{\mygreen g}}\eta_x}, \frac{2+\eta_z}{\eta_z}\right\}\log\frac{1}{\varepsilon}\right)\\
    &=& \cO\left(\max\left\{1+\frac{1}{\mu_{{\mygreen g}}\eta_x}, 1+\frac{2}{\eta_z}\right\}\log\frac{1}{\varepsilon}\right)\\
    &=& \cO\left( \left(1 +\sqrt{\frac{L_{\myblue f}}{\mu_{{\mygreen g}}}}\right)\log\frac{1}{\varepsilon}\right).
\end{eqnarray*}
This proves the statement of Corollary~\ref{cor:apgd_apge_complexity}.

\section{Additional Proofs of Section~\ref{sec:ACV_and_APDTR}}
\label{sec:proof_ACV_and_APDTR}

\begin{lemma}
    \label{lem:lyaponov_func_acv_apdtr}
    Define the Lyapunov function $\Psi_{k}$ for every $k \geq 0$ as 
    \begin{eqnarray}
        \Psi_{k} &\eqdef& \frac{1}{2\eta_x}\|x^{k}-\xs\|^2 + \frac{1}{2\eta_y}\|y^{k}-\ys\|^2+  \frac{1}{\eta_z} D_{{\myblue f}}(z^{k};\zs) \notag\\
        && \pm \la y^{k} - \ys, \mathsf{K}(x^{k}-\xs)\ra \pm \la \nabla {\myblue f}(z^{k}) - \nabla{\myblue f}(\zs), x^{k} - \xs\ra. \label{eq:lyaponov_func_acv_apdtr}
    \end{eqnarray}
    Then, if $\|\mathsf{K}\|^2\eta_x\eta_y + L_{\myblue f} \eta_x\eta_z \leq 1$, it holds that
    \begin{equation}
        \label{eq:lyaponov_func_acv_apdtr_1}
        0 \leq \Psi_{k} \leq \frac{1}{\eta_x}\|x^{k}-\xs\|^2 + \frac{1}{\eta_y}\|y^{k}-\ys\|^2 +  \frac{2}{\eta_z} D_{{\myblue f}}(z^{k};\zs).
    \end{equation}
\end{lemma}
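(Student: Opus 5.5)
The plan mirrors the proof of Lemma~\ref{lem:lyaponov_func_apgd_apge}, with the $\frac{1}{2\eta_x}\|x^k-\xs\|^2$ budget split between the two cross terms instead of being spent entirely on one. We bound each cross term in absolute value by Young's inequality with weights matched to the stepsizes, so the argument works uniformly for all four sign choices.

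\textbf{Step 1 (the $\mathsf{K}$ coupling term).} For any $\alpha>0$,
\[
|\la y^k-\ys, \mathsf{K}(x^k-\xs)\ra| \le \frac{1}{2\eta_y}\|y^k-\ys\|^2 + \frac{\eta_y\|\mathsf{K}\|^2}{2}\|x^k-\xs\|^2 .
\]
This consumes the whole $y$-term. In $x$ it costs $\frac{1}{2\eta_x}\cdot \|\mathsf{K}\|^2\eta_x\eta_y\,\|x^k-\xs\|^2$.

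\textbf{Step 2 (the gradient coupling term).} Write $s\eqdef L_{\myblue f}\eta_x\eta_z$. We use Young's inequality with a weight chosen so that the $x$-part costs exactly the fraction $s$ of $\frac{1}{2\eta_x}\|x^k-\xs\|^2$:
\[
|\la \nabla {\myblue f}(z^k)-\nabla {\myblue f}(\zs), x^k-\xs\ra| \le \frac{s}{2\eta_x}\|x^k-\xs\|^2 + \frac{\eta_x}{2s}\|\nabla {\myblue f}(z^k)-\nabla {\myblue f}(\zs)\|^2 .
\]
By Lemma~\ref{lem:smoothness_bregman}, the last term is at most
\[
\frac{\eta_x L_{\myblue f}}{s} D_{\myblue f}(z^k;\zs) = \frac{1}{\eta_z} D_{\myblue f}(z^k;\zs).
\]
This exactly consumes the Bregman term.

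\textbf{Step 3 (lower bound).} Adding the two estimates, the remaining $x$-coefficient is
\[
\frac{1}{2\eta_x}\bigl(1-\|\mathsf{K}\|^2\eta_x\eta_y - L_{\myblue f}\eta_x\eta_z\bigr)\ge 0
\]
under the hypothesis, which gives $\Psi_k\ge 0$.

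\textbf{Step 4 (upper bound).} The same two inequalities, now used with the opposite sign, bound each cross term from above. This gives
\[
\Psi_k \le \frac{1}{2\eta_x}(1+\|\mathsf{K}\|^2\eta_x\eta_y+L_{\myblue f}\eta_x\eta_z)\|x^k-\xs\|^2 + \frac{1}{\eta_y}\|y^k-\ys\|^2 + \frac{2}{\eta_z}D_{\myblue f}(z^k;\zs),
\]
and the hypothesis turns the first coefficient into $\frac{1}{\eta_x}$.

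The only delicate point is Step 2: the weight must scale with $s$ rather than be fixed at $1$ as in Lemma~\ref{lem:lyaponov_func_apgd_apge}, so that the two $x$-costs add to exactly the quantity controlled by the hypothesis. If $L_{\myblue f}=0$, that term vanishes and Step 2 is trivial.
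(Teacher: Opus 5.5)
Your proposal is correct and follows the paper's proof essentially verbatim. With $s=L_{\myblue f}\eta_x\eta_z$, your Young weights are exactly the paper's, namely $\frac{1}{2\eta_y}\|y^k-\ys\|^2+\frac{\eta_y\|\mathsf{K}\|^2}{2}\|x^k-\xs\|^2$ and $\frac{1}{2L_{\myblue f}\eta_z}\|\nabla {\myblue f}(z^k)-\nabla {\myblue f}(\zs)\|^2+\frac{L_{\myblue f}\eta_z}{2}\|x^k-\xs\|^2$; you then use Lemma~\ref{lem:smoothness_bregman} to absorb the gradient term into $\frac{1}{\eta_z}D_{\myblue f}(z^k;\zs)$ and the sign-flipped estimate for the upper bound, just as the paper does (the ``for any $\alpha>0$'' in Step~1 is a harmless leftover).
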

\begin{proof}
    By Fenchel-Young's inequality and Lemma~\ref{lem:smoothness_bregman}, we have 
    \begin{eqnarray*}
    \Psi_{k} &\geq& \frac{1}{2\eta_x}\|x^{k}-\xs\|^2 + \frac{1}{2\eta_y}\|y^{k}-\ys\|^2+  \frac{1}{\eta_z} D_{{\myblue f}}(z^{k};\zs) \notag\\
    && -\frac{1}{2\eta_y}\|y^k-\ys\|^2  - \frac{1}{2L_{\myblue f}\eta_z}\|\nabla {\myblue f}(z^{k}) - \nabla{\myblue f}(\zs)\|^2 -\left(\|\mathsf{K}\|^2\eta_x\eta_y + L_{\myblue f}\eta_x\eta_z\right)\frac{1}{2\eta_x}\|x^{k} - \xs\|^2\\
    &\geq& \frac{1}{2\eta_x}\|x^{k}-\xs\|^2 -\left(\|\mathsf{K}\|^2\eta_x\eta_y + L_{\myblue f}\eta_x\eta_z\right)\frac{1}{2\eta_x}\|x^{k} - \xs\|^2.
\end{eqnarray*}
Then, because $\|\mathsf{K}\|^2\eta_x\eta_y + L_{\myblue f} \eta_x\eta_z \leq 1$ , $\Psi_{k}$ is nonnegative for any $k \geq 0$.
By the same argument with opposite sign for bounding the inner product terms, we can prove that for any $k \geq 0$, we have
\begin{equation*}
    \Psi_k \leq \frac{1}{\eta_x}\|x^k-\xs\|^2 + \frac{1}{\eta_y}\|y^{k}-\ys\|^2 + \frac{2}{\eta_z} D_{\myblue f}(z^k; \zs).
\end{equation*}
This completes the proof.
\end{proof}

\subsection{Descent Lemmas for \algname{ACV-I} and \algname{ACV-II}}

First, we present descent lemmas for primal and dual variables of \algname{ACV-I} (Algorithm~\ref{alg:ACV}).

\begin{lemma}
    \label{lem:descent_lemma_acv_1_x}
    Let $(\xs,\ys,\zs)$ be a solution of \eqref{eq:main_optimality_condition}, and assume $\mygreen g$ is $\mu_{\mygreen g}$-strongly convex for some $\mu_{\mygreen g}\ge 0$. Then the iterates generated by \algname{ACV-I} (Algorithm~\ref{alg:ACV}) satisfy
    \begin{eqnarray}
        \frac{1}{2\eta_x}\|x^{k+1}-\xs\|^2&\leq& \frac{1}{2\eta_x}\|x^{k}-\xs\|^2 - \mu_{\mygreen g}\|x^{k+1}-\xs\|^2 -\frac{1}{2\eta_x}\|x^{k+1} -x^k\|^2 \notag\\
        && -\la \mathsf{K}^*(y^k - \ys), x^{k} - \xs\ra  -\la \mathsf{K}^*(y^k - \ys), x^{k+1} - x^{k}\ra \notag\\ 
        &&  -\la \nabla {\myblue f} (z^k)  - \nabla {\myblue f} (\zs) , x^{k} - \xs\ra -\la \nabla {\myblue f} (z^k)  - \nabla {\myblue f} (\zs) , x^{k+1} - x^k\ra. \label{eq:descent_lemma_acv_1_x_1}
    \end{eqnarray}
   Additionally, if ${\mygreen g}$ is $L_{\mygreen g}$-smooth,  then the iterates generated by \algname{ACV-I} (Algorithm~\ref{alg:ACV}) satisfy
    \begin{eqnarray}
        \frac{1}{2\eta_x}\|x^{k+1}-\xs\|^2&\leq& \frac{1}{2\eta_x}\|x^{k}-\xs\|^2 - \frac{\mu_{\mygreen g}}{2}\|x^{k+1}-\xs\|^2 - \frac{1}{2L_{\mygreen g}}\|\nabla {\mygreen g}(x^{k+1}) - \nabla {\mygreen g}(\xs)\|^2\notag\\ &&-\frac{1}{2\eta_x}\|x^{k+1} -x^k\|^2  -\la \mathsf{K}^*(y^k - \ys), x^{k} - \xs\ra  -\la \mathsf{K}^*(y^k - \ys), x^{k+1} - x^{k}\ra \notag\\
        &&  -\la \nabla {\myblue f} (z^k)  - \nabla {\myblue f} (\zs) , x^{k} - \xs\ra -\la \nabla {\myblue f} (z^k)  - \nabla {\myblue f} (\zs) , x^{k+1} - x^k\ra. 
        \label{eq:descent_lemma_acv_1_x_2}
    \end{eqnarray}
\end{lemma}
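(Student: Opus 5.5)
We follow the proof of Lemma~\ref{lem:descent_lemma_apgd_x} almost verbatim, now carrying the additional dual term $\mathsf{K}^*y^k$. By Line~3 (left) of Algorithm~\ref{alg:ACV}, the prox optimality condition gives
\[
x^{k+1} = x^k - \eta_x\left(\partial {\mygreen g}(x^{k+1}) + \mathsf{K}^*y^k + \nabla {\myblue f}(z^k)\right),
\]
with the same abuse of notation for subgradients. Expanding $\frac{1}{2\eta_x}\|x^{k+1}-\xs\|^2$ via the polarization identity gives
\[
\frac{1}{2\eta_x}\|x^{k+1}-\xs\|^2 = \frac{1}{2\eta_x}\|x^{k}-\xs\|^2 - \frac{1}{2\eta_x}\|x^{k+1}-x^k\|^2 - \la \partial{\mygreen g}(x^{k+1}) + \mathsf{K}^*y^k + \nabla {\myblue f}(z^k), x^{k+1}-\xs\ra .
\]
We then subtract $0 = \partial {\mygreen g}(\xs) + \mathsf{K}^*\ys + \nabla{\myblue f}(\zs)$, the first line of \eqref{eq:main_optimality_condition} with $u^\star=\nabla{\myblue f}(\zs)$. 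This splits the inner product into three pieces:
\[
\la \partial{\mygreen g}(x^{k+1})-\partial{\mygreen g}(\xs), x^{k+1}-\xs\ra, \qquad \la \mathsf{K}^*(y^k-\ys), x^{k+1}-\xs\ra, \qquad \la \nabla{\myblue f}(z^k)-\nabla{\myblue f}(\zs), x^{k+1}-\xs\ra .
\]

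For \eqref{eq:descent_lemma_acv_1_x_1}, we bound the first piece from below by $\mu_{\mygreen g}\|x^{k+1}-\xs\|^2$ using Lemma~\ref{lem:strong_convexity_bregman_non_diff}. For each of the other two pieces, we write $x^{k+1}-\xs = (x^k-\xs) + (x^{k+1}-x^k)$, which yields exactly the four inner-product terms in the statement.

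For \eqref{eq:descent_lemma_acv_1_x_2}, ${\mygreen g}$ is now differentiable, so $\partial{\mygreen g}=\nabla{\mygreen g}$. We only need to lower-bound $\la \nabla{\mygreen g}(x^{k+1})-\nabla{\mygreen g}(\xs), x^{k+1}-\xs\ra$ by
\[
\frac{\mu_{\mygreen g}}{2}\|x^{k+1}-\xs\|^2 + \frac{1}{2L_{\mygreen g}}\|\nabla{\mygreen g}(x^{k+1})-\nabla{\mygreen g}(\xs)\|^2 .
\]
To obtain this, we write the inner product as the symmetrized Bregman sum $D_{\mygreen g}(x^{k+1};\xs)+D_{\mygreen g}(\xs;x^{k+1})$. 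We then bound one divergence below by $\frac{\mu_{\mygreen g}}{2}\|x^{k+1}-\xs\|^2$ (Lemma~\ref{lem:strong_convexity_bregman}, or directly from \eqref{eq:mu_strongly_convex_def} when $\mu_{\mygreen g}=0$) and the other below by $\frac{1}{2L_{\mygreen g}}\|\nabla{\mygreen g}(x^{k+1})-\nabla{\mygreen g}(\xs)\|^2$ (Lemma~\ref{lem:smoothness_bregman}). The remaining terms are handled exactly as in the first part.

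The only step needing care is this last averaging: the two bounds must come from different Bregman divergences, and this is why $\mu_{\mygreen g}$ is halved in \eqref{eq:descent_lemma_acv_1_x_2}. Everything else is bookkeeping.
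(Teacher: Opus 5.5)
Your proposal is correct and follows essentially the same route as the paper. Both use the prox optimality condition, the expansion of $\frac{1}{2\eta_x}\|x^{k+1}-\xs\|^2$, subtraction of the first optimality condition, Lemma~\ref{lem:strong_convexity_bregman_non_diff} for \eqref{eq:descent_lemma_acv_1_x_1}, and Lemmas~\ref{lem:smoothness_bregman} and~\ref{lem:strong_convexity_bregman} for \eqref{eq:descent_lemma_acv_1_x_2}; the only difference is that you make explicit the splitting $\la \nabla {\mygreen g}(x^{k+1})-\nabla {\mygreen g}(\xs), x^{k+1}-\xs\ra = D_{\mygreen g}(x^{k+1};\xs)+D_{\mygreen g}(\xs;x^{k+1})$ and the case $\mu_{\mygreen g}=0$, which the paper leaves implicit.
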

\begin{proof}
    According to Line $3$ (left) of Algorithm~\ref{alg:ACV}:
    \begin{equation*}
        x^{k+1} = \prox_{\eta_x {\mygreen g}}\left(x^k - \eta_x \mathsf{K}^* y^k - \eta_x \nabla {\myblue f}(z^k) \right) = x^k -\eta_x\left(\partial {\mygreen g}(x^{k+1}) + \mathsf{K}^* y^k + \nabla {\myblue f}(z^k)\right),
    \end{equation*}
    we have 
    \begin{eqnarray}
        \frac{1}{2\eta_x}\|x^{k+1}-\xs\|^2 &=& \frac{1}{2\eta_x}\|x^{k}-\xs\|^2 +\frac{1}{\eta_x}\la x^{k+1} -x^k, x^{k+1} - \xs \ra -\frac{1}{2\eta_x}\|x^{k+1} -x^k\|^2 \notag\\
        &=&\frac{1}{2\eta_x}\|x^{k}-\xs\|^2 -\la \partial {\mygreen g} (x^{k+1}) + \mathsf{K}^*y^k + \nabla {\myblue f}(z^k), x^{k+1} - \xs \ra -\frac{1}{2\eta_x}\|x^{k+1} -x^k\|^2. \label{eq:proof_descent_lemma_acv_1_x_1}
    \end{eqnarray}   
    Plugging in the optimality condition $\partial{\mygreen g}(\xs) +\mathsf{K}^* \ys + \nabla {\myblue f} (\zs) = 0$ into \eqref{eq:proof_descent_lemma_acv_1_x_1}, we obtain 
    \begin{eqnarray}
        \frac{1}{2\eta_x}\|x^{k+1}-\xs\|^2 &=& \frac{1}{2\eta_x}\|x^{k}-\xs\|^2 -\la \partial {\mygreen g} (x^{k+1}) - \partial {\mygreen g} (\xs), x^{k+1} - \xs \ra -\frac{1}{2\eta_x}\|x^{k+1} -x^k\|^2\notag\\
        && -\la \mathsf{K}^* (y^k - \ys), x^{k+1} - \xs\ra -\la  \nabla {\myblue f} (z^k) - \nabla {\myblue f} (\zs) , x^{k+1} - \xs\ra. \label{eq:proof_descent_lemma_acv_1_x_2}
    \end{eqnarray}
    Using Lemma~\ref{lem:strong_convexity_bregman_non_diff}, we obtain 
    \begin{eqnarray}
        \frac{1}{2\eta_x}\|x^{k+1}-\xs\|^2 &\leq& \frac{1}{2\eta_x}\|x^{k}-\xs\|^2 - \mu_{{\mygreen g}}\|x^{k+1} - \xs \|^2 -\frac{1}{2\eta_x}\|x^{k+1} -x^k\|^2 \notag\\
        && -\la \mathsf{K}^*(y^k - \ys), x^{k+1} - \xs\ra -\la \nabla {\myblue f} (z^k)  - \nabla {\myblue f} (\zs) , x^{k+1} - \xs\ra. \label{eq:proof_descent_lemma_acv_1_x_3}
    \end{eqnarray}
    Rewriting \eqref{eq:proof_descent_lemma_acv_1_x_3}, this concludes the first part of the proof.
    
    If we assume that ${\mygreen g}$ is $L_{\mygreen g}$-smooth, then 
    $\partial {\mygreen g}(x) = \{ \nabla {\mygreen g}(x) \}$. Therefore, applying  Lemma~\ref{lem:smoothness_bregman} and Lemma~\ref{lem:strong_convexity_bregman}, we derive from \eqref{eq:proof_descent_lemma_acv_1_x_2} that
    \begin{eqnarray}
        \frac{1}{2\eta_x}\|x^{k+1}-\xs\|^2 &=& \frac{1}{2\eta_x}\|x^{k}-\xs\|^2 -\la \nabla{\mygreen g} (x^{k+1}) - \nabla{\mygreen g} (\xs), x^{k+1} - \xs \ra -\frac{1}{2\eta_x}\|x^{k+1} -x^k\|^2\notag\\
        && -\la \mathsf{K}^* (y^k - \ys), x^{k+1} - \xs\ra -\la  \nabla {\myblue f} (z^k) - \nabla {\myblue f} (\zs) , x^{k+1} - \xs\ra\\
        &\leq& \frac{1}{2\eta_x}\|x^{k}-\xs\|^2 - \frac{\mu_{\mygreen g}}{2}\|x^{k+1} - \xs \|^2 -\frac{1}{2L_{\mygreen g}}\|\nabla{\mygreen g} (x^{k+1}) - \nabla{\mygreen g} (\xs)\|^2\notag\\
        && -\frac{1}{2\eta_x}\|x^{k+1} -x^k\|^2 -\la \mathsf{K}^* (y^k - \ys), x^{k+1} - \xs\ra \notag\\
        &&-\la  \nabla {\myblue f} (z^k) - \nabla {\myblue f} (\zs) , x^{k+1} - \xs\ra \label{eq:proof_descent_lemma_acv_1_x_4}
    \end{eqnarray}
    Rewriting \eqref{eq:proof_descent_lemma_acv_1_x_4}, this concludes the second part of the proof.
\end{proof}

\begin{lemma}
    \label{lem:descent_lemma_acv_1_y}
    Let $(\xs,\ys,\zs)$ be a solution of \eqref{eq:main_optimality_condition}. Assume $\hstar$ is $\mu_{\hstar}$-strongly convex and the sequence $\{y^k\}_{k\geq 0}$ is defined recursively by the update rule
    \begin{equation*}
        y^{k+1} = \prox_{\eta_y\hstar}\left(y^k + \eta_y \mathsf{K}(2x^{k+1}-x^k)\right). \footnote{For example, the update rule in Line $4$ (left) of Algorithm~\ref{alg:ACV} or Line $5$ (left) of Algorithm~\ref{alg:APDTR} is of this form.}
    \end{equation*}
    Then, for any constant $a_y >0$ and any integer $k \geq 0$, it holds:
    \begin{eqnarray}
        \frac{1}{2\eta_y}\|y^{k+1}-\ys\|^2 &\leq& \frac{1}{2\eta_y}\|y^{k}-\ys\|^2 - \mu_{\hstar}\|y^{k+1}-\ys\|^2 -\frac{1-a_{y}}{2\eta_y}\|y^{k+1}-y^k\|^2 +\la y^{k+1}-\ys, \mathsf{K}(x^{k+1}-\xs)\ra \notag\\
        && +\la y^{k}-\ys, \mathsf{K}(x^{k+1}-x^k)\ra  + \|\mathsf{K}\|^2\eta_x\eta_y \frac{1}{2a_y\eta_x}\|x^{k+1}-x^k\|^2. \label{eq:descent_lemma_acv_1_y}
    \end{eqnarray}
\end{lemma}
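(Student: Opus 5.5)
The plan mirrors the $x$-descent argument of Lemma~\ref{lem:descent_lemma_acv_1_x}, applied to the dual variable. First, the optimality condition of the prox step gives
\[
y^{k+1} = y^k - \eta_y\bigl(\partial\hstar(y^{k+1}) - \mathsf{K}(2x^{k+1}-x^k)\bigr),
\]
with the same convention of a selected subgradient. We then expand the square as
\[
\frac{1}{2\eta_y}\|y^{k+1}-\ys\|^2 = \frac{1}{2\eta_y}\|y^k-\ys\|^2 + \frac{1}{\eta_y}\la y^{k+1}-y^k, y^{k+1}-\ys\ra - \frac{1}{2\eta_y}\|y^{k+1}-y^k\|^2 .
\]
Substituting the update into the middle term gives $-\la \partial\hstar(y^{k+1}) - \mathsf{K}(2x^{k+1}-x^k), y^{k+1}-\ys\ra$. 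Next we subtract the second optimality condition in \eqref{eq:main_optimality_condition}, namely $\partial\hstar(\ys)=\mathsf{K}\xs$. This produces $-\la \partial\hstar(y^{k+1})-\partial\hstar(\ys), y^{k+1}-\ys\ra$, which Lemma~\ref{lem:strong_convexity_bregman_non_diff} bounds above by $-\mu_{\hstar}\|y^{k+1}-\ys\|^2$. The remaining coupling term is $\la y^{k+1}-\ys, \mathsf{K}(2x^{k+1}-x^k-\xs)\ra$.

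We split this coupling term using $2x^{k+1}-x^k-\xs = (x^{k+1}-\xs) + (x^{k+1}-x^k)$. The first piece is exactly the term $\la y^{k+1}-\ys, \mathsf{K}(x^{k+1}-\xs)\ra$ appearing in \eqref{eq:descent_lemma_acv_1_y}. For the second piece we write $y^{k+1}-\ys = (y^k-\ys) + (y^{k+1}-y^k)$. This yields the target term $\la y^k-\ys, \mathsf{K}(x^{k+1}-x^k)\ra$ plus a cross term $\la y^{k+1}-y^k, \mathsf{K}(x^{k+1}-x^k)\ra$, in direct analogy with $\cT_1$ in Lemma~\ref{lem:descent_lemma_apgd_z}.

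The only genuine estimate is the bound on this cross term, and it is routine. By Cauchy--Schwarz, $\|\mathsf{K}v\|\le\|\mathsf{K}\|\|v\|$, and Fenchel--Young with weight $C=\eta_y/a_y$,
\[
\la y^{k+1}-y^k, \mathsf{K}(x^{k+1}-x^k)\ra \le \frac{a_y}{2\eta_y}\|y^{k+1}-y^k\|^2 + \frac{\eta_y\|\mathsf{K}\|^2}{2a_y}\|x^{k+1}-x^k\|^2 .
\]
The second term equals $\|\mathsf{K}\|^2\eta_x\eta_y\frac{1}{2a_y\eta_x}\|x^{k+1}-x^k\|^2$. Combining the first term with $-\frac{1}{2\eta_y}\|y^{k+1}-y^k\|^2$ gives $-\frac{1-a_y}{2\eta_y}\|y^{k+1}-y^k\|^2$. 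Collecting all pieces yields \eqref{eq:descent_lemma_acv_1_y}. The main care point is bookkeeping: the splitting must be arranged so that the inner products take exactly the stated forms, with $y^{k+1}$ paired with $x^{k+1}-\xs$ and $y^k$ paired with $x^{k+1}-x^k$, because these later telescope against the Lyapunov term $\pm\la y^k-\ys,\mathsf{K}(x^k-\xs)\ra$.
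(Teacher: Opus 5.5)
Your proof is correct and follows the paper's argument essentially step for step. You expand the square, substitute the prox optimality condition, subtract $\partial\hstar(\ys)=\mathsf{K}\xs$ and use strong monotonicity, split $y^{k+1}-\ys=(y^k-\ys)+(y^{k+1}-y^k)$ in the $\mathsf{K}(x^{k+1}-x^k)$ term, and bound the resulting cross term by Young's inequality with the operator-norm estimate. The only difference is cosmetic: your Young weight $C=\eta_y/a_y$ corresponds to the paper's $C=a_y/\eta_y$ under the reciprocal placement convention, and both give the identical bound.
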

\begin{proof}
    Using the update rule for $y^{k+1}$:
    \begin{equation*}
        y^{k+1} = \prox_{\eta_y\hstar}\left(y^k + \eta_y \mathsf{K}(2x^{k+1}-x^k)\right) = y^k - \eta_y(\partial\hstar(y^{k+1}) - \mathsf{K}(2x^{k+1}-x^k)),
    \end{equation*}
    we derive 
    \begin{eqnarray}
        \frac{1}{2\eta_y}\|y^{k+1}-\ys\|^2 &=& \frac{1}{2\eta_y}\|y^{k}-\ys\|^2 +\frac{1}{\eta_y}\la y^{k+1}-y^k, y^{k+1}-\ys\ra -\frac{1}{2\eta_y}\|y^{k+1}-y^k\|^2 \notag\\
        &=&\frac{1}{2\eta_y}\|y^{k}-\ys\|^2 - \la \partial \hstar (y^{k+1}) - \mathsf{K} (2x^{k+1}-x^k), y^{k+1}-\ys\ra -\frac{1}{2\eta_y}\|y^{k+1}-y^k\|^2 .\label{eq:proof_descent_lemma_acv_1_y_1}
    \end{eqnarray}
    Plugging in the optimality condition $\partial \hstar(\ys) - \mathsf{K}\xs = 0$ into \eqref{eq:proof_descent_lemma_acv_1_y_1}, we obtain
    \begin{eqnarray*}
        \frac{1}{2\eta_y}\|y^{k+1}-\ys\|^2 &=&  \frac{1}{2\eta_y}\|y^{k}-\ys\|^2 - \la \partial \hstar (y^{k+1}) - \partial\hstar (\ys), y^{k+1}-\ys\ra -\frac{1}{2\eta_y}\|y^{k+1}-y^k\|^2 \\
        && +\la y^{k+1}-\ys, \mathsf{K}(x^{k+1}-\xs)\ra +\la y^{k+1}-\ys, \mathsf{K}(x^{k+1}-x^k)\ra.
    \end{eqnarray*}
    Applying Lemma~\ref{lem:strong_convexity_bregman_non_diff}, we have 
    \begin{eqnarray}
        \frac{1}{2\eta_y}\|y^{k+1}-\ys\|^2 &\leq& \frac{1}{2\eta_y}\|y^{k}-\ys\|^2 - \mu_{\hstar}\|y^{k+1}-\ys\|^2 -\frac{1}{2\eta_y}\|y^{k+1}-y^k\|^2 \notag\\
        && +\la y^{k+1}-\ys, \mathsf{K}(x^{k+1}-\xs)\ra +\la y^{k}-\ys, \mathsf{K}(x^{k+1}-x^k)\ra \notag\\
        &&  + \underbrace{\la y^{k+1}- y^k, \mathsf{K}(x^{k+1}-x^k)\ra}_{\eqdef \cT_1}. \label{eq:proof_descent_lemma_acv_1_y_2}
    \end{eqnarray}
    To continue our proof, we need to bound $\cT_1$. By Fenchel-Young's inequality, we get 
    \begin{eqnarray*}
        \cT_1 \leq \frac{C}{2} \|y^{k+1}- y^k\|^2 + \frac{1}{2C}\|\mathsf{K}(x^{k+1}-x^k)\|^2.
    \end{eqnarray*}
    By the definition of the operator norm $\|\mathsf{K}\|$, we have 
    \begin{eqnarray}
        \cT_1 &\leq& \frac{C}{2} \|y^{k+1}- y^k\|^2 + \frac{\|\mathsf{K}\|^2}{2C}\|x^{k+1}-x^k\|^2 \notag\\
        &\overset{C = \nicefrac{a_y}{\eta_y}}{=}& \frac{a_y}{2\eta_y} \|y^{k+1}- y^k\|^2 + \|\mathsf{K}\|^2\eta_x\eta_y \frac{1}{2a_y\eta_x}\|x^{k+1}-x^k\|^2 \label{eq:proof_descent_lemma_acv_1_y_3}
    \end{eqnarray}
    Plugging in \eqref{eq:proof_descent_lemma_acv_1_y_3} into \eqref{eq:proof_descent_lemma_acv_1_y_2}, we obtain 
    \begin{eqnarray*}
        \frac{1}{2\eta_y}\|y^{k+1}-\ys\|^2 &\leq& \frac{1}{2\eta_y}\|y^{k}-\ys\|^2 - \mu_{\hstar}\|y^{k+1}-\ys\|^2 -\frac{1}{2\eta_y}\|y^{k+1}-y^k\|^2 \notag\\
        && +\la y^{k+1}-\ys, \mathsf{K}(x^{k+1}-\xs)\ra +\la y^{k}-\ys, \mathsf{K}(x^{k+1}-x^k)\ra \notag\\
        &&  +  \frac{a_y}{2\eta_y} \|y^{k+1}- y^k\|^2 + \|\mathsf{K}\|^2\eta_x\eta_y \frac{1}{2a_y\eta_x}\|x^{k+1}-x^k\|^2,
    \end{eqnarray*}
    which concludes the proof.
\end{proof}

Now we present descent lemmas for \algname{ACV-II} (Algorithm~\ref{alg:ACV}).

\begin{lemma}
    \label{lem:descent_lemma_acv_2_x}
    Let $(\xs,\ys,\zs)$ be a solution of \eqref{eq:main_optimality_condition}, and assume $\mygreen g$ is $\mu_{\mygreen g}$-strongly convex for some $\mu_{\mygreen g}\ge 0$. Then the iterates generated by \algname{ACV-II} (Algorithm~\ref{alg:ACV}) satisfy
    \begin{eqnarray}
        \frac{1}{2\eta_x}\|x^{k+1}-\xs\|^2&\leq& \frac{1}{2\eta_x}\|x^{k}-\xs\|^2 - \mu_{\mygreen g}\|x^{k+1}-\xs\|^2 -\frac{1}{2\eta_x}\|x^{k+1} -x^k\|^2 \notag\\
        && -\la \mathsf{K}^*(y^{k+1} - \ys), x^{k+1} - \xs\ra - \la \mathsf{K}^*(y^{k+1} - y^k), x^{k+1} - \xs\ra \notag\\
        &&  -\la \nabla {\myblue f} (z^k)  - \nabla {\myblue f} (\zs) , x^{k} - \xs\ra -\la \nabla {\myblue f} (z^k)  - \nabla {\myblue f} (\zs) , x^{k+1} - x^k\ra. \label{eq:descent_lemma_acv_2_x_1}
    \end{eqnarray}
    Additionally, if $\mygreen g$ is $L_{\mygreen g}$-smooth, the iterates generated by \algname{ACV-II} (Algorithm~\ref{alg:ACV}) satisfy
    \begin{eqnarray}
        \frac{1}{2\eta_x}\|x^{k+1}-\xs\|^2&\leq& \frac{1}{2\eta_x}\|x^{k}-\xs\|^2  -\frac{1}{2\eta_x}\|x^{k+1} -x^k\|^2 \notag\\
        && - \frac{\mu_{\mygreen g}}{2}\|x^{k+1}-\xs\|^2 - \frac{1}{2 L_{\mygreen g}}\|\nabla {\mygreen g}(x^{k+1}) -\nabla {\mygreen g}(\xs)\|^2 \notag\\
        && -\la \mathsf{K}^*(y^{k+1} - \ys), x^{k+1} - \xs\ra - \la \mathsf{K}^*(y^{k+1} - y^k), x^{k+1} - \xs\ra \notag\\
        &&  -\la \nabla {\myblue f} (z^k)  - \nabla {\myblue f} (\zs) , x^{k} - \xs\ra -\la \nabla {\myblue f} (z^k)  - \nabla {\myblue f} (\zs) , x^{k+1} - x^k\ra. \label{eq:descent_lemma_acv_2_x_2}
    \end{eqnarray}
\end{lemma}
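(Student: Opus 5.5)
The plan is to mirror the proof of Lemma~\ref{lem:descent_lemma_acv_1_x}, changing only the bookkeeping for the dual term. First, write the prox step in Line~4 (right) of Algorithm~\ref{alg:ACV} through its optimality condition:
\[
x^{k+1} = x^k - \eta_x\left(\partial {\mygreen g}(x^{k+1}) + \mathsf{K}^*(2y^{k+1}-y^k) + \nabla {\myblue f}(z^k)\right).
\]
Second, expand $\frac{1}{2\eta_x}\|x^{k+1}-\xs\|^2$ with the identity
\[
\|a\|^2 = \|b\|^2 + 2\la a-b, a\ra - \|a-b\|^2 .
\]
This turns the cross term into
\[
-\la \partial {\mygreen g}(x^{k+1}) + \mathsf{K}^*(2y^{k+1}-y^k) + \nabla {\myblue f}(z^k), x^{k+1}-\xs\ra .
\]
Third, subtract the first line of \eqref{eq:main_optimality_condition} in its second form, $0 = \partial{\mygreen g}(\xs) + \mathsf{K}^*\ys + \nabla {\myblue f}(\zs)$. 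This replaces each of the three pieces by its difference with the value at the solution.

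Next, split the dual piece exactly:
\[
\mathsf{K}^*(2y^{k+1}-y^k) - \mathsf{K}^*\ys = \mathsf{K}^*(y^{k+1}-\ys) + \mathsf{K}^*(y^{k+1}-y^k).
\]
Both resulting inner products are taken against $x^{k+1}-\xs$, which matches \eqref{eq:descent_lemma_acv_2_x_1}. For the gradient piece, split $x^{k+1}-\xs = (x^k-\xs) + (x^{k+1}-x^k)$, exactly as in \algname{ACV-I}. This gives the two $\nabla {\myblue f}$ inner products.

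For the ${\mygreen g}$ term there are two cases.
\begin{itemize}
\item \emph{General case.} Apply Lemma~\ref{lem:strong_convexity_bregman_non_diff} to bound $-\la \partial {\mygreen g}(x^{k+1}) - \partial {\mygreen g}(\xs), x^{k+1}-\xs\ra \le -\mu_{\mygreen g}\|x^{k+1}-\xs\|^2$. This yields \eqref{eq:descent_lemma_acv_2_x_1}.
\item \emph{Smooth case.} Subgradients are now gradients. Write the monotonicity term as $D_{\mygreen g}(x^{k+1};\xs) + D_{\mygreen g}(\xs;x^{k+1})$. Lower bound one Bregman divergence by $\frac{\mu_{\mygreen g}}{2}\|x^{k+1}-\xs\|^2$ using Lemma~\ref{lem:strong_convexity_bregman}. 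Lower bound the other by $\frac{1}{2L_{\mygreen g}}\|\nabla{\mygreen g}(x^{k+1})-\nabla{\mygreen g}(\xs)\|^2$ using Lemma~\ref{lem:smoothness_bregman}. This yields \eqref{eq:descent_lemma_acv_2_x_2}.
\end{itemize}
One point needs care here. Lemma~\ref{lem:strong_convexity_bregman} is stated for $\mu>0$. When $\mu_{\mygreen g}=0$, the inequality $D \ge \frac{\mu}{2}\|\cdot\|^2$ still holds trivially by convexity, so the argument covers that case too.

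There is no real obstacle, since every step is an identity or a direct appeal to an earlier lemma. The only place to be careful is the sign and index bookkeeping in the dual term. In \algname{ACV-I} the pairing used $y^k$ and the splitting was applied to the $x$-difference, whereas here the extrapolated dual point $2y^{k+1}-y^k$ must be split into $y^{k+1}-\ys$ plus $y^{k+1}-y^k$, paired with $x^{k+1}-\xs$. This matches the stated inequality exactly, and no Young-type inequality is needed at this stage.
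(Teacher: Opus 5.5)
Your proposal is correct and follows essentially the same route as the paper. You expand $\frac{1}{2\eta_x}\|x^{k+1}-\xs\|^2$ using the prox optimality condition of the $x$-update and subtract the first optimality condition. You then split $2y^{k+1}-y^k-\ys$ and $x^{k+1}-\xs$, and finish with Lemma~\ref{lem:strong_convexity_bregman_non_diff}, or in the smooth case with Lemmas~\ref{lem:smoothness_bregman} and~\ref{lem:strong_convexity_bregman}.

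Your explicit decomposition of the monotonicity term into $D_{{\mygreen g}}(x^{k+1};\xs)+D_{{\mygreen g}}(\xs;x^{k+1})$, and your remark on $\mu_{\mygreen g}=0$, make precise a step the paper leaves implicit. You also correctly cite Line~4 (right), whereas the paper's proof says ``left''.
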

\begin{proof}
    According to Line $4$ (left) of Algorithm~\ref{alg:ACV}:
    \begin{equation*}
        x^{k+1} = \text{prox}_{\eta_x {\mygreen g}}\left(x^k - \eta_x \mathsf{K}^* (2y^{k+1} -y^k) - \eta_x \nabla {\myblue f}(z^k) \right) = x^k -\eta_x\left(\partial {\mygreen g}(x^{k+1}) + \mathsf{K}^* (2y^{k+1} -y^k) + \nabla {\myblue f}(z^k)\right),
    \end{equation*}
    we have 
    \begin{eqnarray}
        \frac{1}{2\eta_x}\|x^{k+1}-\xs\|^2 &=& \frac{1}{2\eta_x}\|x^{k}-\xs\|^2 +\frac{1}{\eta_x}\la x^{k+1} -x^k, x^{k+1} - \xs \ra -\frac{1}{2\eta_x}\|x^{k+1} -x^k\|^2 \notag\\
        &=&\frac{1}{2\eta_x}\|x^{k}-\xs\|^2 -\la \partial {\mygreen g} (x^{k+1}) + \mathsf{K}^*(2y^{k+1} -y^k) + \nabla {\myblue f}(z^k), x^{k+1} - \xs \ra \notag\\
        && -\frac{1}{2\eta_x}\|x^{k+1} -x^k\|^2. \label{eq:proof_descent_lemma_acv_2_x_1}
    \end{eqnarray}
    Plugging in the optimality condition $\partial{\mygreen g}(\xs) +\mathsf{K}^* \ys + \nabla {\myblue f} (\zs) = 0$ into \eqref{eq:proof_descent_lemma_acv_2_x_1}, we obtain 
    \begin{eqnarray}
        \frac{1}{2\eta_x}\|x^{k+1}-\xs\|^2 &=& \frac{1}{2\eta_x}\|x^{k}-\xs\|^2 -\la \partial {\mygreen g} (x^{k+1}) - \partial {\mygreen g} (\xs), x^{k+1} - \xs \ra -\frac{1}{2\eta_x}\|x^{k+1} -x^k\|^2\notag\\
        && -\la \mathsf{K}^* (y^{k+1} - \ys), x^{k+1} - \xs\ra - \la \mathsf{K}^* (y^{k+1} - y^k), x^{k+1} - \xs\ra\notag\\
        && -\la  \nabla {\myblue f} (z^k) - \nabla {\myblue f} (\zs) , x^{k+1} - \xs\ra. \label{eq:proof_descent_lemma_acv_2_x_2}
    \end{eqnarray}
    Applying Lemma~\ref{lem:strong_convexity_bregman_non_diff}, we obtain 
    \begin{eqnarray}
        \frac{1}{2\eta_x}\|x^{k+1}-\xs\|^2 &\leq& \frac{1}{2\eta_x}\|x^{k}-\xs\|^2 - \mu_{{\mygreen g}}\|x^{k+1} - \xs \|^2 -\frac{1}{2\eta_x}\|x^{k+1} -x^k\|^2 \notag\\
        && -\la \mathsf{K}^* (y^{k+1} - \ys), x^{k+1} - \xs\ra - \la \mathsf{K}^* (y^{k+1} - y^k), x^{k+1} - \xs\ra\notag\\
        && -\la  \nabla {\myblue f} (z^k) - \nabla {\myblue f} (\zs) , x^{k+1} - \xs\ra. \label{eq:proof_descent_lemma_acv_2_x_3}
    \end{eqnarray}
    Rewriting \eqref{eq:proof_descent_lemma_acv_2_x_3}, this concludes the first part of the proof.

    If we assume that ${\mygreen g}$ is $L_{\mygreen g}$-smooth, we have that $\nabla {\mygreen g}(x) \in \partial {\mygreen g}(x)$. Then, applying  Lemma~\ref{lem:smoothness_bregman} and Lemma~\ref{lem:strong_convexity_bregman}, we derive from \eqref{eq:proof_descent_lemma_acv_2_x_2}:
    \begin{eqnarray}
        \frac{1}{2\eta_x}\|x^{k+1}-\xs\|^2 &=& \frac{1}{2\eta_x}\|x^{k}-\xs\|^2 -\la \nabla{\mygreen g} (x^{k+1}) - \nabla{\mygreen g} (\xs), x^{k+1} - \xs \ra -\frac{1}{2\eta_x}\|x^{k+1} -x^k\|^2\notag\\
        && -\la \mathsf{K}^* (y^{k+1} - \ys), x^{k+1} - \xs\ra - \la \mathsf{K}^* (y^{k+1} - y^k), x^{k+1} - \xs\ra\notag\\
        && -\la  \nabla {\myblue f} (z^k) - \nabla {\myblue f} (\zs) , x^{k+1} - \xs\ra\notag\\
        &\leq& \frac{1}{2\eta_x}\|x^{k}-\xs\|^2 - \frac{\mu_{\mygreen g}}{2}\|x^{k+1} - \xs \|^2 -\frac{1}{2L_{\mygreen g}}\|\nabla{\mygreen g} (x^{k+1}) - \nabla{\mygreen g} (\xs)\|^2\notag\\
        && -\frac{1}{2\eta_x}\|x^{k+1} -x^k\|^2 -\la \mathsf{K}^* (y^{k+1} - \ys), x^{k+1} - \xs\ra \notag\\
        && - \la \mathsf{K}^* (y^{k+1} - y^k), x^{k+1} - \xs\ra -\la  \nabla {\myblue f} (z^k) - \nabla {\myblue f} (\zs) , x^{k+1} - \xs\ra. \label{eq:proof_descent_lemma_acv_2_x_4}
    \end{eqnarray}
    Rewriting \eqref{eq:proof_descent_lemma_acv_2_x_4}, this concludes the second part of the proof.
\end{proof}

\begin{lemma}
    \label{lem:descent_lemma_acv_2_y}
    Let $(\xs,\ys,\zs)$ be a solution of \eqref{eq:main_optimality_condition}. Assume $\hstar$ is $\mu_{\hstar}$-strongly convex and the sequence $\{y^k\}_{k \geq 0}$ is defined by the update rule
    \begin{equation*}
        y^{k+1} = \prox_{\eta_y\hstar}\left(y^k + \eta_y \mathsf{K}x^k\right).\footnote{For example, the update rule in Line $3$ (right) of Algorithm~\ref{alg:ACV} or Line $3$ (right) of Algorithm~\ref{alg:APDTR} is of this form.}
    \end{equation*}
    Then, for any constant $a_y >0$ and any integer $k \geq 0$, it holds:
    \begin{eqnarray}
        \frac{1}{2\eta_y}\|y^{k+1}-\ys\|^2 &\leq& \frac{1}{2\eta_y}\|y^{k}-\ys\|^2 - \mu_{\hstar}\|y^{k+1}-\ys\|^2 -\frac{1-a_{y}}{2\eta_y}\|y^{k+1}-y^k\|^2 +\la y^{k}-\ys, \mathsf{K}(x^{k}-\xs)\ra \notag\\
        && +\la y^{k+1}-y^k, \mathsf{K}(x^{k+1}-\xs)\ra  + \|\mathsf{K}\|^2\eta_x\eta_y \frac{1}{2a_y\eta_x}\|x^{k+1}-x^k\|^2. \label{eq:descent_lemma_acv_2_y}
    \end{eqnarray}
\end{lemma}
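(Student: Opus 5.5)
I would follow the same template as the proof of Lemma~\ref{lem:descent_lemma_acv_1_y}, adapted to the non-extrapolated update $y^{k+1} = \prox_{\eta_y\hstar}(y^k + \eta_y\mathsf{K}x^k)$.

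\emph{Step 1: expand the squared distance.} Writing the prox step through its optimality condition gives $y^{k+1} = y^k - \eta_y(\partial\hstar(y^{k+1}) - \mathsf{K}x^k)$. The identity $\|a\|^2 = \|b\|^2 + 2\la a-b, a\ra - \|a-b\|^2$ with $a = y^{k+1}-\ys$, $b=y^k-\ys$ then yields
\begin{equation*}
\frac{1}{2\eta_y}\|y^{k+1}-\ys\|^2 = \frac{1}{2\eta_y}\|y^{k}-\ys\|^2 - \la \partial\hstar(y^{k+1}) - \mathsf{K}x^k, y^{k+1}-\ys\ra - \frac{1}{2\eta_y}\|y^{k+1}-y^k\|^2 .
\end{equation*}

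\emph{Step 2: subtract the optimality condition and use strong convexity.} I would insert $\partial\hstar(\ys) - \mathsf{K}\xs = 0$ from \eqref{eq:main_optimality_condition}. Lemma~\ref{lem:strong_convexity_bregman_non_diff} then bounds $-\la\partial\hstar(y^{k+1})-\partial\hstar(\ys), y^{k+1}-\ys\ra$ by $-\mu_{\hstar}\|y^{k+1}-\ys\|^2$. What remains is the coupling term $\la y^{k+1}-\ys, \mathsf{K}(x^k-\xs)\ra$.

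\emph{Step 3: regroup the coupling term.} This is the only real bookkeeping. The target inequality contains $\la y^k-\ys,\mathsf{K}(x^k-\xs)\ra$ and $\la y^{k+1}-y^k,\mathsf{K}(x^{k+1}-\xs)\ra$. I would write $y^{k+1}-\ys = (y^k-\ys) + (y^{k+1}-y^k)$ and $x^k-\xs = (x^{k+1}-\xs) - (x^{k+1}-x^k)$. This gives
\begin{equation*}
\la y^{k+1}-\ys, \mathsf{K}(x^k-\xs)\ra = \la y^{k}-\ys, \mathsf{K}(x^k-\xs)\ra + \la y^{k+1}-y^k, \mathsf{K}(x^{k+1}-\xs)\ra - \la y^{k+1}-y^k, \mathsf{K}(x^{k+1}-x^k)\ra .
\end{equation*}

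\emph{Step 4: bound the cross term.} The leftover term $-\la y^{k+1}-y^k,\mathsf{K}(x^{k+1}-x^k)\ra$ is handled exactly as $\cT_1$ in \eqref{eq:proof_descent_lemma_acv_1_y_3}. Young's inequality with $C = a_y/\eta_y$, together with $\|\mathsf{K}(x^{k+1}-x^k)\|\le\|\mathsf{K}\|\|x^{k+1}-x^k\|$, bounds it by
\begin{equation*}
\frac{a_y}{2\eta_y}\|y^{k+1}-y^k\|^2 + \|\mathsf{K}\|^2\eta_x\eta_y\frac{1}{2a_y\eta_x}\|x^{k+1}-x^k\|^2 .
\end{equation*}
The sign of the term does not matter for this bound. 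Combining it with the $-\frac{1}{2\eta_y}\|y^{k+1}-y^k\|^2$ from Step 1 gives the factor $-\frac{1-a_y}{2\eta_y}$ and yields \eqref{eq:descent_lemma_acv_2_y}.

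I expect no genuine obstacle. Step 3, choosing the decomposition that produces exactly the two inner products appearing in the statement, is the only place needing care.
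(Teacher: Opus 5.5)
Your proposal is correct and follows the paper's proof essentially step by step. Both proofs use the same ingredients: the same expansion of $\frac{1}{2\eta_y}\|y^{k+1}-\ys\|^2$ via the prox optimality condition, the substitution $\partial\hstar(\ys)=\mathsf{K}\xs$ combined with Lemma~\ref{lem:strong_convexity_bregman_non_diff}, the same add-and-subtract regrouping of $\la y^{k+1}-\ys,\mathsf{K}(x^k-\xs)\ra$, and the same Young bound on $-\cT_1$ with $C=a_y/\eta_y$ as in \eqref{eq:proof_descent_lemma_acv_1_y_3}.
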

\begin{proof}
    Using the update rule for the sequence $y^k$,
    \begin{equation*}
        y^{k+1} = \prox_{\eta_y\hstar}\left(y^k + \eta_y \mathsf{K}x^k\right) = y^k - \eta_y(\partial\hstar(y^{k+1}) - \mathsf{K}x^k),
    \end{equation*}
    we derive 
    \begin{eqnarray}
        \frac{1}{2\eta_y}\|y^{k+1}-\ys\|^2 &=& \frac{1}{2\eta_y}\|y^{k}-\ys\|^2 +\frac{1}{\eta_y}\la y^{k+1}-y^k, y^{k+1}-\ys\ra -\frac{1}{2\eta_y}\|y^{k+1}-y^k\|^2 \notag\\
        &=&\frac{1}{2\eta_y}\|y^{k}-\ys\|^2 - \la \partial \hstar (y^{k+1}) - \mathsf{K} x^k, y^{k+1}-\ys\ra -\frac{1}{2\eta_y}\|y^{k+1}-y^k\|^2 .\label{eq:proof_descent_lemma_acv_2_y_1}
    \end{eqnarray}
    Plugging in the optimality condition $\partial \hstar(\ys) - \mathsf{K}\xs = 0$ into \eqref{eq:proof_descent_lemma_acv_2_y_1}, we obtain
    \begin{eqnarray*}
        \frac{1}{2\eta_y}\|y^{k+1}-\ys\|^2 &=&  \frac{1}{2\eta_y}\|y^{k}-\ys\|^2 - \la \partial \hstar (y^{k+1}) - \partial\hstar (\ys), y^{k+1}-\ys\ra -\frac{1}{2\eta_y}\|y^{k+1}-y^k\|^2 \\
        && +\la y^{k+1}-\ys, \mathsf{K}(x^{k}-\xs)\ra.
    \end{eqnarray*}
    Applying Lemma~\ref{lem:strong_convexity_bregman_non_diff}, we have 
    \begin{eqnarray}
        \frac{1}{2\eta_y}\|y^{k+1}-\ys\|^2 &\leq& \frac{1}{2\eta_y}\|y^{k}-\ys\|^2 - \mu_{\hstar}\|y^{k+1}-\ys\|^2 -\frac{1}{2\eta_y}\|y^{k+1}-y^k\|^2 \notag\\
        && +\la y^{k}-\ys, \mathsf{K}(x^{k}-\xs)\ra + \la y^{k+1}-y^k, \mathsf{K}(x^{k+1}-\xs)\ra\notag\\
        &&  - \underbrace{\la y^{k+1}- y^k, \mathsf{K}(x^{k+1}-x^k)\ra}_{\eqdef \cT_1}. \label{eq:proof_descent_lemma_acv_2_y_2}
    \end{eqnarray}
    Plugging in \eqref{eq:proof_descent_lemma_acv_1_y_3} into \eqref{eq:proof_descent_lemma_acv_2_y_2}, we obtain 
    \begin{eqnarray*}
        \frac{1}{2\eta_y}\|y^{k+1}-\ys\|^2 &\leq& \frac{1}{2\eta_y}\|y^{k}-\ys\|^2 - \mu_{\hstar}\|y^{k+1}-\ys\|^2 -\frac{1}{2\eta_y}\|y^{k+1}-y^k\|^2 \notag\\
        && +\la y^{k}-\ys, \mathsf{K}(x^{k}-\xs)\ra + \la y^{k+1}-y^k, \mathsf{K}(x^{k+1}-\xs)\ra \notag\\
        &&  +  \frac{a_y}{2\eta_y} \|y^{k+1}- y^k\|^2 + \|\mathsf{K}\|^2\eta_x\eta_y \frac{1}{2a_y\eta_x}\|x^{k+1}-x^k\|^2.
    \end{eqnarray*}
    
    Rearranging the terms from the previous inequality, we conclude the proof.
\end{proof}

\subsection{Descent Lemmas for \algname{APDTR-I} and \algname{APDTR-II}}
\label{sec:descent_lemmas_for_apdtr}

The descent lemmas for \algname{APDTR-I} and \algname{APDTR-II} are stated in the following lemmas. Note that the descent lemma for dual updates are the same as for \algname{ACV-I} and \algname{ACV-II}, we do not repeat it here. 

\begin{lemma}
    \label{lem:descent_lemma_apdtr_i_x}
    Let $(\xs,\ys,\zs)$ be a solution of \eqref{eq:main_optimality_condition}, and assume ${\mygreen g}$ is $\mu_{\mygreen g}$-strongly convex for some $\mu_{\mygreen g}\ge 0$. Then the iterates generated by \algname{APDTR-I} (Algorithm~\ref{alg:APDTR}) satisfy
    \begin{eqnarray}
        \frac{1}{2\eta_x}\|x^{k+1}-\xs\|^2 &\leq& \frac{1}{2\eta_x}\|x^{k}-\xs\|^2 - \mu_{{\mygreen g}}\|x^{k+1} - \xs \|^2 -\frac{1}{2\eta_x}\|x^{k+1} -x^k\|^2 \notag\\
        && -\la\mathsf{K}^*( y^{k} - \ys), x^{k} - \xs\ra -\la  \nabla {\myblue f} (z^{k+1}) - \nabla {\myblue f} (\zs) , x^{k+1} - \xs\ra \notag\\
        && -\la \mathsf{K}^*(y^{k}-\ys), x^{k+1} - x^k \ra  -\la \nabla {\myblue f}(z^{k+1}) -  \nabla {\myblue f}(z^k), x^{k+1} - \xs \ra. \label{eq:descent_lemma_apdtr_i_x_1}
    \end{eqnarray}
  Additionally,  if ${\mygreen g}$ is $L_{\mygreen g}$-smooth, then the iterates generated by \algname{APDTR-I} (Algorithm~\ref{alg:APDTR}) satisfy
    \begin{eqnarray}
        \frac{1}{2\eta_x}\|x^{k+1}-\xs\|^2 &\leq& \frac{1}{2\eta_x}\|x^{k}-\xs\|^2 - \frac{\mu_{{\mygreen g}}}{2}\|x^{k+1} - \xs \|^2 - \frac{1}{2L_{{\mygreen g}}}\|\nabla {\mygreen g}(x^{k+1}) - \nabla {\mygreen g}(\xs) \|^2 -\frac{1}{2\eta_x}\|x^{k+1} -x^k\|^2 \notag\\
        && -\la \mathsf{K}^*( y^{k} - \ys), x^{k} - \xs\ra -\la  \nabla {\myblue f} (z^{k+1}) - \nabla {\myblue f} (\zs) , x^{k+1} - \xs\ra \notag\\
        && -\la \mathsf{K}^*(y^{k}-\ys), x^{k+1} - x^k \ra  -\la \nabla {\myblue f}(z^{k+1}) -  \nabla {\myblue f}(z^k), x^{k+1} - \xs \ra. \label{eq:descent_lemma_apdtr_i_x_2}
    \end{eqnarray}
\end{lemma}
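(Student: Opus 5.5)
The plan follows the same template as Lemmas~\ref{lem:descent_lemma_acv_1_x} and \ref{lem:descent_lemma_apge_x}. First, I will write the $x$-update of \algname{APDTR-I} (Line 4, left, of Algorithm~\ref{alg:APDTR}) in subgradient form:
\begin{equation*}
x^{k+1} = x^k - \eta_x\big(\partial{\mygreen g}(x^{k+1}) + \mathsf{K}^*y^k + 2\nabla{\myblue f}(z^{k+1}) - \nabla{\myblue f}(z^k)\big),
\end{equation*}
with the same abuse of notation as before. Then I expand $\frac{1}{2\eta_x}\|x^{k+1}-\xs\|^2$ by the polarization identity. This gives
\begin{equation*}
\frac{1}{2\eta_x}\|x^{k}-\xs\|^2 - \la \partial{\mygreen g}(x^{k+1}) + \mathsf{K}^*y^k + 2\nabla{\myblue f}(z^{k+1}) - \nabla{\myblue f}(z^k), x^{k+1}-\xs\ra - \frac{1}{2\eta_x}\|x^{k+1}-x^k\|^2 .
\end{equation*}

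Next, I subtract the optimality condition $\partial{\mygreen g}(\xs) + \mathsf{K}^*\ys + \nabla{\myblue f}(\zs) = 0$ from \eqref{eq:main_optimality_condition} inside the inner product. I also split $2\nabla{\myblue f}(z^{k+1}) - \nabla{\myblue f}(z^k) - \nabla{\myblue f}(\zs)$ as $\big(\nabla{\myblue f}(z^{k+1}) - \nabla{\myblue f}(\zs)\big) + \big(\nabla{\myblue f}(z^{k+1}) - \nabla{\myblue f}(z^k)\big)$. After that, the inner product becomes four terms:
\begin{itemize}
\item $\la \partial{\mygreen g}(x^{k+1}) - \partial{\mygreen g}(\xs), x^{k+1}-\xs\ra$;
\item $\la \mathsf{K}^*(y^k-\ys), x^{k+1}-\xs\ra$;
\item $\la \nabla{\myblue f}(z^{k+1})-\nabla{\myblue f}(\zs), x^{k+1}-\xs\ra$;
\item $\la \nabla{\myblue f}(z^{k+1})-\nabla{\myblue f}(z^k), x^{k+1}-\xs\ra$.
\end{itemize}
For the $\mathsf{K}$ term, I write $x^{k+1}-\xs = (x^k-\xs) + (x^{k+1}-x^k)$. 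This produces exactly the two $\mathsf{K}^*$ terms appearing in \eqref{eq:descent_lemma_apdtr_i_x_1}.

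To obtain \eqref{eq:descent_lemma_apdtr_i_x_1}, I bound the $\partial{\mygreen g}$ monotonicity term from below by $\mu_{\mygreen g}\|x^{k+1}-\xs\|^2$ using Lemma~\ref{lem:strong_convexity_bregman_non_diff}.

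For \eqref{eq:descent_lemma_apdtr_i_x_2}, I assume ${\mygreen g}$ is $L_{\mygreen g}$-smooth, so the subgradients become gradients. Then I use
\begin{equation*}
\la \nabla{\mygreen g}(x^{k+1})-\nabla{\mygreen g}(\xs), x^{k+1}-\xs\ra = D_{\mygreen g}(x^{k+1};\xs) + D_{\mygreen g}(\xs;x^{k+1}).
\end{equation*}
I bound one Bregman divergence below by $\frac{\mu_{\mygreen g}}{2}\|x^{k+1}-\xs\|^2$ via Lemma~\ref{lem:strong_convexity_bregman}, and the other below by $\frac{1}{2L_{\mygreen g}}\|\nabla{\mygreen g}(x^{k+1})-\nabla{\mygreen g}(\xs)\|^2$ via Lemma~\ref{lem:smoothness_bregman}. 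This is the same step as in the second part of Lemma~\ref{lem:descent_lemma_acv_1_x}.

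There is no real obstacle, since the argument is purely algebraic. The only point needing care is the bookkeeping in the gradient splitting, so that the result matches the stated grouping: the $\nabla{\myblue f}(z^{k+1})$ terms stay paired with $x^{k+1}-\xs$ and are not re-split, while the $y^k$ term is split.

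One further remark on Lemma~\ref{lem:strong_convexity_bregman} for the case $\mu_{\mygreen g}=0$: the sum-of-Bregman argument only needs $D_{\mygreen g}\ge\frac{\mu_{\mygreen g}}{2}\|\cdot\|^2$, which follows directly from the definition of strong convexity. So the lower bound holds for $\mu_{\mygreen g}\ge 0$ as well.
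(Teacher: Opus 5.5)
Your proposal is correct and follows the paper's proof essentially step for step. Both write the $x$-update in subgradient form, expand with the polarization identity, and subtract the optimality condition using the split $2\nabla f(z^{k+1})-\nabla f(z^k)-\nabla f(z^\star)=(\nabla f(z^{k+1})-\nabla f(z^\star))+(\nabla f(z^{k+1})-\nabla f(z^k))$. Both then decompose $x^{k+1}-x^\star=(x^k-x^\star)+(x^{k+1}-x^k)$ in the $\mathsf{K}$ term, and finish with monotonicity for the first bound and the two Bregman lower bounds for the second. Your remark that $D_g\ge\frac{\mu_g}{2}\|\cdot\|^2$ follows directly from the definition of strong convexity is a small but genuine tightening: the paper cites Lemma~\ref{lem:strong_convexity_bregman}, which is stated only for $\mu>0$, so your remark covers the case $\mu_g=0$.
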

\begin{proof}
    According to Line $4$ (left) of Algorithm~\ref{alg:APDTR}, we have 
\begin{eqnarray*}
    x^{k+1} &=& \prox_{\eta_x {\mygreen g}}\left(x^k - \eta_x \mathsf{K}^* y^k - \eta_x(2\nabla {\myblue f}(z^{k+1}) -  \nabla {\myblue f}(z^k)) \right)\\
    &=& x^k -\eta_x\left(\partial {\mygreen g}(x^{k+1}) +  \mathsf{K}^* y^k + (2\nabla {\myblue f}(z^{k+1}) -  \nabla {\myblue f}(z^k))\right),
\end{eqnarray*}
Using this, we derive 
\begin{eqnarray}
    \frac{1}{2\eta_x}\|x^{k+1}-\xs\|^2 &=& \frac{1}{2\eta_x}\|x^{k}-\xs\|^2 +\frac{1}{\eta_x}\la x^{k+1} -x^k, x^{k+1} - \xs \ra -\frac{1}{2\eta_x}\|x^{k+1} -x^k\|^2 \notag\\
    &=&\frac{1}{2\eta_x}\|x^{k}-\xs\|^2 -\frac{1}{2\eta_x}\|x^{k+1} -x^k\|^2\notag\\
    && -\la  \partial {\mygreen g}(x^{k+1}) +  \mathsf{K}^* y^k + (2\nabla {\myblue f}(z^{k+1}) -  \nabla {\myblue f}(z^k)), x^{k+1} - \xs \ra  \notag.  \label{eq:proof_descent_lemma_apdtr_i_x_1}
\end{eqnarray}
Plugging in the optimality condition $\partial{\mygreen g}(\xs) +\mathsf{K}^* \ys + \nabla {\myblue f} (\zs) = 0$ into \eqref{eq:proof_descent_lemma_apdtr_i_x_1}, we obtain 
\begin{eqnarray}
    \frac{1}{2\eta_x}\|x^{k+1}-\xs\|^2 &=& \frac{1}{2\eta_x}\|x^{k}-\xs\|^2 -\la \partial {\mygreen g} (x^{k+1}) - \partial {\mygreen g} (\xs), x^{k+1} - \xs \ra -\frac{1}{2\eta_x}\|x^{k+1} -x^k\|^2 \notag\\
    && -\la \mathsf{K}^* (y^{k} - \ys), x^{k+1} - \xs\ra  -\la \nabla {\myblue f}(z^{k+1}) -  \nabla {\myblue f}(\zs), x^{k+1} - \xs \ra \notag\\
    && -\la \nabla {\myblue f}(z^{k+1}) -  \nabla {\myblue f}(z^k), x^{k+1} - \xs \ra. \label{eq:proof_descent_lemma_apdtr_i_x_2}
\end{eqnarray}
Applying Lemma~\ref{lem:strong_convexity_bregman_non_diff}, we obtain 
\begin{eqnarray}
    \frac{1}{2\eta_x}\|x^{k+1}-\xs\|^2 &\leq& \frac{1}{2\eta_x}\|x^{k}-\xs\|^2 - \mu_{{\mygreen g}}\|x^{k+1} - \xs \|^2 -\frac{1}{2\eta_x}\|x^{k+1} -x^k\|^2 \notag\\
    && -\la \mathsf{K}^* (y^{k} - \ys), x^{k} - \xs\ra -\la  \nabla {\myblue f} (z^{k+1}) - \nabla {\myblue f} (\zs) , x^{k+1} - \xs\ra \notag\\
    && -\la\mathsf{K}^* (y^{k}-\ys), x^{k+1} - x^k\ra  -\la \nabla {\myblue f}(z^{k+1}) -  \nabla {\myblue f}(z^k), x^{k+1} - \xs \ra, \notag
\end{eqnarray}
which concludes the first part of the proof. 

If we assume that ${\mygreen g}$ is $L_{\mygreen g}$-smooth, we have that $\nabla {\mygreen g}(x) \in \partial {\mygreen g}(x)$. Then, applying  Lemma~\ref{lem:smoothness_bregman} and Lemma~\ref{lem:strong_convexity_bregman}, we derive from \eqref{eq:proof_descent_lemma_apdtr_i_x_2} that
\begin{eqnarray*}
    \frac{1}{2\eta_x}\|x^{k+1}-\xs\|^2 &\leq& \frac{1}{2\eta_x}\|x^{k}-\xs\|^2 - \frac{\mu_{{\mygreen g}}}{2}\|x^{k+1} - \xs \|^2 - \frac{1}{2L_{{\mygreen g}}}\|\nabla {\mygreen g}(x^{k+1}) - \nabla {\mygreen g}(\xs) \|^2 -\frac{1}{2\eta_x}\|x^{k+1} -x^k\|^2 \notag\\
    && -\la \mathsf{K}^* (y^{k} - \ys), x^{k} - \xs\ra -\la  \nabla {\myblue f} (z^{k+1}) - \nabla {\myblue f} (\zs) , x^{k+1} - \xs\ra \notag\\
    && -\la\mathsf{K}^* (y^{k}-\ys), x^{k+1} - x^k \ra  -\la \nabla {\myblue f}(z^{k+1}) -  \nabla {\myblue f}(z^k), x^{k+1} - \xs \ra, \notag
\end{eqnarray*}
which completes the proof.
\end{proof}

\begin{lemma}
    \label{lem:descent_lemma_apdtr_ii_x}
    Let $(\xs,\ys,\zs)$ be a solution of \eqref{eq:main_optimality_condition}, and assume ${\mygreen g}$ is $\mu_{\mygreen g}$-strongly convex for some $\mu_{\mygreen g}\ge 0$. Then the iterates generated by \algname{APDTR-II} (Algorithm~\ref{alg:APDTR}) satisfy
    \begin{eqnarray}
        \frac{1}{2\eta_x}\|x^{k+1}-\xs\|^2 &\leq& \frac{1}{2\eta_x}\|x^{k}-\xs\|^2 - \mu_{{\mygreen g}}\|x^{k+1} - \xs \|^2 -\frac{1}{2\eta_x}\|x^{k+1} -x^k\|^2 \notag\\
        && -\la \mathsf{K}^*(y^{k+1} - \ys), x^{k+1} - \xs\ra -\la  \nabla {\myblue f} (z^{k+1}) - \nabla {\myblue f} (\zs) , x^{k+1} - \xs\ra \notag\\
        && -\la \mathsf{K}^*(y^{k+1} - y^{k}), x^{k+1} - \xs \ra  -\la \nabla {\myblue f}(z^{k+1}) -  \nabla {\myblue f}(z^k), x^{k+1} - \xs \ra. \label{eq:descent_lemma_apdtr_ii_x_1}
    \end{eqnarray}
    Additionally, if ${\mygreen g}$ is $L_{\mygreen g}$-smooth, then the iterates generated by \algname{APDTR-II} (Algorithm~\ref{alg:APDTR}) satisfy
    \begin{eqnarray}
        \frac{1}{2\eta_x}\|x^{k+1}-\xs\|^2 &\leq& \frac{1}{2\eta_x}\|x^{k}-\xs\|^2 - \frac{\mu_{{\mygreen g}}}{2}\|x^{k+1} - \xs \|^2 - \frac{1}{2L_{{\mygreen g}}}\|\nabla {\mygreen g}(x^{k+1}) - \nabla {\mygreen g}(\xs) \|^2 -\frac{1}{2\eta_x}\|x^{k+1} -x^k\|^2 \notag\\
        && -\la \mathsf{K}^*(y^{k+1} - \ys), x^{k+1} - \xs\ra -\la  \nabla {\myblue f} (z^{k+1}) - \nabla {\myblue f} (\zs) , x^{k+1} - \xs\ra \notag\\
        && -\la \mathsf{K}^*(y^{k+1}-y^k), x^{k+1} - \xs\ra  -\la \nabla {\myblue f}(z^{k+1}) -  \nabla {\myblue f}(z^k), x^{k+1} - \xs \ra. \label{eq:descent_lemma_apdtr_ii_x_2}
    \end{eqnarray}
\end{lemma}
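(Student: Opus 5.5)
The plan is to follow the template of Lemma~\ref{lem:descent_lemma_apdtr_i_x} and Lemma~\ref{lem:descent_lemma_acv_2_x}. The only change is that the $x$-update now contains $\mathsf{K}^*(2y^{k+1}-y^k)$ in place of $\mathsf{K}^*y^k$. First, from Line~5 (right) of Algorithm~\ref{alg:APDTR}, the prox optimality condition gives
\begin{equation*}
x^{k+1} = x^k - \eta_x\left(\partial{\mygreen g}(x^{k+1}) + \mathsf{K}^*(2y^{k+1}-y^k) + 2\nabla{\myblue f}(z^{k+1}) - \nabla{\myblue f}(z^k)\right),
\end{equation*}
with the same abuse of notation for the subgradient as before. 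Then expand $\frac{1}{2\eta_x}\|x^{k+1}-\xs\|^2$ by the three-point identity:
\begin{equation*}
\frac{1}{2\eta_x}\|x^{k+1}-\xs\|^2 = \frac{1}{2\eta_x}\|x^{k}-\xs\|^2 + \frac{1}{\eta_x}\la x^{k+1}-x^k, x^{k+1}-\xs\ra - \frac{1}{2\eta_x}\|x^{k+1}-x^k\|^2.
\end{equation*}
Substituting the update turns the middle term into $-\la \partial{\mygreen g}(x^{k+1}) + \mathsf{K}^*(2y^{k+1}-y^k) + 2\nabla{\myblue f}(z^{k+1})-\nabla{\myblue f}(z^k), x^{k+1}-\xs\ra$.

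Next, subtract the optimality condition $\partial{\mygreen g}(\xs)+\mathsf{K}^*\ys+\nabla{\myblue f}(\zs)=0$ from \eqref{eq:main_optimality_condition}, using $\xs=\zs$. Split
\begin{equation*}
2y^{k+1}-y^k-\ys = (y^{k+1}-\ys) + (y^{k+1}-y^k),
\end{equation*}
\begin{equation*}
2\nabla{\myblue f}(z^{k+1})-\nabla{\myblue f}(z^k)-\nabla{\myblue f}(\zs) = \left(\nabla{\myblue f}(z^{k+1})-\nabla{\myblue f}(\zs)\right) + \left(\nabla{\myblue f}(z^{k+1})-\nabla{\myblue f}(z^k)\right).
\end{equation*}
This yields an exact identity containing $-\la \partial{\mygreen g}(x^{k+1})-\partial{\mygreen g}(\xs), x^{k+1}-\xs\ra$ together with the four inner products appearing on the right-hand side of \eqref{eq:descent_lemma_apdtr_ii_x_1}. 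No term needs to be shifted from $x^{k+1}$ to $x^k$, since every inner product is already taken against $x^{k+1}-\xs$.

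Finally, the two claims differ only in how the $\partial{\mygreen g}$ term is bounded. For \eqref{eq:descent_lemma_apdtr_ii_x_1}, apply Lemma~\ref{lem:strong_convexity_bregman_non_diff}, which bounds the term by $-\mu_{\mygreen g}\|x^{k+1}-\xs\|^2$. For \eqref{eq:descent_lemma_apdtr_ii_x_2}, write the inner product as $D_{\mygreen g}(x^{k+1};\xs)+D_{\mygreen g}(\xs;x^{k+1})$. Bound one divergence below by $\frac{\mu_{\mygreen g}}{2}\|x^{k+1}-\xs\|^2$ via Lemma~\ref{lem:strong_convexity_bregman}, and the other by $\frac{1}{2L_{\mygreen g}}\|\nabla{\mygreen g}(x^{k+1})-\nabla{\mygreen g}(\xs)\|^2$ via Lemma~\ref{lem:smoothness_bregman}. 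There is no genuine obstacle here. The only care needed is bookkeeping of the signs in the two splittings, so that the terms match the stated form exactly.
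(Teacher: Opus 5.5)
Your proposal is correct and follows the paper's proof essentially step for step. The steps are the same: the prox optimality condition, the expansion of $\frac{1}{2\eta_x}\|x^{k+1}-x^\star\|^2$, and subtraction of the optimality condition with exactly your two splittings. The two bounds then use strong monotonicity (Lemma~\ref{lem:strong_convexity_bregman_non_diff}) for the first, and $\langle \nabla g(x^{k+1})-\nabla g(x^\star), x^{k+1}-x^\star\rangle = D_g(x^{k+1};x^\star)+D_g(x^\star;x^{k+1})$ with Lemmas~\ref{lem:strong_convexity_bregman} and~\ref{lem:smoothness_bregman} for the second. One nitpick: Lemma~\ref{lem:strong_convexity_bregman} is stated for $\mu>0$, but the lower bound $D_g(x^{k+1};x^\star)\ge \frac{\mu_g}{2}\|x^{k+1}-x^\star\|^2$ follows directly from the definition of $\mu_g$-strong convexity, so the case $\mu_g=0$ is covered too.
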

\begin{proof}
    According to Line $5$ (right) of Algorithm~\ref{alg:APDTR}, we have 
\begin{eqnarray*}
    x^{k+1} &=& \prox_{\eta_x {\mygreen g}}\left(x^k - \eta_x \mathsf{K}^* (2y^{k+1}-y^k) - \eta_x(2\nabla {\myblue f}(z^{k+1}) -  \nabla {\myblue f}(z^k)) \right)\\
    &=& x^k -\eta_x\left(\partial {\mygreen g}(x^{k+1}) +  \mathsf{K}^* (2y^{k+1}-y^k) + (2\nabla {\myblue f}(z^{k+1}) -  \nabla {\myblue f}(z^k))\right).
\end{eqnarray*}
Using this, we derive 
\begin{eqnarray}
    \frac{1}{2\eta_x}\|x^{k+1}-\xs\|^2 &=& \frac{1}{2\eta_x}\|x^{k}-\xs\|^2 +\frac{1}{\eta_x}\la x^{k+1} -x^k, x^{k+1} - \xs \ra -\frac{1}{2\eta_x}\|x^{k+1} -x^k\|^2 \notag\\
    &=&\frac{1}{2\eta_x}\|x^{k}-\xs\|^2 -\frac{1}{2\eta_x}\|x^{k+1} -x^k\|^2\notag\\
    && -\la  \partial {\mygreen g}(x^{k+1}) +  \mathsf{K}^* (2y^{k+1}-y^k) + (2\nabla {\myblue f}(z^{k+1}) -  \nabla {\myblue f}(z^k)), x^{k+1} - \xs \ra  \notag.  \label{eq:proof_descent_lemma_apdtr_ii_x_1}
\end{eqnarray}

Plugging in the optimality condition $\partial{\mygreen g}(\xs) +\mathsf{K}^* \ys + \nabla {\myblue f} (\zs) = 0$ into \eqref{eq:proof_descent_lemma_apdtr_ii_x_1}, we obtain 
\begin{eqnarray}
    \frac{1}{2\eta_x}\|x^{k+1}-\xs\|^2 &=& \frac{1}{2\eta_x}\|x^{k}-\xs\|^2 -\la \partial {\mygreen g} (x^{k+1}) - \partial {\mygreen g} (\xs), x^{k+1} - \xs \ra -\frac{1}{2\eta_x}\|x^{k+1} -x^k\|^2 \notag\\
    && -\la \mathsf{K}^* (y^{k+1} - \ys), x^{k+1} - \xs\ra -\la  \nabla {\myblue f} (z^{k+1}) - \nabla {\myblue f} (\zs) , x^{k+1} - \xs\ra \notag\\
    && -\la\mathsf{K}^* (y^{k+1}-y^k), x^{k+1} - \xs \ra  -\la \nabla {\myblue f}(z^{k+1}) -  \nabla {\myblue f}(z^k), x^{k+1} - \xs \ra. \label{eq:proof_descent_lemma_apdtr_ii_x_2}
\end{eqnarray}
Applying Lemma~\ref{lem:strong_convexity_bregman_non_diff}, we obtain 
\begin{eqnarray}
    \frac{1}{2\eta_x}\|x^{k+1}-\xs\|^2 &\leq& \frac{1}{2\eta_x}\|x^{k}-\xs\|^2 - \mu_{{\mygreen g}}\|x^{k+1} - \xs \|^2 -\frac{1}{2\eta_x}\|x^{k+1} -x^k\|^2 \notag\\
    && -\la \mathsf{K}^* (y^{k+1} - \ys), x^{k+1} - \xs\ra -\la  \nabla {\myblue f} (z^{k+1}) - \nabla {\myblue f} (\zs) , x^{k+1} - \xs\ra \notag\\
    && -\la\mathsf{K}^* (y^{k+1}-y^k), x^{k+1} - \xs \ra  -\la \nabla {\myblue f}(z^{k+1}) -  \nabla {\myblue f}(z^k), x^{k+1} - \xs \ra, \notag
\end{eqnarray}
which concludes the first part of the proof. 

If we assume that ${\mygreen g}$ is $L_{\mygreen g}$-smooth, we have that $\nabla {\mygreen g}(x) \in \partial {\mygreen g}(x)$, moreover, $\partial {\mygreen g}(x) = \{ \nabla {\mygreen g}(x) \}$. Then, applying  Lemma~\ref{lem:smoothness_bregman} and Lemma~\ref{lem:strong_convexity_bregman}, we derive from \eqref{eq:proof_descent_lemma_apdtr_ii_x_2}
\begin{eqnarray*}
    \frac{1}{2\eta_x}\|x^{k+1}-\xs\|^2 &\leq& \frac{1}{2\eta_x}\|x^{k}-\xs\|^2 - \frac{\mu_{{\mygreen g}}}{2}\|x^{k+1} - \xs \|^2 - \frac{1}{2L_{{\mygreen g}}}\|\nabla {\mygreen g}(x^{k+1}) - \nabla {\mygreen g}(\xs) \|^2 -\frac{1}{2\eta_x}\|x^{k+1} -x^k\|^2 \notag\\
    && -\la \mathsf{K}^* (y^{k+1} - \ys), x^{k+1} - \xs\ra -\la  \nabla {\myblue f} (z^{k+1}) - \nabla {\myblue f} (\zs) , x^{k+1} - \xs\ra \notag\\
    && -\la\mathsf{K}^* (y^{k+1}-y^k), x^{k+1} - \xs \ra  -\la \nabla {\myblue f}(z^{k+1}) -  \nabla {\myblue f}(z^k), x^{k+1} - \xs \ra, \notag
\end{eqnarray*}
which completes the last part of the proof. 
\end{proof}

Descent lemmas for the dual updates, i.e.\ on $y,z$, are the same as for \algname{APGE} and \algname{ACV}.

\subsection{Proof of Theorem~\ref{th:convergence_ACV_and_APDTR}}
\label{sec:proof_of_th_convergence_acv_and_apdtr}

\paragraph{Lyapunov analysis for \algname{ACV-I}.}
Recall the definition of Lyapunov function for \algname{ACV-I} 
\begin{eqnarray*}
    \Psi_{k+1} &=& \frac{1}{2\eta_x}\|x^{k+1}-\xs\|^2 + \frac{1}{2\eta_y}\|y^{k+1}-\ys\|^2 + \frac{1}{\eta_z} D_{{\myblue f}}(z^{k+1};\zs)\\
    && -\la y^{k+1}-\ys, \mathsf{K}(x^{k+1}-\xs)\ra  -  \la \nabla {\myblue f}(z^{k+1}) - \nabla{\myblue f}(\zs), x^{k+1} - \xs\ra.
\end{eqnarray*}
Applying Lemma~\ref{lem:descent_lemma_acv_1_x}, precisely, \eqref{eq:descent_lemma_acv_1_x_1}, Lemma~\ref{lem:descent_lemma_acv_1_y} with $a_y =1$ and Lemma~\ref{lem:descent_lemma_apgd_z} with $a_z =1$, we obtain
\begin{eqnarray}
    \Psi_{k+1}&\leq& \frac{1}{2\eta_x}\|x^{k}-\xs\|^2 -\la \mathsf{K}^*(y^k - \ys), x^{k} - \xs\ra -\la \nabla {\myblue f} (z^k)  - \nabla {\myblue f} (\zs) , x^{k} - \xs\ra  -\mu_{\mygreen g}\|x^{k+1} -\xs\|^2 \notag\\
    &&   -\la \mathsf{K}^*(y^k - \ys), x^{k+1} - x^{k}\ra -\la \nabla {\myblue f} (z^k)  - \nabla {\myblue f} (\zs) , x^{k+1} - x^k\ra -\frac{1}{2\eta_x}\|x^{k+1} -x^k\|^2\notag\\ 
    && +  \frac{1}{2\eta_y}\|y^{k}-\ys\|^2  - \mu_{\hstar}\|y^{k+1}-\ys\|^2 +\la y^{k}-\ys, \mathsf{K}(x^{k+1}-x^k)\ra + \|\mathsf{K}\|^2\eta_x\eta_y \frac{1}{2\eta_x}\|x^{k+1}-x^k\|^2\notag\\
    && + \frac{1}{\eta_z} D_{{\myblue f}}(z^{k};\zs) - D_{\myblue f}(z^{k+1};\zs)  + \la \nabla {\myblue f}(z^{k}) - \nabla{\myblue f}(\zs), x^{k+1}-x^k \ra + L_{{\myblue f}}\eta_x\eta_z\frac{1}{2\eta_x}\|x^{k+1}-x^k \|^2 \notag\\
    &=& \Psi_k  -\mu_{\mygreen g}\|x^{k+1} -\xs\|^2 - \mu_{\hstar}\|y^{k+1}-\ys\|^2 - D_{\myblue f}(z^{k+1};\zs) \notag\\
    && - \left(1 - \|\mathsf{K}\|^2\eta_x\eta_y  -  L_{{\myblue f}}\eta_x\eta_z\right)\frac{1}{2\eta_x}\|x^{k+1} - x^k\|^2\notag\\
    &\leq& \Psi_k  -\mu_{\mygreen g}\|x^{k+1} -\xs\|^2 - \mu_{\hstar}\|y^{k+1}-\ys\|^2 - D_{\myblue f}(z^{k+1};\zs), \label{eq:proof_acv_i_conv_14}
\end{eqnarray}
where in the last inequality we used $1 - \|\mathsf{K}\|^2\eta_x\eta_y  -  L_{{\myblue f}}\eta_x\eta_z \geq 0$.

\paragraph{Lyapunov analysis for \algname{ACV-II}.}
Recall the definition of the Lyapunov function for \algname{ACV-II}:
\begin{eqnarray*}
    \Psi_{k+1} &=& \frac{1}{2\eta_x}\|x^{k+1}-\xs\|^2 + \frac{1}{2\eta_y}\|y^{k+1}-\ys\|^2 + \frac{1}{\eta_z} D_{{\myblue f}}(z^{k+1};\zs)\\
    && +\la y^{k+1}-\ys, \mathsf{K}(x^{k+1}-\xs)\ra  -  \la \nabla {\myblue f}(z^{k+1}) - \nabla{\myblue f}(\zs), x^{k+1} - \xs\ra.
\end{eqnarray*}
Applying Lemma~\ref{lem:descent_lemma_acv_2_x}, precisely, \eqref{eq:descent_lemma_acv_2_x_1}, Lemma~\ref{lem:descent_lemma_acv_2_y} with $a_y =1$ and Lemma~\ref{lem:descent_lemma_apgd_z} with $a_z =1$, we obtain
\begin{eqnarray}
    \Psi_{k+1}&\leq& \frac{1}{2\eta_x}\|x^{k}-\xs\|^2 +\la \mathsf{K}^*(y^k - \ys), x^{k} - \xs\ra -\la \nabla {\myblue f} (z^k)  - \nabla {\myblue f} (\zs) , x^{k} - \xs\ra  -\mu_{\mygreen g}\|x^{k+1} -\xs\|^2 \notag\\
    &&   -\la \mathsf{K}^*(y^{k+1} - y^k), x^{k+1} - \xs\ra -\la \nabla {\myblue f} (z^k)  - \nabla {\myblue f} (\zs) , x^{k+1} - x^k\ra -\frac{1}{2\eta_x}\|x^{k+1} -x^k\|^2\notag\\ 
    && +  \frac{1}{2\eta_y}\|y^{k}-\ys\|^2  - \mu_{\hstar}\|y^{k+1}-\ys\|^2 + \la y^{k+1}-y^k, \mathsf{K}(x^{k+1}-\xs)\ra + \|\mathsf{K}\|^2\eta_x\eta_y \frac{1}{2\eta_x}\|x^{k+1}-x^k\|^2\notag\\
    && + \frac{1}{\eta_z} D_{{\myblue f}}(z^{k};\zs) - D_{\myblue f}(z^{k+1};\zs)  + \la \nabla {\myblue f}(z^{k}) - \nabla{\myblue f}(\zs), x^{k+1}-x^k \ra + L_{{\myblue f}}\eta_x\eta_z\frac{1}{2\eta_x}\|x^{k+1}-x^k \|^2 \notag\\
    &=& \Psi_k  -\mu_{\mygreen g}\|x^{k+1} -\xs\|^2 - \mu_{\hstar}\|y^{k+1}-\ys\|^2 - D_{\myblue f}(z^{k+1};\zs) \notag\\
    && - \left(1 - \|\mathsf{K}\|^2\eta_x\eta_y  -  L_{{\myblue f}}\eta_x\eta_z\right)\frac{1}{2\eta_x}\|x^{k+1} - x^k\|^2\notag\\
    &\leq& \Psi_k  -\mu_{\mygreen g}\|x^{k+1} -\xs\|^2 - \mu_{\hstar}\|y^{k+1}-\ys\|^2 - D_{\myblue f}(z^{k+1};\zs), \label{eq:proof_acv_ii_conv_14}
\end{eqnarray}
where in the last inequality we used $1 - \|\mathsf{K}\|^2\eta_x\eta_y  -  L_{{\myblue f}}\eta_x\eta_z \geq 0$.

\paragraph{Lyapunov analysis for \algname{APDTR-I}.}
Recall the definition of the Lyapunov function for \algname{APDTR-I}:
\begin{eqnarray*}
    \Psi_{k+1} &=& \frac{1}{2\eta_x}\|x^{k+1}-\xs\|^2 + \frac{1}{2\eta_y}\|y^{k+1}-\ys\|^2 + \frac{1}{\eta_z} D_{{\myblue f}}(z^{k+1};\zs)\\
    && -\la y^{k+1}-\ys, \mathsf{K}(x^{k+1}-\xs)\ra  + \la \nabla {\myblue f}(z^{k+1}) - \nabla{\myblue f}(\zs), x^{k+1} - \xs\ra.
\end{eqnarray*}
Applying Lemma~\ref{lem:descent_lemma_apdtr_i_x}, precisely, \eqref{eq:descent_lemma_apdtr_i_x_1}, Lemma~\ref{lem:descent_lemma_acv_1_y} with $a_y =1$ and Lemma~\ref{lem:descent_lemma_apge_z} with $a_z =1$, we obtain
\begin{eqnarray}
    \Psi_{k+1}&\leq& \frac{1}{2\eta_x}\|x^{k}-\xs\|^2 -\la \mathsf{K}^*(y^k - \ys), x^{k} - \xs\ra +\la \nabla {\myblue f} (z^k)  - \nabla {\myblue f} (\zs) , x^{k} - \xs\ra  -\mu_{\mygreen g}\|x^{k+1} -\xs\|^2 \notag\\
    &&   -\la \mathsf{K}^*(y^k - \ys), x^{k+1} - x^{k}\ra -\la \nabla {\myblue f} (z^{k+1})  - \nabla {\myblue f} (z^k) , x^{k+1} - \xs\ra -\frac{1}{2\eta_x}\|x^{k+1} -x^k\|^2\notag\\ 
    && +  \frac{1}{2\eta_y}\|y^{k}-\ys\|^2  - \mu_{\hstar}\|y^{k+1}-\ys\|^2 +\la y^{k}-\ys, \mathsf{K}(x^{k+1}-x^k)\ra + \|\mathsf{K}\|^2\eta_x\eta_y \frac{1}{2\eta_x}\|x^{k+1}-x^k\|^2\notag\\
    && + \frac{1}{\eta_z} D_{{\myblue f}}(z^{k};\zs) - D_{\myblue f}(z^{k+1};\zs) + \la \nabla {\myblue f}(z^{k+1}) - \nabla{\myblue f}(z^k), x^{k+1}-\xs \ra + L_{{\myblue f}}\eta_x\eta_z\frac{1}{2\eta_x}\|x^{k+1}-x^k \|^2 \notag\\
    &=& \Psi_k  -\mu_{\mygreen g}\|x^{k+1} -\xs\|^2 - \mu_{\hstar}\|y^{k+1}-\ys\|^2 - D_{\myblue f}(z^{k+1};\zs) \notag\\
    && - \left(1 - \|\mathsf{K}\|^2\eta_x\eta_y  -  L_{{\myblue f}}\eta_x\eta_z\right)\frac{1}{2\eta_x}\|x^{k+1} - x^k\|^2\notag\\
    &\leq& \Psi_k  -\mu_{\mygreen g}\|x^{k+1} -\xs\|^2 - \mu_{\hstar}\|y^{k+1}-\ys\|^2 - D_{\myblue f}(z^{k+1};\zs), \label{eq:proof_apdtr_i_conv_14}
\end{eqnarray}
where in the last inequality we used $1 - \|\mathsf{K}\|^2\eta_x\eta_y  -  L_{{\myblue f}}\eta_x\eta_z \geq 0$.

\paragraph{Lyapunov analysis for \algname{APDTR-II}.}
Recall the definition of the Lyapunov function for \algname{APDTR-II}:
\begin{eqnarray*}
    \Psi_{k+1} &=& \frac{1}{2\eta_x}\|x^{k+1}-\xs\|^2 + \frac{1}{2\eta_y}\|y^{k+1}-\ys\|^2 + \frac{1}{\eta_z} D_{{\myblue f}}(z^{k+1};\zs)\\
    && +\la y^{k+1}-\ys, \mathsf{K}(x^{k+1}-\xs)\ra  + \la \nabla {\myblue f}(z^{k+1}) - \nabla{\myblue f}(\zs), x^{k+1} - \xs\ra.
\end{eqnarray*}
Applying Lemma~\ref{lem:descent_lemma_apdtr_ii_x}, precisely, \eqref{eq:descent_lemma_apdtr_ii_x_1}, Lemma~\ref{lem:descent_lemma_acv_2_y} with $a_y =1$ and Lemma~\ref{lem:descent_lemma_apge_z} with $a_z =1$, we obtain
\begin{eqnarray}
    \Psi_{k+1}&\leq& \frac{1}{2\eta_x}\|x^{k}-\xs\|^2 +\la y^k - \ys, \mathsf{K}(x^{k} - \xs)\ra +\la \nabla {\myblue f} (z^k)  - \nabla {\myblue f} (\zs) , x^{k} - \xs\ra  -\mu_{\mygreen g}\|x^{k+1} -\xs\|^2 \notag\\
    &&   - \la \mathsf{K}^*(y^{k+1} - y^k),x^{k+1} - \xs\ra  - \la \nabla {\myblue f} (z^{k+1})  - \nabla {\myblue f} (z^k) , x^{k+1} - \xs\ra -\frac{1}{2\eta_x}\|x^{k+1} -x^k\|^2\notag\\ 
    && +  \frac{1}{2\eta_y}\|y^{k}-\ys\|^2  - \mu_{\hstar}\|y^{k+1}-\ys\|^2 + \la y^{k+1}-y^k, \mathsf{K}(x^{k+1}-\xs)\ra + \|\mathsf{K}\|^2\eta_x\eta_y \frac{1}{2\eta_x}\|x^{k+1}-x^k\|^2\notag\\
    && + \frac{1}{\eta_z} D_{{\myblue f}}(z^{k};\zs) - D_{\myblue f}(z^{k+1};\zs)  + \la \nabla {\myblue f}(z^{k+1}) - \nabla{\myblue f}(z^k), x^{k+1}-\xs \ra + L_{{\myblue f}}\eta_x\eta_z\frac{1}{2\eta_x}\|x^{k+1}-x^k \|^2 \notag\\
    &=& \Psi_k  -\mu_{\mygreen g}\|x^{k+1} -\xs\|^2 - \mu_{\hstar}\|y^{k+1}-\ys\|^2 - D_{\myblue f}(z^{k+1};\zs) \notag\\
    && - \left(1 - \|\mathsf{K}\|^2\eta_x\eta_y  -  L_{{\myblue f}}\eta_x\eta_z\right)\frac{1}{2\eta_x}\|x^{k+1} - x^k\|^2\notag\\
    &\leq& \Psi_k  -\mu_{\mygreen g}\|x^{k+1} -\xs\|^2 - \mu_{\hstar}\|y^{k+1}-\ys\|^2 - D_{\myblue f}(z^{k+1};\zs), \label{eq:proof_apdtr_ii_conv_14}
\end{eqnarray}
where in the last inequality we used $1 - \|\mathsf{K}\|^2\eta_x\eta_y  -  L_{{\myblue f}}\eta_x\eta_z \geq 0$.

Applying Lemma~\ref{lem:lyaponov_func_acv_apdtr} into \eqref{eq:proof_acv_i_conv_14} and \eqref{eq:proof_acv_ii_conv_14}, \eqref{eq:proof_apdtr_i_conv_14} and \eqref{eq:proof_apdtr_ii_conv_14}, we obtain
\begin{eqnarray*}
    \Psi_{k+1} \leq \Psi_k -\min\left\{\mu_{\mygreen g}\eta_x, \mu_{\hstar}\eta_y, \frac{\eta_z}{2}\right\}\Psi_{k+1}
    &\Rightarrow& 
    \min\left\{1+ \mu_{\mygreen g}\eta_x, 1+ \mu_{\hstar}\eta_y, \frac{2 +\eta_z}{2}\right\} \Psi_{k+1} \leq \Psi_k.
\end{eqnarray*}
Thus, denoting $\theta = \max\left\{\frac{1}{1+\mu_{{\mygreen g}}\eta_x}, \frac{1}{1+\mu_{{\hstar}}\eta_y}, \frac{2}{2+\eta_z}\right\}$, we have 
\begin{eqnarray}
    \Psi_{k+1} &\leq& \theta \Psi_{k} ~\leq~ \theta^{k+1} \Psi_0. \label{eq:ascdancjnadlcal}
\end{eqnarray}

\subsection{Proof of Corollary~\ref{cor:acv_and_apdtr_complexity}}
\label{sec:proof_of_cor_acv_and_apdtr_complexity}
We aim to derive iteration complexity of Algorithm~\ref{alg:ACV} and Algorithm~\ref{alg:APDTR} as Corollary~\ref{cor:acv_and_apdtr_complexity}.
In view of \eqref{eq:ascdancjnadlcal}, we have 
\begin{equation}
    k \geq \cO\left(\frac{1}{1-\theta}\log \frac{1}{\varepsilon}\right) \quad\Rightarrow\quad \Psi_k \leq \varepsilon \Psi_0.
\end{equation}
By the definition of $\theta = \max\left\{\frac{1}{1+\mu_{{\mygreen g}}\eta_x}, \frac{1}{1+\mu_{{\hstar}}\eta_y}, \frac{2}{2+\eta_z}\right\}$, we have 
\begin{eqnarray*}
    \cO\left(\frac{1}{1-\theta}\log\frac{1}{\varepsilon}\right) &=& \cO\left(\max\left\{\frac{1+\mu_{{\mygreen g}}\eta_x}{\mu_{{\mygreen g}}\eta_x}, \frac{1+\mu_{{\hstar}}\eta_y}{\mu_{{\hstar}}\eta_y}, \frac{2+\eta_z}{\eta_z}\right\}\log\frac{1}{\varepsilon}\right)\\
    &=& \cO\left(\max\left\{1+\frac{1}{\mu_{{\mygreen g}}\eta_x}, 1+\frac{1}{\mu_{{\hstar}}\eta_y}, 1+\frac{2}{\eta_z}\right\}\log\frac{1}{\varepsilon}\right).
\end{eqnarray*}
By the selection of parameters $\eta_x, \eta_y, \eta_z$ as follows
\begin{equation*}
        \eta_x = \min\left\{\frac{1}{\sqrt{L_{\myblue f}\mu_{\mygreen g}}}, \sqrt{\frac{\mu_{\hstar}}{\|\mathsf{K}\|^2\mu_{\mygreen g}}}\right\},
        \quad \eta_y = \frac{1}{2}\sqrt{\frac{\mu_{\mygreen g}}{\|\mathsf{K}\|^2\mu_{\hstar}}},
        \quad \eta_z  = \frac{1}{2}\sqrt{\frac{\mu_{\mygreen g}}{L_{\myblue f}}},
    \end{equation*}
we obtain 
\begin{eqnarray*}
    \cO\left(\frac{1}{1-\theta}\log\frac{1}{\varepsilon}\right) &=& \cO\left(\max\left\{1+ \sqrt{\frac{L_{\myblue f}}{\mu_{{\mygreen g}}}}, 1+ \sqrt{\frac{\|\mathsf{K}\|^2}{\mu_{\mygreen g}\mu_{{\hstar}}}}\right\}\log\frac{1}{\varepsilon}\right).
\end{eqnarray*}
This proves the statement of Corollary~\ref{cor:acv_and_apdtr_complexity}.

\subsection{Proof of Theorem~\ref{th:convergence_ACV_and_APDTR_non_smooth}}
\label{sec:proof_convergence_acv_apdtr_non_smooth}

\paragraph{Lyapunov analysis for \algname{ACV-I}.} Recall the Lyapunov function for \algname{ACV-I}
\begin{eqnarray*}
    \Psi_{k+1} &=& \frac{1}{2\eta_x}\|x^{k+1}-\xs\|^2 + \frac{1}{2\eta_y}\|y^{k+1}-\ys\|^2 + \frac{1}{\eta_z} D_{{\myblue f}}(z^{k+1};\zs)\\
    && -\la y^{k+1}-\ys, \mathsf{K}(x^{k+1}-\xs)\ra  - \la \nabla {\myblue f}(z^{k+1}) - \nabla{\myblue f}(\zs), x^{k+1} - \xs\ra.
\end{eqnarray*}

Combining Lemma~\ref{lem:descent_lemma_acv_1_x} (inequality \eqref{eq:descent_lemma_acv_1_x_2}), Lemma~\ref{lem:descent_lemma_acv_1_y} with $\mu_{\hstar}=0$ and $a_y = \nicefrac{1}{2}$ and Lemma~\ref{lem:descent_lemma_apgd_z} with $a_z = \nicefrac{1}{2}$ together, we obtain 
\begin{eqnarray}
    \Psi_{k+1}&\leq& \frac{1}{2\eta_x}\|x^{k}-\xs\|^2 - \frac{\mu_{{\mygreen g}}}{2}\|x^{k+1} - \xs \|^2 - \frac{1}{2L_{\mygreen g}}\|\nabla {\mygreen g}(x^{k+1}) - \nabla {\mygreen g}(\xs)\|^2  -\frac{1}{2\eta_x}\|x^{k+1} -x^k\|^2 \notag\\
    &&  -\la \mathsf{K}^*(y^k - \ys), x^{k} - \xs\ra  -\la \mathsf{K}^*(y^k - \ys), x^{k+1} - x^{k}\ra \notag\\ 
    &&  -\la \nabla {\myblue f} (z^k)  - \nabla {\myblue f} (\zs) , x^{k} - \xs\ra -\la \nabla {\myblue f} (z^k)  - \nabla {\myblue f} (\zs) , x^{k+1} - x^k\ra \notag\\
    &&+ \frac{1}{2\eta_y}\|y^{k}-\ys\|^2   -\frac{1}{4\eta_y} \|y^{k+1}- y^k\|^2  + \|\mathsf{K}\|^2\eta_x\eta_y \frac{1}{\eta_x}\|x^{k+1}-x^k\|^2 \notag\\
    && + \frac{1}{\eta_z} D_{{\myblue f}}(z^{k}; \zs) - D_{{\myblue f}}(z^{k+1}; \zs) - D_{{\myblue f}}(\zs;z^{k+1})  -\frac{1}{2\eta_z}D_{{\myblue f}}(z^k; z^{k+1}) + L_{{\myblue f}}\eta_x\eta_z\frac{1}{\eta_x}\|x^{k+1}-x^k \|^2\notag\\
    && +  \la y^{k}-\ys, \mathsf{K}(x^{k+1}-x^k)\ra  + \la \nabla {\myblue f}(z^{k}) - \nabla{\myblue f}(\zs), x^{k+1}-x^k \ra \notag \\
    &=& \Psi_k - \frac{\mu_{{\mygreen g}}}{2}\|x^{k+1} - \xs \|^2   - D_{\myblue f}(z^{k+1};\zs) - \frac{1}{2L_{\mygreen g}}\|\nabla {\mygreen g}(x^{k+1}) - \nabla {\mygreen g}(\xs)\|^2  \notag\\
    && -\left(1-\delta  - 2\|\mathsf{K}\|^2\eta_x\eta_y - 2L_{f}\eta_x\eta_z\right) \frac{1}{2\eta_x}\|x^{k+1}-x^k\|^2 \notag\\
    && - \frac{\delta}{2\eta_x}\|x^{k+1}-x^k\|^2 -\frac{1}{4\eta_y} \|y^{k+1}- y^k\|^2 - \frac{1}{2\eta_z}D_f(z^{k+1}; z^k) - D_{\myblue f}(\zs; z^{k+1}). \label{eq:proof_acv_i_conv_2_14}
\end{eqnarray}

We now bound the term $-\frac{\delta}{2\eta_x}\|x^{k+1}-x^k\|^2$ from (\ref{eq:proof_acv_i_conv_2_14}). By the update rule for $x^{k+1}$ in \algname{ACV-I} (Line $3$ (left) from Algorithm~\ref{alg:ACV}), we have 
\begin{eqnarray*}
    -\frac{\delta}{2\eta_x}\|x^{k+1}-x^k\|^2 &=& - \frac{\delta\eta_x}{2}\|\nabla {\mygreen g}(x^{k+1}) + \mathsf{K}^*y^{k} + \nabla {\myblue f}(z^{k})\|^2  \\
    &\leq& - \frac{\delta\eta_x}{10}\|\mathsf{K}^*(y^{k+1}-\ys)\|^2 +  \frac{\delta\eta_x}{2}\|\mathsf{K}^*(y^{k+1}-y^k)\|^2 + \frac{\delta\eta_x}{2}\|\nabla{\mygreen g}(x^{k+1}) - \nabla{\mygreen g}(\xs)\|^2 \\
    &&+ \frac{\delta\eta_x}{2}\|\nabla{\myblue f}(z^{k+1}) - \nabla{\myblue f}(\zs)\|^2 + \frac{\delta\eta_x}{2}\|\nabla{\myblue f}(z^{k+1}) - \nabla{\myblue f}(z^k)\|^2.
\end{eqnarray*}

 Using $L_{\myblue f}$-smoothness of ${\myblue f}$ and boundedness of ${\mathsf K}$, we have 
\begin{eqnarray}
    -\frac{\delta}{2\eta_x}\|x^{k+1}-x^k\|^2 &\leq& - \frac{\delta\eta_x\lambda_{\min}(\mathsf{K}\mathsf{K}^*) }{10}\|y^{k+1}-\ys\|^2 +  \frac{\delta\eta_x\|\mathsf{K}\|^2}{2}\|y^{k+1}-y^k\|^2  \notag\\
    &&+ \frac{\delta\eta_x}{2}\|\nabla{\mygreen g}(x^{k+1}) - \nabla{\mygreen g}(\xs)\|^2 + \delta L_{\myblue f}\eta_xD_{\myblue f}(\zs;z^{k+1}) + \delta L_{\myblue f}\eta_xD_{\myblue f}(z^{k};z^{k+1}). \label{eq:proof_acv_i_conv_2_15}
\end{eqnarray}
Plugging \eqref{eq:proof_acv_i_conv_2_15} into \eqref{eq:proof_acv_i_conv_2_14}, we obtain
\begin{eqnarray*}
    \Psi_{k+1} &\leq& \Psi_k  - \frac{\mu_{{\mygreen g}}}{2}\|x^{k+1} - \xs \|^2 - \frac{\delta\eta_x\lambda_{\min}(\mathsf{K}\mathsf{K}^*) }{10}\|y^{k+1}-\ys\|^2  - D_{\myblue f}(z^{k+1};\zs)\\
    && -\left(1-\delta  - 2\|\mathsf{K}\|^2\eta_x\eta_y - 2L_{f}\eta_x\eta_z\right) \frac{1}{2\eta_x}\|x^{k+1}-x^k\|^2 \notag\\
    && - \left(\frac{1}{2L_{\mygreen g}} - \frac{\delta\eta_x}{2}\right)\|\nabla {\mygreen g}(x^{k+1}) - \nabla {\mygreen g}(\xs)\|^2  - \left( 1 - 2\delta\|\mathsf{K}\|^2\eta_x\eta_y\right)\frac{1}{4\eta_y}\|y^{k+1}-y^k\|^2 \notag\\
    && - \left(1 - 2\delta L_{\myblue f}\eta_x\eta_z\right)\frac{1}{2\eta_z}D_f(z^{k};z^{k+1}) - \left(1 - \delta L_{\myblue f}\eta_x\right)D_{\myblue f}(\zs; z^{k+1}).
\end{eqnarray*}
Assume that $\delta$ and parameters $\eta_x, \eta_y, \eta_z$ satisfy the conditions 
\begin{eqnarray*}
    \delta \leq \frac{1}{2}, \quad \delta \leq \frac{1}{L_{\mygreen g}\eta_x}, \quad \delta \leq \frac{1}{L_{\myblue f}\eta_x},\quad 1-8\|\mathsf{K}\|^2\eta_x\eta_y \geq 0, \quad 1-8L_{\myblue f}\eta_x\eta_z \geq 0.
\end{eqnarray*}
Then, we derive the inequality
\begin{eqnarray}
    \Psi_{k+1} \leq \Psi_k - \min\left\{\frac{\mu_{\mygreen g}\eta_x}{2}, \frac{1}{20} \delta\lambda_{\min}(\mathsf{K}\mathsf{K}^*) \eta_x\eta_y, \frac{\eta_z}{2}\right\}\Psi_{k+1} &\Rightarrow& \Psi_{k+1} \leq \theta \Psi_k \leq \theta^{k+1}\Psi_0,\notag
\end{eqnarray}
where the coefficient $\theta$ is defined as 
\begin{equation}
    \theta = \max\left\{\frac{2}{2+\mu_{\mygreen g}\eta_x}, \frac{20}{20+\delta\lambda_{\min}(\mathsf{K}\mathsf{K}^*) \eta_x\eta_y}, \frac{2}{2+\eta_z}\right\}. \notag
\end{equation}
 Selecting $\delta = \min\left\{\frac{1}{2},\frac{1}{\eta_x(L_{\mygreen g}+L_{\myblue f})}\right\}$, we have 
\begin{equation}
    \label{eq:proof_acv_i_conv_2_16}
    \Psi_{k+1} \leq \theta^{k+1}\Psi_0, 
\end{equation}
where $\theta$ is defined as 
\begin{equation*}
    \theta = \max\left\{\frac{2}{2+\mu_{{\mygreen g}}\eta_x}, \frac{40}{40+\lambda_{\min}(\mathsf{K}\mathsf{K}^*)\eta_x\eta_y}, \frac{20}{20+\frac{\lambda_{\min}(\mathsf{K}\mathsf{K}^*)}{(L_{\mygreen g}+ L_{\myblue f})}\eta_y}, \frac{2}{2+\eta_z}\right\}.
\end{equation*}
This finishes the proof. 

\paragraph{Lyapunov analysis for \algname{ACV-II}.} We define the Lyapunov function 
\begin{eqnarray*}
    \Psi_{k+1} &=& \frac{1}{2\eta_x}\|x^{k+1}-\xs\|^2 + \frac{1}{2\eta_y}\|y^{k+1}-\ys\|^2 + \frac{1}{\eta_z} D_{{\myblue f}}(z^{k+1};\zs)\\
    && +\la y^{k+1}-\ys, \mathsf{K}(x^{k+1}-\xs)\ra  - \la \nabla {\myblue f}(z^{k+1}) - \nabla{\myblue f}(\zs), x^{k+1} - \xs\ra.
\end{eqnarray*}
The rest of the proof is the same up to the cross terms as in the proof for \algname{ACV-I}. The difference is in  the application of Lemma~\ref{lem:descent_lemma_acv_2_x} (inequality \eqref{eq:descent_lemma_acv_2_x_2}), Lemma~\ref{lem:descent_lemma_acv_2_y} with $\mu_{\hstar}=0$ and $a_y = \nicefrac{1}{2}$ and Lemma~\ref{lem:descent_lemma_apgd_z} with $a_z = \nicefrac{1}{2}$:
\begin{eqnarray}
    \Psi_{k+1} &\leq& 
    \frac{1}{2\eta_x}\|x^{k+1} - \xs\|^2  - \frac{\mu_{\mygreen g}}{2}\|x^{k+1} - \xs\|^2 - \frac{1}{2L_{\mygreen g}}\| \nabla {\mygreen g}(x^{k+1}) - \nabla {\mygreen g}(\xs)\|^2 - \frac{1}{2\eta_x}\|x^{k+1}-x^k\|^2 \notag\\
    && - \la \mathsf{K}^*(y^{k+1} - y^k), x^{k+1} - \xs \ra - \la \nabla {\myblue f}(z^{k}) - \nabla {\myblue f}(\zs), x^{k} - \xs \ra  - \la \nabla {\myblue f}(z^{k}) - \nabla {\myblue f}(\zs), x^{k+1} - x^k \ra\notag\\
    && + \frac{1}{2\eta_y}\|y^k -\ys\|^2 - \frac{1}{4\eta_y}\|y^{k+1}-y^k\|^2 + \|\mathsf{K}\|^2 \eta_x\eta_y \frac{1}{\eta_y}\|x^{k+1}-x^k\|^2 \notag\\
    && \frac{1}{\eta_z}D_{\myblue f}(z^{k};\zs) - D_{\myblue f}(z^{k+1};\zs) - D_{\myblue f}(\zs;z^{k+1}) - \frac{1}{2\eta_z}D_{\myblue f}(z^{k};z^{k+1}) + L_{\myblue f}\eta_x\eta_z\frac{1}{\eta_x}\|x^{k+1}-x^k\|^2\notag\\
    && + \la y^k - \ys, \mathsf{K}(x^k -\xs)\ra + \la y^{k+1}- y^k, \mathsf{K}(x^{k+1}-\xs)\ra  + \la \nabla {\myblue f}(z^{k}) - \nabla {\myblue f}(\zs), x^{k+1} - x^k \ra  \notag\\
    &=& \Psi_k - \frac{\mu_{{\mygreen g}}}{2}\|x^{k+1} - \xs \|^2   - D_{\myblue f}(z^{k+1};\zs) - \frac{1}{2L_{\mygreen g}}\|\nabla {\mygreen g}(x^{k+1}) - \nabla {\mygreen g}(\xs)\|^2  \notag\\
    && -\left(1-\delta  - 2\|\mathsf{K}\|^2\eta_x\eta_y - 2L_{f}\eta_x\eta_z\right) \frac{1}{2\eta_x}\|x^{k+1}-x^k\|^2 \notag\\
    && - \frac{\delta}{2\eta_x}\|x^{k+1}-x^k\|^2 -\frac{1}{4\eta_y} \|y^{k+1}- y^k\|^2 - \frac{1}{2\eta_z}D_f(z^{k+1}; z^k) - D_{\myblue f}(\zs; z^{k+1}). 
\end{eqnarray}

It remains to show the following inequality:
\begin{eqnarray*}
    -\frac{\delta}{2\eta_x}\|x^{k+1}-x^k\|^2 &=& - \frac{\delta\eta_x}{2}\|\nabla {\mygreen g}(x^{k+1}) + \mathsf{K}^* (2y^{k+1} - y^k) + \nabla {\myblue f}(z^{k})\|^2  \\
    &\leq& - \frac{\delta\eta_x}{10}\|\mathsf{K}^*(y^{k+1}-\ys)\|^2 +  \frac{\delta\eta_x}{2}\|\mathsf{K}^*(y^{k+1}-y^k)\|^2 + \frac{\delta\eta_x}{2}\|\nabla{\mygreen g}(x^{k+1}) - \nabla{\mygreen g}(\xs)\|^2 \\
    &&+ \frac{\delta\eta_x}{2}\|\nabla{\myblue f}(z^{k+1}) - \nabla{\myblue f}(\zs)\|^2 + \frac{\delta\eta_x}{2}\|\nabla{\myblue f}(z^{k+1}) - \nabla{\myblue f}(z^k)\|^2,
\end{eqnarray*}
which is the same as inequality~\eqref{eq:proof_acv_i_conv_2_15}. 
Thus, setting $\delta = \min\left\{\frac{1}{2},\frac{1}{\eta_x(L_{\mygreen g}+L_{\myblue f})}\right\}$, we get
\begin{eqnarray}
    \label{eq:proof_acv_ii_conv_2_16}
    \Psi_{k+1} \leq \Psi_k - \min\left\{\frac{\mu_{\mygreen g}\eta_x}{2}, \frac{1}{20} \delta\lambda_{\min}(\mathsf{K}\mathsf{K}^*) \eta_x\eta_y, \frac{\eta_z}{2}\right\}\Psi_{k+1} &\Rightarrow& \Psi_{k+1} \leq \theta \Psi_k \leq \theta^{k+1}\Psi_0,
\end{eqnarray}
where the coefficient $\theta$ is defined as 
\begin{equation*}
    \theta = \max\left\{\frac{2}{2+\mu_{{\mygreen g}}\eta_x}, \frac{40}{40+\lambda_{\min}(\mathsf{K}\mathsf{K}^*)\eta_x\eta_y}, \frac{20}{20+\frac{\lambda_{\min}(\mathsf{K}\mathsf{K}^*)}{(L_{\mygreen g}+ L_{\myblue f})}\eta_y}, \frac{2}{2+\eta_z}\right\}.
\end{equation*}

\paragraph{Lyapunov analysis for \algname{APDTR-I}.} We define the Lyapunov function 
\begin{eqnarray*}
    \Psi_{k+1} &=& \frac{1}{2\eta_x}\|x^{k+1}-\xs\|^2 + \frac{1}{2\eta_y}\|y^{k+1}-\ys\|^2 + \frac{1}{\eta_z} D_{{\myblue f}}(z^{k+1};\zs)\\
    && -\la y^{k+1}-\ys, \mathsf{K}(x^{k+1}-\xs)\ra  + \la \nabla {\myblue f}(z^{k+1}) - \nabla{\myblue f}(\zs), x^{k+1} - \xs\ra.
\end{eqnarray*}
The rest of the proof is the same up to the cross terms as in the proof for \algname{ACV-I}. The difference is in  the application of Lemma~\ref{lem:descent_lemma_apdtr_i_x} (inequality \eqref{eq:descent_lemma_apdtr_i_x_2}), Lemma~\ref{lem:descent_lemma_acv_1_y} with $\mu_{\hstar}=0$ and $a_y = \nicefrac{1}{2}$ and Lemma~\ref{lem:descent_lemma_apge_z} with $a_z = \nicefrac{1}{2}$:
\begin{eqnarray}
    \Psi_{k+1} &\leq& 
    \frac{1}{2\eta_x}\|x^{k+1} - \xs\|^2  - \frac{\mu_{\mygreen g}}{2}\|x^{k+1} - \xs\|^2 - \frac{1}{2L_{\mygreen g}}\| \nabla {\mygreen g}(x^{k+1}) - \nabla {\mygreen g}(\xs)\|^2 - \frac{1}{2\eta_x}\|x^{k+1}-x^k\|^2 \notag\\
    && - \la \mathsf{K}^*(y^{k} - \ys), x^{k} - \xs \ra - \la \mathsf{K}^*(y^{k} - \ys), x^{k+1} - x^k \ra  - \la \nabla {\myblue f}(z^{k+1}) - \nabla {\myblue f}(z^k), x^{k+1} - \xs \ra\notag\\
    && + \frac{1}{2\eta_y}\|y^k -\ys\|^2 - \frac{1}{4\eta_y}\|y^{k+1}-y^k\|^2 + \|\mathsf{K}\|^2 \eta_x\eta_y \frac{1}{\eta_y}\|x^{k+1}-x^k\|^2 \notag\\
    && \frac{1}{\eta_z}D_{\myblue f}(z^{k};\zs) - D_{\myblue f}(z^{k+1};\zs) - D_{\myblue f}(\zs;z^{k+1}) - \frac{1}{2\eta_z}D_{\myblue f}(z^{k};z^{k+1})\notag\\
    && + \la y^{k}- \ys, \mathsf{K}(x^{k+1}-x^k)\ra + \la \nabla {\myblue f}(z^{k}) - \nabla {\myblue f}(\zs), x^{k} - \xs \ra + \la \nabla {\myblue f}(z^{k+1}) - \nabla {\myblue f}(z^k), x^{k+1} - \xs \ra \notag\\
    &=& \Psi_k - \frac{\mu_{{\mygreen g}}}{2}\|x^{k+1} - \xs \|^2   - D_{\myblue f}(z^{k+1};\zs) - \frac{1}{2L_{\mygreen g}}\|\nabla {\mygreen g}(x^{k+1}) - \nabla {\mygreen g}(\xs)\|^2  \notag\\
    && -\left(1-\delta  - 2\|\mathsf{K}\|^2\eta_x\eta_y - 2L_{f}\eta_x\eta_z\right) \frac{1}{2\eta_x}\|x^{k+1}-x^k\|^2 \notag\\
    && - \frac{\delta}{2\eta_x}\|x^{k+1}-x^k\|^2 -\frac{1}{4\eta_y} \|y^{k+1}- y^k\|^2 - \frac{1}{2\eta_z}D_f(z^{k+1}; z^k) - D_{\myblue f}(\zs; z^{k+1}). 
\end{eqnarray}
It remains to show the following inequality:
\begin{eqnarray*}
    -\frac{\delta}{2\eta_x}\|x^{k+1}-x^k\|^2 &=& - \frac{\delta\eta_x}{2}\|\nabla {\mygreen g}(x^{k+1}) + \mathsf{K}^* y^k + (2\nabla {\myblue f}(z^{k+1}) - \nabla {\myblue f}(z^{k}))\|^2  \\
    &\leq& - \frac{\delta\eta_x}{10}\|\mathsf{K}^*(y^{k+1}-\ys)\|^2 +  \frac{\delta\eta_x}{2}\|\mathsf{K}^*(y^{k+1}-y^k)\|^2 + \frac{\delta\eta_x}{2}\|\nabla{\mygreen g}(x^{k+1}) - \nabla{\mygreen g}(\xs)\|^2 \\
    &&+ \frac{\delta\eta_x}{2}\|\nabla{\myblue f}(z^{k+1}) - \nabla{\myblue f}(\zs)\|^2 + \frac{\delta\eta_x}{2}\|\nabla{\myblue f}(z^{k+1}) - \nabla{\myblue f}(z^k)\|^2,
\end{eqnarray*}
which is the same as inequality~\eqref{eq:proof_acv_i_conv_2_15}. 
Thus, setting $\delta = \min\left\{\frac{1}{2},\frac{1}{\eta_x(L_{\mygreen g}+L_{\myblue f})}\right\}$, we get
\begin{eqnarray}
    \label{eq:proof_apdtr_i_conv_2_16}
    \Psi_{k+1} \leq \Psi_k - \min\left\{\frac{\mu_{\mygreen g}\eta_x}{2}, \frac{1}{20} \delta\lambda_{\min}(\mathsf{K}\mathsf{K}^*) \eta_x\eta_y, \frac{\eta_z}{2}\right\}\Psi_{k+1} &\Rightarrow& \Psi_{k+1} \leq \theta \Psi_k \leq \theta^{k+1}\Psi_0,
\end{eqnarray}
where the coefficient $\theta$ is defined as 
\begin{equation*}
    \theta = \max\left\{\frac{2}{2+\mu_{{\mygreen g}}\eta_x}, \frac{40}{40+\lambda_{\min}(\mathsf{K}\mathsf{K}^*)\eta_x\eta_y}, \frac{20}{20+\frac{\lambda_{\min}(\mathsf{K}\mathsf{K}^*)}{(L_{\mygreen g}+ L_{\myblue f})}\eta_y}, \frac{2}{2+\eta_z}\right\}.
\end{equation*}

\paragraph{Lyapunov analysis for \algname{APDTR-II}.} We define the Lyapunov function 
\begin{eqnarray*}
    \Psi_{k+1} &=& \frac{1}{2\eta_x}\|x^{k+1}-\xs\|^2 + \frac{1}{2\eta_y}\|y^{k+1}-\ys\|^2 + \frac{1}{\eta_z} D_{{\myblue f}}(z^{k+1};\zs)\\
    && +\la y^{k+1}-\ys, \mathsf{K}(x^{k+1}-\xs)\ra  + \la \nabla {\myblue f}(z^{k+1}) - \nabla{\myblue f}(\zs), x^{k+1} - \xs\ra.
\end{eqnarray*}
The rest of the proof is the same up to the sign flip of the cross terms as in the proof for \algname{ACV-I}. The difference is in  the application of Lemma~\ref{lem:descent_lemma_apdtr_ii_x} (inequality \eqref{eq:descent_lemma_apdtr_ii_x_2}), Lemma~\ref{lem:descent_lemma_acv_2_y} with $\mu_{\hstar} =0$ and $a_y = \nicefrac{1}{2}$ and Lemma~\ref{lem:descent_lemma_apge_z} with $a_z = \nicefrac{1}{2}$:
\begin{eqnarray}
    \Psi_{k+1} &\leq& 
    \frac{1}{2\eta_x}\|x^{k+1} - \xs\|^2  - \frac{\mu_{\mygreen g}}{2}\|x^{k+1} - \xs\|^2 - \frac{1}{2L_{\mygreen g}}\| \nabla {\mygreen g}(x^{k+1}) - \nabla {\mygreen g}(\xs)\|^2 - \frac{1}{2\eta_x}\|x^{k+1}-x^k\|^2 \notag\\
    && - \la \mathsf{K}^*(y^{k+1} - y^k), x^{k+1} - \xs \ra - \la \nabla {\myblue f}(z^{k+1}) - \nabla {\myblue f}(z^k), x^{k+1} - \xs \ra  \notag\\
    && + \frac{1}{2\eta_y}\|y^k -\ys\|^2 - \frac{1}{4\eta_y}\|y^{k+1}-y^k\|^2 + \|\mathsf{K}\|^2 \eta_x\eta_y \frac{1}{\eta_y}\|x^{k+1}-x^k\|^2 \notag\\
    && + \la y^k - \ys, \mathsf{K}(x^k -\xs)\ra + \la y^{k+1}- y^k, \mathsf{K}(x^{k+1}-\xs)\ra \notag\\
    && \frac{1}{\eta_z}D_{\myblue f}(z^{k};\zs) - D_{\myblue f}(z^{k+1};\zs) - D_{\myblue f}(\zs;z^{k+1}) - \frac{1}{2\eta_z}D_{\myblue f}(z^{k};z^{k+1}) + L_{f}\eta_x\eta_z \frac{1}{\eta_x}\|x^{k+1}-x^k\|^2 \notag\\
    && + \la \nabla {\myblue f}(z^{k}) - \nabla {\myblue f}(\zs), x^{k} - \xs \ra + \la \nabla {\myblue f}(z^{k+1}) - \nabla {\myblue f}(z^k), x^{k+1} - \xs \ra \notag\\
    &=& \Psi_k - \frac{\mu_{{\mygreen g}}}{2}\|x^{k+1} - \xs \|^2   - D_{\myblue f}(z^{k+1};\zs) - \frac{1}{2L_{\mygreen g}}\|\nabla {\mygreen g}(x^{k+1}) - \nabla {\mygreen g}(\xs)\|^2  \notag\\
    && -\left(1-\delta  - 2\|\mathsf{K}\|^2\eta_x\eta_y - 2L_{f}\eta_x\eta_z\right) \frac{1}{2\eta_x}\|x^{k+1}-x^k\|^2 \notag\\
    && - \frac{\delta}{2\eta_x}\|x^{k+1}-x^k\|^2 -\frac{1}{4\eta_y} \|y^{k+1}- y^k\|^2 - \frac{1}{2\eta_z}D_f(z^{k+1}; z^k) - D_{\myblue f}(\zs; z^{k+1}). 
\end{eqnarray}
It remains  to show the following inequality:
\begin{eqnarray*}
    -\frac{\delta}{2\eta_x}\|x^{k+1}-x^k\|^2 &=& - \frac{\delta\eta_x}{2}\|\nabla {\mygreen g}(x^{k+1}) + \mathsf{K}^*(2y^{k+1} - y^k) + (2\nabla {\myblue f}(z^{k+1}) - \nabla {\myblue f}(z^{k}))\|^2  \\
    &\leq& - \frac{\delta\eta_x}{10}\|\mathsf{K}^*(y^{k+1}-\ys)\|^2 +  \frac{\delta\eta_x}{2}\|\mathsf{K}^*(y^{k+1}-y^k)\|^2 + \frac{\delta\eta_x}{2}\|\nabla{\mygreen g}(x^{k+1}) - \nabla{\mygreen g}(\xs)\|^2 \\
    &&+ \frac{\delta\eta_x}{2}\|\nabla{\myblue f}(z^{k+1}) - \nabla{\myblue f}(\zs)\|^2 + \frac{\delta\eta_x}{2}\|\nabla{\myblue f}(z^{k+1}) - \nabla{\myblue f}(z^k)\|^2,
\end{eqnarray*}
which is the same as inequality~\eqref{eq:proof_acv_i_conv_2_15}. 
Thus, setting $\delta = \min\left\{\frac{1}{2},\frac{1}{\eta_x(L_{\mygreen g}+L_{\myblue f})}\right\}$, we get
\begin{eqnarray}
    \label{eq:proof_apdtr_ii_conv_2_16}
    \Psi_{k+1} \leq \Psi_k - \min\left\{\frac{\mu_{\mygreen g}\eta_x}{2}, \frac{1}{20} \delta\lambda_{\min}(\mathsf{K}\mathsf{K}^*) \eta_x\eta_y, \frac{\eta_z}{2}\right\}\Psi_{k+1} &\Rightarrow& \Psi_{k+1} \leq \theta \Psi_k \leq \theta^{k+1}\Psi_0,
\end{eqnarray}
where the coefficient $\theta$ is defined as 
\begin{equation*}
    \theta = \max\left\{\frac{2}{2+\mu_{{\mygreen g}}\eta_x}, \frac{40}{40+\lambda_{\min}(\mathsf{K}\mathsf{K}^*)\eta_x\eta_y}, \frac{20}{20+\frac{\lambda_{\min}(\mathsf{K}\mathsf{K}^*)}{(L_{\mygreen g}+ L_{\myblue f})}\eta_y}, \frac{2}{2+\eta_z}\right\}.
\end{equation*}

\subsection{Proof of Corollary~\ref{cor:acv_and_apdtr_complexity_non_smooth}}
\label{sec:proof_of_cor_acv_and_apdtr_complexity_non_smooth}
In view of \eqref{eq:proof_acv_i_conv_2_16} and \eqref{eq:proof_acv_ii_conv_2_16}, \eqref{eq:proof_apdtr_i_conv_2_16} and \eqref{eq:proof_apdtr_ii_conv_2_16}, we have 
\begin{equation}
    k \geq \cO\left(\frac{1}{1-\theta}\log \frac{1}{\varepsilon}\right) \quad\Rightarrow\quad \Psi_k \leq \varepsilon \Psi_0.
\end{equation}
Set $\eta_y = \frac{1}{8\|\mathsf{K}\|^2 \eta_x}$, and $\eta_z= \frac{1}{8L_{\myblue f}\eta_x}$. Then by the definition of $\theta$, we have 
\begin{eqnarray*}
    \cO\left(\frac{1}{1-\theta}\log\frac{1}{\varepsilon}\right) 
    &=& \cO\left(\max\left\{1+\frac{1}{\mu_{{\mygreen g}}\eta_x}, 1+\frac{\|\mathsf{K}\|^2}{\lambda_{\min}(\mathsf{K}\mathsf{K}^*)}, 1+\frac{\eta_x\|\mathsf{K}\|^2(L_{\mygreen g}+L_{\myblue f})}{\lambda_{\min}(\mathsf{K}\mathsf{K}^*)}, 1+L_{\myblue f}\eta_x\right\}\log\frac{1}{\varepsilon}\right).
\end{eqnarray*}
Setting $\eta_x = \min\left\{ \frac{1}{\sqrt{L_{\myblue f}\mu_{\mygreen g}}}, \sqrt{\frac{\lambda_{\min}(\mathsf{K}\mathsf{K}^*)}{\|\mathsf{K}\|^2}}\cdot \frac{1}{\sqrt{(L_{\mygreen g}+L_{\myblue f})\mu_{\mygreen g}}}\right\} $, we have 
\begin{eqnarray*}
    \cO\left(\frac{1}{1-\theta}\log\frac{1}{\varepsilon}\right) &=& \cO\left(\max\left\{ 1+\sqrt{\frac{L_{\myblue f}}{\mu_{\mygreen g}}}, 1+\sqrt{\frac{L_{\myblue f} + L_{\mygreen g}}{\mu_{\mygreen g}}  \cdot \frac{\|\mathsf{K}\|^2}{\lambda_{\min}(\mathsf{K}\mathsf{K}^*)}}, 1+ \frac{\|\mathsf{K}\|^2}{\lambda_{\min}(\mathsf{K}\mathsf{K}^*)}\right\}\log\frac{1}{\varepsilon}\right)\\
    && \cO\left(\left(\sqrt{\frac{L_{\myblue f} + L_{\mygreen g}}{\mu_{\mygreen g}}} \cdot \frac{\|\mathsf{K}\|}{\sqrt{\lambda_{\min}(\mathsf{K}\mathsf{K}^*)}} + \frac{\|\mathsf{K}\|^2}{\lambda_{\min}(\mathsf{K}\mathsf{K}^*)}\right)\log\frac{1}{\varepsilon}\right).
\end{eqnarray*}

\end{document}